\newcommand\datver[1]{\def\datverp
{\par\boxed{\boxed{\text{Version: #1; Run: \today}}}}}
\renewcommand\datverp{~}
\newcommand\seq{ \, = \, }
\newcommand\ede{ \, := \, }
\newcommand\weight{\langle x \rangle}
\newcommand\ad{\operatorname{ad}}
\newcommand\pa{{\partial}}
\newcommand\adj{\operatorname{ad}}
\newcommand{\Cf}{\mathfrak{C}}
\newcommand{\Df}{\mathfrak{D}}
\newcommand{\Ef}{\mathfrak{E}}
\newcommand{\CC}{\mathbb C}
\newcommand{\RR}{\mathbb R}
\newcommand{\ZZ}{\mathbb Z}
\newcommand{\maC}{\mathcal C}
\newcommand{\maE}{\mathcal E}
\newcommand{\maF}{\mathcal F}
\newcommand{\maH}{\mathcal H}
\newcommand{\maK}{\mathcal K}
\newcommand{\maL}{\mathcal L}
\newcommand{\maP}{\mathcal P}
\newcommand{\maW}{\mathcal W}
\newtheorem{theorem}{Theorem}[section]
\newtheorem{proposition}[theorem]{Proposition}
\newtheorem{corollary}[theorem]{Corollary}
\newtheorem{lemma}[theorem]{Lemma}
\theoremstyle{definition}
\newtheorem{definition}[theorem]{Definition}
\theoremstyle{remark}
\newtheorem{remark}[theorem]{Remark}
\title[SABR]{Heat kernels, solvable
Lie groups, and the mean reverting SABR
stochastic volatility model}
\author{Siyan Zhang} \address{Pennsylvania State University,
  Mathematics Department, University Park, PA 16802, USA}
\email{zhang\_s@math.psu.edu}
\author{Anna L. Mazzucato}
\address{Pennsylvania State
    University, Mathematics Department, University Park, PA 16802, USA}
\email{alm24@psu.edu}
\author{Victor Nistor} \address{Universit\'e de
  Lorraine, UFR MIM, Ile du Saulcy, CS 50128, 57045 METZ Cedex 01,
  France and Pennsylvania State
    University, Mathematics Department, University Park, PA 16802, USA}
\email{victor.nistor@univ-lorraine.fr}
\date{\today}
\begin{document}

\begin{abstract}
We use commutator techniques and calculations in solvable Lie groups
to investigate certain evolution Partial
Differential Equations (PDEs
for short) that arise in the study of stochastic volatility models for
pricing contingent claims on risky assets.  In particular, by
restricting to domains of bounded volatility, we establish the
existence of the semi-groups generated by the spatial part of the
operators in these models, concentrating on those arising in the so-called
``SABR stochastic volatility model with mean reversion.''  The
main goal of this work is to approximate the solutions of the Cauchy
problem for the SABR PDE with mean reversion, a parabolic problem the
generator of which is denoted by $L$.  The fundamental solution for
this problem is not known in closed form. We obtain an approximate
solution by performing an expansion in the so-called {\em volvol} or
volatility of the volatility, which leads us to study a degenerate
elliptic operator $L_0$, corresponding the the zero-volvol case of the
SABR model with mean reversion, to which the classical results do not
apply. However, using Lie algebra techniques we are able to derive an
exact formula for the solution operator of the PDE \ $\pa_t u - L_0 u
= 0$.  We then compare the semi-group generated by $L$--the existence
of which does follows from standard arguments--to that generated by
$L_0$, thus establishing a perturbation result that is useful for
numerical methods for the SABR PDE with mean reversion. In the
process, we are led to study semigroups arising from both a strongly
parabolic and a hyperbolic problem.
\end{abstract}

\keywords{Degenerate parabolic equations, solvable Lie algebra,
semi-groups, fundamental solution, option pricing, SABR model, mean reversion}

\subjclass[2010]{35K65,47D03,22E60,91G80}

\maketitle

\tableofcontents

\section{Introduction}

We study certain parabolic partial differential equations (PDEs for
short) that arise in the study of stochastic volatility models for
pricing contingent claims on risky assets.  More specifically, we
consider the PDE
\begin{equation}\label{eq.def.lSABR}
	\pa_t u - L u \ede \pa_t u -
        \kappa(\theta-\sigma)\pa_{\sigma}u - \frac{\sigma^2}{2}\big [
          (\pa_x^2u-\pa_xu) - \nu\rho\pa_x\pa_{\sigma} u -
          \frac{\nu^2}{2}\pa_{\sigma}^2 u \big ] \,= \, 0 \,
\end{equation}
for the function $u(t,\sigma, x)$, where $t\geq 0$, $\sigma > 0$ and
$x \in \RR$.  This equation is a forward Kolmogorov equation for the
probability density function associated to a two-dimensional
stochastic process for the variables $\sigma$ and $x$. The parameter
$\theta>0$ represents the mean of the $\sigma$ process, $\kappa>0$ is
a parameter measuring the strength of the mean reversion, $\nu>0$ is
the variance of the $\sigma$ process, and $\rho$ measures the
correlation between the $x$ and the $\sigma$ processes. This PDE is
often called the {\em $\lambda$SABR PDE} and has recently received
attention in the literature due to its applications in pricing options
in mathematical finance and financial applications
\cite{Lesniewski02, Lesniewski15}, where it is used as an alternative
to the Black-Scholes PDE. In this context, the Green's function is
called the {\em pricing kernel of the economy}, $x$ represents
the price of an underlying risky asset such as a stock, and $\sigma$
is its volatility. Thus $\sigma$ itself follows a stochastic process,
hence the $\lambda$SABR model is a stochastic volatility model in which
  $\nu$ represents the volatility of the volatility or {\em volvol}.
Stochastic volatility models are known to perform better
in practice than the Black-Scholes model (see
e.g. \cite{Lesniewski02, Heston, HullWhite}).

Our method consists in the following decomposition of the operator $L$:
\begin{equation}\label{eq.decomp.L}
 L \seq A + \frac{\sigma^{2}}{2} B + \nu L_1 + \nu^{2} L_2\,,
\end{equation}
where
\begin{equation}\label{def.operators}
\begin{gathered}
  A \ede \kappa (\theta - \sigma) \pa_\sigma\,, \quad B \ede
  \pa_x^2-\pa_x \,,\\
  L_1 \ede \rho\sigma^2\pa_x\pa_\sigma \, , \ \ \mbox{ and } \ \
  L_2 \ede \frac12\sigma^2\pa_\sigma^2 \,,
\end{gathered}
\end{equation}
and then in  studying separately these operators and their combinations, based
on the commutator identities that they satisfy. We thus establish that
$L$, $A$, $B$, and
\begin{equation}\label{eq.def.L0}
 L_0 := A + \frac{\sigma^{2}}{2} B
\end{equation}
generate strongly continuous or $c_0$ semi-groups, provided that we
restrict to a domain of bounded volatility $\sigma \in I := (\alpha,
\beta)$, where $0 < \alpha < \theta < \beta < \infty$.  We stress that
$L_0$ is a degenerate operator, in the sense that the diffusion matrix
associated to $L_0$ is not full rank. Therefore, the existence of the
semigroup does not follows from standard arguments.

Throughout, if $T$ is a linear operator that generates a semigroup, we
shall denote such semigroup by the usual notation $e^{t T}$, $t\geq
0$.

We will obtain explicit formulas for the kernel of the semi-groups
generated by $A$, $B$, and $L_0$.  While we have no explicit formulas
for the kernel of $e^{tL}$, the solution operator of the PDE
\eqref{eq.def.lSABR} of interest in applications, we are nonetheless
able to estimate the difference $e^{tL}h -e^{tL_0}h$, provided
$h(\sigma,x)$ has enough regularity in $\sigma$. The function $h$
represents the initial data for the Cauchy problem associated to
\eqref{eq.def.lSABR}, and in the specific applications we have in
mind, it is actually an analytic, or even constant, function in
$\sigma$.

The semi-groups investigated in this paper will typically act on
exponentially weighted Sobolev spaces. The reason for considering
exponentially weighted spaces is that, in the applications of
interest, the initial data $h$ for \eqref{eq.def.lSABR} is of the form
$h(\sigma, x) := |e^{x} - K|_{+}$, where $|y|_{+} = (y)^{+} :=(y +
|y|)/2$ denotes the {\em positive part} of the number $y \in
\RR$. This particular type of initial data arises in pricing of
so-called European call options (we refer to \cite{Gatheral:xx,
  Shreve} for a more detailed discussion of options).  The practical
meaning of the initial condition $h$ is the {\em payoff} of the option
at maturity.  Similar initial conditions are used for other types of
options, such as American and Asian options. From a mathematical point
of view, the form of $h$ requires exponential weights and implies low
regularity of the initial data in the $x$ direction, but provides
analytic regularity in the $\sigma$ direction, which we indeed exploit
in our estimate of $e^{tL}h - e^{tL_0}h$ (see Equation
\eqref{eq.intro.error} below and the statement of one of our main
results, Theorem \ref{thm.estimate}).

The semi-groups generated by the operators $A$ and $B$, and $L$ can be
obtained using classical methods, since the operator $A$ gives rise to
a transport evolution equation, whereas $B$ and $L$ are uniformly
strongly elliptic. In particular, we show that $B$ and $L$ generate
{\em analytic semi-groups}.  However, as already mentioned, classical
methods do not apply to $L_0$, which is degenerate.
We will employ a
different strategy, which allows us to establish the generation of
$c_0$ semigroup by $L_0$ and obtain an explicit formula for its
kernel. The key observation is that the operators $A$ and
$\frac{\sigma^{2}}{2}B$ generate a solvable, finite-dimensional Lie
algebra.

Having an explicit formula is important in obtaining an accurate, yet
easily computable, approximation of the solution operator $e^{t L}$,
one of the main goals of this work. To this end, we derive an error
estimate of the form:
\begin{equation}\label{eq.intro.error}
 \|e^{tL} h - e^{tL_0} h\|_{L^2} \le C \nu \big ( \|\pa_\sigma
 h\|_{L^2} + \|h\|_{L^2} \big ) \,,
\end{equation}
for $\nu \in (0, 1]$ and with a constant $C$, possibly dependent on
$L$ and $\kappa$, but not on $h$ and $\nu$ (see Theorem
\ref{thm.estimate} for a complete statement). In the process, we also
establish several mapping properties for the semi-groups generated by
$L_0$ and $L$.  The method of proof is a perturbative argument based
on heat kernels estimates, following the method developed in
\cite{Wen1, Wen2}. This method extends the work on Henry-Labord\`ere
on heat kernel asymptotics \cite{Labordere07, LabordereBook}. A
similar method was developed by Pascucci and his collaborators
\cite{PascucciCEJM, Pascucci13}. Heat kernel asymptotics were employed
in this context also by Gatheral and his collaborators
\cite{Gatheral12a, Gatheral12b}.  See also \cite{choulli, Lorig15, Pascucci15,
  Nakagawa, Lenotre, FeehanPop3}. We also mention that fundamental solutions for degenerate equations
related to \ $\partial_t u - L_0 u=F$, but in the context of ultraparabolic
equations satisfying H\"ormader's  conditions for hypoellipticity, which does
not hold for $\partial_t -L_0$, have been studied by many authors, starting
with the seminal work of Kolmogorov \cite{K34} (see
\cite{FeehanPop2, FeehanPop1,
PascucciUltraparab,UltraparabPreprint} for some recent, relevant works).

The paper is organized as follows. In Section \ref{sec2} we review a
few needed facts on evolution equations and semi-groups of
operators. We also introduced the exponentially weighted spaces used
in this paper. In Section \ref{sec3}, we show that the operators $L$
and $B$, which are both strongly parabolic, generate analytic
semi-groups on weighted spaces, using the Lumer--Phillips theorem and
the results of Section \ref{sec2}. Section \ref{sec4} deals with the
semi-groups generated by $A$, which is of transport type, and $L_0$,
which is degenerate parabolic. An explicit formula for $e^{tL_0}$ is
obtained by combining the results for the operators $A$ and $B$, more
specifically by exploiting the commutator identities that $A$ and
$f(\sigma) B$ satisfy and Lie group ideas. The last section, Section
\ref{sec5}, contains some additional results: a more detailed
discussion of Lie group ideas in evolution equations and the proof of
the error estimate \eqref{eq.intro.error}.

\subsection*{Notation:}
We close this Introduction with some notation used throughout. By
$\|\cdot\|$ we denote the functional norm in a Banach space, while the
norm of finite-dimensional vectors in $\RR^n$ will be simply denoted
by $|\cdot |$. Lastly, by $(,)$ we mean either the pairing between a
Banach spaces and its dual, or the $L^2$ inner product, depending on
the context.

\subsection*{Acknowledgments:}
The first and second authors were partially supported by the US National
Science Foundation grant DMS 1312727. The third author was supported
by France {\em Agence Nationale de la Rech\'erche} ANR-14-CE25-0012-01
(SINGSTAR).

\section{One parameter semi-groups}
\label{sec2}

This section is devoted to survey general facts about abstract
evolution equations and semi-groups of operators. We also review
needed facts about the function spaces we employ, in particular {\em
  exponentially weighted Sobolev spaces}. As remarked in the
Introduction, these spaces are needed to handle initial conditions of
the form $h(\sigma, x) := |e^{x} - K|_{+}$, $(\sigma, x) \in (0,
\infty) \times \RR$.  Most of the results presented in this section
are known. We follow primarily,\cite{AmannBook, Lunardi, PazyBook}.

\subsection{Unbounded operators and $c_0$ semi-groups}

We begin by recalling the notion of a semi-group generated by a linear
operator.  Throughout, {\em $\maL(X)$ will denote the space of bounded
  linear operators on a Banach space $X$}, which is a Banach algebra
using the operator norm.

\begin{definition} %\label{def.c0}
Let $X$ be a Banach space. A {\em strongly continuous} or {\em $c_0$
  semi-group of operators on $X$} is a family of bounded operators
$S(t) : X \to X$, $t \ge 0$, satisfying:
\begin{enumerate}[(i)]   %[label=(\roman*)]
\item $S(t_1 + t_2) = S(t_1) S(t_2)$, for all $t_i \ge 0$,
\item $S(0)=I$, where $I$ represent the identity operator on $X$,
\item $\lim_{t \to 0} S(t)x = x$, for all $x \in X$, where the limit
  is taken with respect to the topology of $X$.
\end{enumerate}
\end{definition}

We recall that a function $T : [a, b] \to \maL(X)$ is {\em strongly
  continuous} if the map $[a, b] \ni t \to T(t) \xi \in X$ is
continuous for every $\xi \in X$.  It follows from the definition of a
$c_0$ semi-group and the Banach-Steinhaus theorem that, if $S(t)$ is a
$c_0$ semi-group of operators on $X$, then $S(t)$ is strongly
continuous in $t$, hence the name strongly continuous semigroups.
%It also follows that $S(0) = 1$, the identity
%operator of $X$.

We shall need also the notion of analytic semi-groups.
To this end, for a given $\delta>0$, we let
\begin{equation}\label{eq.def.Delta.delta}
 \Delta_{\delta} \ede \{\, z = r e^{\imath \theta}\,, \ - \delta <
 \theta < \delta ,\ r > 0 \,\}\,.
\end{equation}
%Also, for any Banach space $X$, let $\maL(X)$ denote the Banach
%algebra of bounded operators on $X$.

\begin{definition} %\label{def.analytic}
Let $X$ be a Banach space. An {\em analytic semi-group of operators on
  $X$} is a function $S : \Delta_{\delta} \cup \{0\} \to \maL(X)$,
$\delta > 0$, with the properties
 \begin{enumerate}[(i)] %[label=(\roman*)]
  \item $S$ is analytic in $\Delta_{\delta}$;
  \item $S(z_1 + z_2) = S(z_1) S(z_2)$, if $z_i \in \Delta_\delta \cup
    \{0\}$;
    \item $S(0)=I$, the identity operator on $X$;
  \item $\lim_{z \to 0} S(z)x = x$, for all $x \in X$.
 \end{enumerate}
\end{definition}

The limit $\lim_{z \to 0} S(z)x $ is computed for $z \in
\Delta_{\delta}$.  An analytic semi-group is, in particular,
a $c_0$ semi-group.

\begin{definition}\label{def.unbounded}
 Let $X$ be a normed space. A {\em (possibly unbounded) linear
   operator on $X$} is a linear map $T : D(T) \to X$, where $D(T)
 \subset X$ is a linear subspace, called the {\em domain} of $T$.  We
 say that $T$ is {\em closed} if its graph is closed.
\end{definition}

Unbounded linear operators arise naturally as the generators of $c_0$
semi-groups.

\begin{definition}\label{def.gen.sgr}
The {\em generator} $T$ of a $c_0$ semi-group $S(t)$ on $X$ is the
operator $T\xi := \lim_{t \searrow 0}\, t^{-1} \big (S(t) \xi - \xi
\big)$, with domain the set of vectors $\xi \in X$ for which the limit
exists.
%We also say that $T$ {\em generates $S(t)$}.
\end{definition}

It is known that the generator of a $c_0$ semi-group is closed and
densely defined. We next review  criteria for an {\em unbounded}
operator $T$ to generate a $c_0$ semi-group $S(t)$.  Then $u(t)
:= S(t) h$ is a (suitable) solution of $u' - Tu =0$, $u(0) = h$. A
useful criterion for $T$ to generate a $c_0$ semi-group is provided by
the Lumer-Phillips theorem, which we discuss next.  Since two $c_0$
semi-groups with the same generator coincide \cite{AmannBook,
  PazyBook}, we shall write $S(t) = e^{tT}$ for the semi-group
generated by $T$, if such a semi-group exists.

\subsection{Dissipativity}
In the following, $\Re(z) = \Re z$ will denote the real part of $z \in
\CC$.  Let $X$ be a Banach space and let $X^{*}$ denote its dual. If
$x \in X$, the Hahn-Banach theorem implies, in particular, that the
set
\begin{equation*}
 \maF(x) \ede \{ f \in X^{*}, f(x) = \|x\|^{2} = \|f\|^{2} \}
\end{equation*}
is not empty.

\begin{definition}\label{def.qd}
A (possibly unbounded) operator $T$ on a Banach space $X$ is called
{\em quasi-dissipative} if there exists $\mu \ge 0$ such that, for
every $x \in D(T)$, there exists an $f \in \maF(x) \subset X^{*}$ with
the property that and $\Re \big ( f(Tx - \mu x) \big ) \le 0$.
\end{definition}

This definition is simply saying that for some $\mu>0$, the operator
$Tx-\mu x$ is dissipative.

The {\em numerical range} of $T$, denoted $\mathfrak{N}(T)$, is the
set
\begin{equation}\label{eq.num.r}
 \mathfrak{N}(T) \ede \{\, f(Tx),\ \|x\| = 1, f \in \maF(x)\,\} \,.
\end{equation}
A quasi-dissipative operator $T$ is thus one that has the property
that
\begin{equation}\label{eq.r.d}
 \mathfrak{N}(T) \, \subset \, \{\, z \in \CC,\ \Re(z) \le \mu \, \}
 \seq \mu + \Delta_{\pi/2}^{c}\,
\end{equation}
with $\Delta_\delta$ defined in Equation \eqref{eq.def.Delta.delta}
and $\Delta_\delta^{c} := \CC \smallsetminus \Delta_\delta$ its
complement.

%We are interested in quasi-dissipative operators in view of the
% Lumer-Phillips theorem, which we now recall.

Quasi-dissipativity, together with some mild conditions on the
operator $T$ stated below, is sufficient for the generation of a $c_0$
semigroup, by the celebrated Lumer-Phillips theorem, which we now
recall for the benefit of the reader \cite{AmannBook, PazyBook}.

\begin{theorem}[Lumer-Phillips]\label{thm.LumerPhillips}
 Let $X$ be a Banach space and let $T$ be a densely defined,
 quasi-dissipative operator on $X$ such that $T-\lambda$ is invertible
 for $\lambda $ large.  Then $T$ generates a $c_0$ semi-group on $X$.
\end{theorem}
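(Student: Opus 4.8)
The plan is to deduce the statement from the classical Hille--Yosida generation theorem, for which it suffices to establish the resolvent estimate $\|(\lambda - T)^{-1}\| \le 1/\lambda$ on the whole half-line $\lambda > 0$. First I would reduce to the case of a genuinely dissipative operator. If $T$ is quasi-dissipative with constant $\mu \ge 0$, then $T_0 \ede T - \mu$ is dissipative; and once $T_0$ is shown to generate a contraction $c_0$ semi-group $S_0(t)$, the family $e^{\mu t} S_0(t)$ is a $c_0$ semi-group whose generator is $T$. Hence it is enough to treat the case $\mu = 0$, so that for every $x \in D(T)$ there exists $f \in \maF(x)$ with $\Re f(Tx) \le 0$.

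The core estimate is that $\|(\lambda - T)x\| \ge \lambda \|x\|$ for all $\lambda > 0$ and $x \in D(T)$. To see this, choose $f \in \maF(x)$ as above and compute
\begin{multline*}
 \lambda \|x\|^2 \seq \lambda\, \Re f(x) \seq \Re f\big((\lambda - T)x\big) + \Re f(Tx) \\
 \le \Re f\big((\lambda - T)x\big) \le \|f\|\,\|(\lambda - T)x\| \seq \|x\|\,\|(\lambda - T)x\|\,,
\end{multline*}
where I used $f(x) = \|x\|^2 = \|f\|^2$ and $\Re f(Tx) \le 0$; dividing by $\|x\|$ gives the claim. This estimate shows that $\lambda - T$ is injective with closed range for every $\lambda > 0$, and that wherever $\lambda - T$ is invertible the resolvent obeys $\|(\lambda - T)^{-1}\| \le 1/\lambda$.

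It remains to upgrade invertibility from large $\lambda$ (the hypothesis) to all $\lambda > 0$. Here I would run the standard connectedness argument on $\Lambda \ede \{\, \lambda > 0 : \lambda - T \text{ is invertible} \,\}$. The resolvent set is open, so $\Lambda$ is open; and the uniform bound $\|(\lambda - T)^{-1}\| \le 1/\lambda$ forces $\Lambda$ to be closed in $(0, \infty)$, since if $\lambda_n \to \lambda > 0$ with $\lambda_n \in \Lambda$ then the resolvents remain bounded near $1/\lambda$ and a Neumann series argument places $\lambda$ in $\rho(T)$ as well. As $\Lambda$ is nonempty (it contains the large $\lambda$ from the hypothesis), open, and closed in the connected set $(0, \infty)$, we conclude $\Lambda = (0, \infty)$. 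Consequently $(0, \infty) \subset \rho(T)$ with $\|(\lambda - T)^{-1}\| \le 1/\lambda$; invertibility of $\lambda - T$ also shows $T$ is closed, and $T$ is densely defined by hypothesis, so the Hille--Yosida theorem yields that $T$ generates a contraction $c_0$ semi-group, which completes the proof.

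The step I expect to be the main obstacle is the propagation of invertibility: the dissipativity estimate by itself provides only injectivity and the one-sided resolvent bound, and it takes the topological argument together with the single invertibility hypothesis to spread surjectivity of $\lambda - T$ across the entire half-line. Everything else is either routine bookkeeping with the duality set $\maF(x)$ or an appeal to the classical Hille--Yosida generation theorem.
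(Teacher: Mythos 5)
Your proof is correct and is precisely the classical Hille--Yosida argument for the Lumer--Phillips theorem that the paper itself does not reproduce but simply cites to \cite{AmannBook, PazyBook}. The only cosmetic point is that the closed-range claim for $\lambda - T$ uses the closedness of $T$, which you only extract from the invertibility hypothesis at the very end; stating that reduction first would make the logical order cleaner, but nothing is actually missing.
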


By strengthening the condition \eqref{eq.r.d}, we obtain the following
similar theorem that yields generators of {\em analytic
  semi-groups}. The proof of this theorem is contained in the proof of
Theorem 7.2.7 in \cite{PazyBook}.

\begin{theorem}\label{thm.analytic}
 Let $X$ be a Banach space and let $T$ be a densely defined operator on
 $X$ such that $\mathfrak{N}(T) \subset \mu + \Delta_{\vartheta}^{c}$
 for some $\mu \in \RR$ and some $\vartheta > \pi/2$. Assume also that
 $T-\lambda$ is invertible for $\lambda$ large. Then $T$ generates an
 analytic semi-group.
\end{theorem}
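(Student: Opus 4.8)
The plan is to reduce the statement to the sectorial resolvent estimate that characterizes generators of analytic semi-groups, following the proof of Theorem 7.2.7 in \cite{PazyBook}. First I would replace $T$ by $T - \mu$; this translates the numerical range by $-\mu$ and multiplies the resulting semi-group by $e^{\mu t}$, neither of which affects analyticity, so I may assume $\mu = 0$ and $\mathfrak{N}(T) \subset \Delta_{\vartheta}^{c}$ with $\vartheta > \pi/2$. Note that $T$ is automatically closed: the hypothesis that $T - \lambda$ is invertible for $\lambda$ large means $(T-\lambda)^{-1} \in \maL(X)$, so $T - \lambda$, and hence $T$, has closed graph. The goal is then to show that $\lambda - T$ is boundedly invertible for every $\lambda$ in a sector $\Delta_{\vartheta'}$ with $\pi/2 < \vartheta' < \vartheta$, together with the bound $\|(\lambda - T)^{-1}\| \le M/|\lambda|$ there.

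The essential lower bound comes directly from the numerical range. For $x \in D(T)$ with $\|x\| = 1$ and any $f \in \maF(x)$ one has $f\big((\lambda - T)x\big) = \lambda - f(Tx)$ with $f(Tx) \in \mathfrak{N}(T)$, whence $\|(\lambda - T)x\| \ge |f((\lambda - T)x)| \ge \operatorname{dist}(\lambda, \mathfrak{N}(T))$. Thus, for $\lambda \notin \overline{\mathfrak{N}(T)}$, the operator $\lambda - T$ is injective with closed range and satisfies $\|(\lambda - T)^{-1}\| \le \operatorname{dist}(\lambda, \mathfrak{N}(T))^{-1}$ on its range. To upgrade injectivity to genuine invertibility I would run a connectedness argument inside $\Delta_{\vartheta'}$: the set of $\lambda \in \Delta_{\vartheta'}$ for which $\lambda - T$ is invertible is open (invertibility is an open condition in $\maL(X)$) and, because of the uniform lower bound, also closed in $\Delta_{\vartheta'}$; since it contains all large positive real $\lambda$ by hypothesis and $\Delta_{\vartheta'}$ is connected, $\lambda - T$ is invertible throughout $\Delta_{\vartheta'}$.

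It remains to convert the distance bound into the required $M/|\lambda|$ decay, which is a purely geometric matter. Since $\mathfrak{N}(T) \subset \Delta_{\vartheta}^{c}$ and $\lambda \in \Delta_{\vartheta'}$ with $\vartheta' < \vartheta$, the angular gap $\vartheta - \vartheta' > 0$ gives $\operatorname{dist}(\lambda, \mathfrak{N}(T)) \ge |\lambda|\sin(\vartheta - \vartheta')$, so that $\|(\lambda - T)^{-1}\| \le M/|\lambda|$ on $\Delta_{\vartheta'}$ with $M = 1/\sin(\vartheta - \vartheta')$ and $\vartheta' > \pi/2$. This is exactly the sectorial resolvent condition, and the Dunford–Taylor representation $e^{tT} = \frac{1}{2\pi\imath}\int_{\Gamma} e^{t\lambda}(\lambda - T)^{-1}\,d\lambda$, with $\Gamma$ a contour entering the sector $\Delta_{\vartheta'}$, then defines the analytic semi-group and verifies its defining properties, as in \cite{PazyBook}.

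I expect the connectedness step to be the main obstacle. The numerical-range estimate alone yields only injectivity with closed range, and closing the gap to surjectivity genuinely needs both the invertibility hypothesis for large $\lambda$ (to seed the argument) and the uniform lower bound on $\Delta_{\vartheta'}$ (to keep the invertible set closed). By contrast, the reduction to $\mu = 0$, the distance estimate, and the passage to the contour integral are routine once the resolvent bound is in hand.
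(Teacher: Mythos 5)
Your proposal is correct and follows essentially the same route as the paper, which gives no independent argument but defers to the proof of Theorem 7.2.7 in \cite{PazyBook}; that proof rests precisely on the numerical-range lower bound $\|(\lambda-T)x\|\ge \operatorname{dist}(\lambda,\mathfrak{N}(T))\|x\|$, the open-and-closed (connectedness) argument seeded by invertibility for large real $\lambda$, the resulting sectorial estimate $\|(\lambda-T)^{-1}\|\le M/|\lambda|$, and the Dunford--Taylor contour integral. Your identification of the connectedness step as the point where the large-$\lambda$ invertibility hypothesis is genuinely needed is exactly right.
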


We note that the assumption that $T-\lambda$ be invertible in Theorem
\ref{thm.analytic} implies that $T$ is closed.  The theorem is
especially useful when $T$ is a uniformly strongly elliptic operator
(see Definition \ref{def.u.s.e}) in view of the following Lemma, the
proof of which is again contained in the proof of Theorem 7.2.7 in
\cite{PazyBook}. See also \cite{ArendtForm, LionsBook61, KatoPerturbation}.

\begin{lemma}\label{lemma.n.r}
 Let $P$ be an order $2m$ differential operator on some domain $\Omega
 \subset \RR^{n}$, regarded as an unbounded operator on $L^2(\Omega)$
 with domain $D(P) \subset H^{2m}(\Omega)$.  We assume that there
 exists $C>0$ such that
 \begin{equation*}
   \Re(Pv, v) \le - C^{-1} \|v\|_{H^m(\Omega)}  \ \ \mbox{ and } \ \
  |(Pv, v)| \le C \|v\|_{H^m(\Omega)}\,, \quad (\forall)\, v \in D(P)\,.
 \end{equation*}
 Then $\mathfrak{N}(P) \subset \Delta_{\vartheta}^{c}$ for some
 $\vartheta > \pi/2$.
\end{lemma}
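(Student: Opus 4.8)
The plan is to use the Hilbert-space structure of $L^2(\Omega)$ to make the numerical range completely explicit, and then to chain the two hypotheses into a single scale-invariant inequality controlling the argument of $(Pv,v)$. Throughout I read the right-hand sides of the two displayed estimates as $\|v\|_{H^m(\Omega)}^2$; the squares are forced by homogeneity, since $(Pv,v)$ is quadratic in $v$ whereas $\|v\|_{H^m(\Omega)}$ is homogeneous of degree one.

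First I would compute $\maF(v)$ explicitly. For $v \in L^2(\Omega)$ with $\|v\| = 1$, a functional $f \in \maF(v)$ satisfies $f(v) = \|v\|^2 = \|f\|^2 = 1$; by the Riesz representation theorem $f = (\,\cdot\,, w)$ for some $w$ with $\|w\| = \|f\| = 1$, and the equality case of the Cauchy--Schwarz inequality applied to $(v,w) = 1 = \|v\|\,\|w\|$ forces $w = v$. Hence $\maF(v) = \{(\,\cdot\,,v)\}$ is a singleton, and $\mathfrak{N}(P) = \{\,(Pv,v) : v \in D(P),\ \|v\| = 1\,\}$. It therefore suffices to locate the complex number $z := (Pv,v)$ for normalized $v \in D(P)$.

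Next I would extract the sign and size of $z$ from the two estimates. The dissipativity bound gives $\Re z \le -C^{-1}\|v\|_{H^m}^2$; since $\|v\|_{H^m} \ge \|v\|_{L^2} = 1 > 0$, this yields $\Re z < 0$, so $z$ lies in the open left half-plane, together with the rearrangement $\|v\|_{H^m}^2 \le C\,|\Re z|$. Substituting this into the continuity bound $|z| \le C\|v\|_{H^m}^2$ produces the crucial estimate $|z| \le C^2\,|\Re z|$, in which the $H^m$-norm has been eliminated. Writing $z = |z|e^{\imath\theta}$, this is exactly $|\cos\theta| \ge C^{-2} > 0$, and since $\Re z < 0$ we get $\cos\theta \le -C^{-2}$.

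This last inequality confines $\theta$ to the closed sector about the negative real axis of half-angle $\gamma := \arccos(C^{-2}) < \pi/2$, which is precisely $\Delta_\vartheta^{c}$ with $\vartheta := \pi - \gamma = \pi/2 + \arcsin(C^{-2}) > \pi/2$. As this holds for every normalized $v \in D(P)$, we conclude $\mathfrak{N}(P) \subset \Delta_\vartheta^{c}$, as claimed. I expect the only genuinely delicate point to be the step that couples the two hypotheses, namely combining them \emph{multiplicatively} so as to cancel $\|v\|_{H^m}$ and arrive at the homogeneous inequality $|z| \le C^2|\Re z|$; once this is in hand, both the identification of the numerical range and the passage to a sector of half-angle strictly less than $\pi/2$ about the negative real axis are elementary.
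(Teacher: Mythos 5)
Your proof is correct, and it is essentially the argument the paper itself defers to: the text gives no proof of this lemma, citing instead the proof of Theorem 7.2.7 in Pazy, which runs exactly as you do --- compute $\mathfrak{N}(P)=\{(Pv,v):\ v\in D(P),\ \|v\|=1\}$ via Riesz representation, then combine the G\aa rding-type bound and the form-boundedness multiplicatively to get the homogeneous inequality $|(Pv,v)|\le C^{2}|\Re (Pv,v)|$ with $\Re(Pv,v)<0$, which is sectoriality with $\vartheta=\pi/2+\arcsin(C^{-2})>\pi/2$. Your reading of the hypotheses with $\|v\|_{H^m(\Omega)}^{2}$ on the right-hand sides is the intended one (the missing squares in the statement are a typo, as homogeneity forces), and this is consistent with how the lemma is invoked later via Corollaries \ref{cor.cont} and \ref{cor.Garding}.
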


From Theorem \ref{thm.analytic} and Lemma \ref{lemma.n.r}, we get the
following corollary.

\begin{corollary}\label{cor.analytic}
 Let $P$ be as in Lemma \ref{lemma.n.r} and assume that $D(P)$ is
 dense in $L^{2}(\Omega)$ and that $P-\lambda$ is invertible for
 $\lambda$ large. Then $P$ generates an analytic semi-group on $X$.
\end{corollary}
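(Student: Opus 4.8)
The plan is to combine Lemma \ref{lemma.n.r} with Theorem \ref{thm.analytic} directly, working throughout on the space $X = L^2(\Omega)$ on which $P$ acts. First I would invoke Lemma \ref{lemma.n.r}: by hypothesis $P$ is an order $2m$ differential operator on $\Omega$, regarded as an unbounded operator on $L^2(\Omega)$ with domain $D(P) \subset H^{2m}(\Omega)$, and the two inequalities required by that lemma—the coercivity estimate $\Re(Pv,v) \le -C^{-1}\|v\|_{H^m(\Omega)}$ together with the bound $|(Pv,v)| \le C\|v\|_{H^m(\Omega)}$—are exactly the standing assumptions on $P$ inherited from the statement of the lemma. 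Its conclusion is that the numerical range obeys $\mathfrak{N}(P) \subset \Delta_{\vartheta}^{c}$ for some angle $\vartheta > \pi/2$.

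Next I would observe that this containment is precisely the numerical-range hypothesis of Theorem \ref{thm.analytic} in the special case $\mu = 0$, since $\Delta_{\vartheta}^{c} = 0 + \Delta_{\vartheta}^{c}$. The two remaining hypotheses of Theorem \ref{thm.analytic}—that $P$ be densely defined and that $P - \lambda$ be invertible for $\lambda$ large—are assumed outright in the statement of the corollary. Consequently all the hypotheses of Theorem \ref{thm.analytic} hold with $T = P$, $X = L^2(\Omega)$, and $\mu = 0$, and the theorem gives that $P$ generates an analytic semi-group on $L^2(\Omega)$.

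There is no real obstacle here: the corollary is a routine concatenation of the preceding lemma and theorem, and the only substantive points to verify are bookkeeping—matching the two operator inequalities to the hypotheses of Lemma \ref{lemma.n.r}, and recognizing $\mathfrak{N}(P) \subset \Delta_{\vartheta}^{c}$ as the $\mu = 0$ instance of the numerical-range condition in Theorem \ref{thm.analytic}. The genuine analytic content has already been discharged in proving those two results; I would therefore keep the proof of the corollary to a single short paragraph. (The reference to a generic Banach space $X$ in the conclusion is to be read as $X = L^2(\Omega)$, the space on which $P$ is defined.)
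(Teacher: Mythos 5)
Your proposal is correct and follows exactly the route the paper intends: the paper states the corollary as an immediate consequence of Lemma \ref{lemma.n.r} and Theorem \ref{thm.analytic}, and your argument simply spells out that concatenation (numerical range containment from the lemma, then density and invertibility of $P-\lambda$ to apply the theorem with $\mu=0$). Nothing is missing.
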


\subsection{Classical and other types of solutions}
Let us consider the initial-value problem for abstract parabolic
equations of the form
\begin{equation}\label{eq.evolution.form}
 \pa_t u - P u = F\,, \quad u(0) = h\in X \,,
\end{equation}
where $P$ is a (usually unbounded) operator on a Banach space $X$
and with domain $D(P)$. In our applications, $X$ will be a space of
functions on $\Omega$, but first
we consider this equation abstractly, from the point of view of
semi-groups of operators.

\begin{definition}\label{def.strong.s}
We shall say that a function $u : [0,T]  \to X$ is a
  {\em strong solution} of the initial value problem
  \eqref{eq.evolution.form} for $F \in \maC([0, T]; X)$ if
\begin{enumerate}[(i)] %[label=(\roman*)]
\item $u$ is continuous for the norm topology on $X$ and $u(0)=h$;
\item $\pa_t u = u'$ is defined and continuous as a function $(0,T] \to X$;
\item $u(t) \in D(P)$ for $t \in (0,T]$; and
\item $u$ satisfies the equation $\pa_t u(t) - P u(t) = F(t) \in X$,
  for $t \in (0, T]$.
\end{enumerate}
\end{definition}

We shall need also the following weaker form of a solution.

\begin{definition} \label{def.mild.s}
A function $u: [0,T]\to X$ is called a {\em mild solution} of the
  initial-value problem \eqref{eq.evolution.form} if $h\in X$, $F\in
  L^1([0,T],X)$, and
\begin{equation*}
    u(t) \, = \, e^{t P} h +\int_0^t e^{(t-\tau) P}\, F(\tau)\,d\tau,
\end{equation*}
with equality as elements of $X$ pointwise in time $t \in (0, T)$.
\end{definition}

The following remark recalls the connection between semi-groups and the various
types of solutions of the Initial Value Problem \eqref{eq.evolution.form}.

\begin{remark}\label{rem.solutions}
For the applications of interest in this work, we can reduce to homogeneous
equations, that is $F(0)=0$, as we assume now. We also assume
that the operator $P$ generates a $c_0$ semi-group $e^{tP}$ on $X$. Then $u(t)
:= e^{tP} h$ is a mild solution for any $h \in X$. If, moreover, $h \in D(P)$
or if $P$ generates an analytic semi-group, then $u (t) := e^{tP} h$ is also a
strong solution of Equation \eqref{eq.evolution.form} (see
\cite{AmannBook, Lunardi, PazyBook}, for instance).
\end{remark}

We are interested in the case when $P$ is a $m$-th order partial differential operator
defined on a domain $\Omega \subset \RR^d$:
\begin{equation}\label{eq.form.P}
P \ede \sum_{|\alpha| \le m} a_{\alpha} \pa^{\alpha}\,,
\end{equation}
with coefficients $a_\alpha \in C^\infty (\overline{\Omega})$. We
shall occasionally use the convenient notation:
\begin{equation*}
    u(t)(q) \ede u(t,q) \,, \quad t \ge 0\ \mbox{ and } \ q \in
    \Omega\,,
\end{equation*}
which is in agreement with \eqref{eq.evolution.form}. When $P = L$,
acting on $L^2(\Omega)$, $\Omega = (0, \infty) \times \RR$, $F = 0$,
and $h(\sigma, x) := |e^{x} - K|_{+}$, we recover the initial-value
problem \eqref{eq.def.lSABR}. In that case, we are interested in
{\em classical} and {\em weak} solutions. We assume that $X$
is a space of functions on $X$, that is, $X \subset L^{1}_{loc}(\Omega)$.
We also assume that the domain of $P$ contains the space of smooth
functions with compact support in $\Omega$, and hence the same is satisfied
by its adjoint.

\begin{definition}\label{def.classical.s}
We shall say that a function $u : [0,T] \times \Omega \to \CC$ is a
  {\em classical solution} of the initial value problem
  \eqref{eq.evolution.form} if
\begin{enumerate}[(i)] %[label=(\roman*)]
\item $u$ is continuous on $[0,T] \times \Omega$ and $u(0,q)=h(q)$, for
  all $q\in \Omega$;
\item $\pa_t u = u'$ and $\pa^{\alpha}u$, $|\alpha|\leq m$, are
  defined and continuous on $(0,T]\times \Omega$; and
\item $u$ satisfies the equation $\pa_t u- P u = F$ pointwise in $(0,
  T] \times \Omega$.
\end{enumerate}
If boundary conditions for $u$ on $\pa \Omega$ are given, we require
them to be satisfied as equalities of continuous functions.
\end{definition}

It follows that if $u$ is a classical solution, then $F$ is continuous.
We note that in the abstract setting, strong solutions are often referred to
as classical solutions (see e.g. \cite{PazyBook}.

%Let us recall the following general properties of analytic
%semi-groups.
%
\begin{remark}\label{rem.a.est}
 We recall that, if $T$ is the generator of an analytic semi-group
 $e^{tT}$ on a Banach space $X$, then $T^{n}e^{tT}$ extends to a
 bounded operator on $X$ and there exists $C > 0$ such that
\begin{equation}\label{eq.regularity}
 \|T^{n}e^{tT}\| \, \le \, Ct^{-n} \,, \ \ \mbox{ for all } \ t \in
 (0, 1]\,.
\end{equation}
\end{remark}

The following lemma follows from  known results  (cf.
\cite{Lunardi,PazyBook}).

\begin{lemma}\label{lemma.classical.s}
 Assume that there exists $n \ge 0$ such that $D(P^{n}) \ni f \to
\pa^{\alpha} f \in
 \maC (\overline{\Omega})$ is continuous for all $|\alpha| \le m$.
 In addition, assume that $P$ generates a $c_0$ semi-group on $X$
 and that $F = 0$.
 Then $u(t) := e^{tP}h$ is a classical solution of Equation \eqref{eq.evolution.form}
 for all $h \in D(P^{n+1})$.
\end{lemma}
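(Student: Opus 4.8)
The plan is to transfer the abstract regularity of the orbit $t \mapsto e^{tP}h$, which is guaranteed by $c_0$ semi-group theory, to genuine pointwise regularity in $(t,q)$ by routing every limit through the continuous embedding $D(P^{n}) \hookrightarrow \maC(\overline{\Omega})$ supplied by the hypothesis. First I would work on the Banach space $X_{n} \ede D(P^{n})$ equipped with its graph norm. It is standard that $e^{tP}$ restricts to a $c_0$ semi-group on $X_{n}$ whose generator is the part $P_{n}$ of $P$ in $X_{n}$, with domain $D(P_{n}) \seq \{ f \in D(P^{n}) : Pf \in D(P^{n})\} \seq D(P^{n+1})$. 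Since by assumption $h \in D(P^{n+1}) \seq D(P_{n})$, the orbit $u(t) \ede e^{tP}h$ is continuously differentiable ($\maC^{1}$) as a map $[0,\infty) \to X_{n}$, with $u(0) = h$ and $u'(t) \seq P u(t) \seq e^{tP}Ph$; note $Ph \in D(P^{n}) = X_{n}$, so $u'(t)$ also takes values in $X_{n}$.

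Next I would compose with the bounded linear maps $E_{\alpha} : X_{n} \to \maC(\overline{\Omega})$, $E_{\alpha}f \ede \pa^{\alpha}f$, for $|\alpha| \le m$, which exist and are continuous by hypothesis. Continuity of $t \mapsto u(t) \in X_{n}$ then yields continuity of $t \mapsto \pa^{\alpha}u(t) \in \maC(\overline{\Omega})$ for each $|\alpha| \le m$; combined with the continuity of each $\pa^{\alpha}u(t)$ in $q$, an elementary $\epsilon/2$ argument gives joint continuity of $(t,q) \mapsto \pa^{\alpha}u(t)(q)$ on $[0,T] \times \overline{\Omega}$. Taking $\alpha = 0$ and $t = 0$ settles property (i) of Definition \ref{def.classical.s} (with $h$ identified with its continuous representative), and the remaining $\alpha$ settle the spatial part of (ii).

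For the time derivative I would apply the evaluation functional $\mathrm{ev}_{q}$ at a fixed $q \in \Omega$, which is bounded on $\maC(\overline{\Omega})$, to the $\maC^{1}$ map $t \mapsto E_{0}u(t) = u(t) \in \maC(\overline{\Omega})$. This shows that for each fixed $q$ the scalar function $t \mapsto u(t)(q)$ is $\maC^{1}$ with $\pa_{t}u(t)(q) = u'(t)(q) = (e^{tP}Ph)(q)$; since $Ph \in X_{n}$, the right-hand side is jointly continuous in $(t,q)$ by the previous step, completing (ii). Finally, because $u(t) \in D(P^{n}) \subset D(P)$ and all $\pa^{\alpha}u(t)$, $|\alpha| \le m$, are genuine continuous functions, the distributional action of $P$ on $u(t)$ coincides with the classical one, so the abstract element $Pu(t) = u'(t) \in X$ is represented by the continuous function $q \mapsto \sum_{|\alpha| \le m} a_{\alpha}(q)\,\pa^{\alpha}u(t)(q)$. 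Combining with $\pa_{t}u(t)(q) = u'(t)(q)$ gives $\pa_{t}u - Pu = 0 = F$ pointwise on $(0,T] \times \Omega$, which is (iii).

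The main obstacle is precisely the reconciliation in this last step: a priori $u'(t)$ and $Pu(t)$ are only abstract elements of $X \subset L^{1}_{loc}(\Omega)$, so the pointwise identities $\pa_{t}u(t)(q) = u'(t)(q)$ and $(Pu(t))(q) = \sum_{\alpha} a_{\alpha}(q)\pa^{\alpha}u(t)(q)$ require that evaluation at $q$ commute with the abstract limits defining the $X$-derivative and the action of $P$. This is exactly what the embedding hypothesis $D(P^{n}) \hookrightarrow \maC(\overline{\Omega})$ supplies, promoting convergence in $X$ to uniform convergence of continuous representatives and forcing distributional derivatives to be classical; the care needed is to push every abstract limit through the continuous functionals $E_{\alpha}$ and $\mathrm{ev}_{q}$ rather than attempting to evaluate in $X$ directly. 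The loss of exactly one order of domain (needing $D(P^{n+1})$ rather than $D(P^{n})$) is what keeps $u'(t) = e^{tP}Ph$ inside $X_{n}$, and hence continuous.
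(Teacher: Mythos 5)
Your proof is correct and follows essentially the same route as the paper's: both transfer the abstract $\maC^{1}$ regularity of the orbit in $D(P^{n})$ (available precisely because $h \in D(P^{n+1})$, so the difference quotients converge in the $D(P^{n})$-norm) to pointwise regularity via the continuous maps $D(P^{n}) \ni f \mapsto \pa^{\alpha} f \in \maC(\overline{\Omega})$. Your version simply spells out the details that the paper's sketch leaves implicit (restriction of the semi-group to the graph-normed space $D(P^{n})$, composition with the embeddings and evaluation functionals, and the $\epsilon/2$ joint-continuity argument).
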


\begin{proof}
 For each fixed $t$, $u(t) \in D(P^{n+1})$ defines a continuous
 function on $\overline{\Omega}$, since $D(P^{n}) \subset \maC
 (\overline{\Omega})$ continuously.
 The same argument shows that the map $[0, T] \ni t \to u(t) \in \maC
(\overline{\Omega})$
 is continuous, and hence $u$ is continuous on $[0, T] \times \Omega$, that
$\pa_t u$, $\pa^{\alpha}u$, $|\alpha|\leq m$, are
  defined and continuous on $(0,T]\times \Omega$, and that $u' = Pu$ (this is
where we need the stronger
  assumption that $h \in D(P^{n+1})$, since we need $\epsilon^{-1}(u(t+\epsilon) - u(t)) \to
  Pu(t) \in D(P^{n})$, as $\epsilon \to 0$).
\end{proof}

Let us denote by
\begin{equation}\label{eq.def.transpose}
  P^tv := \sum_{|\alpha| \le m} (-1)^{|\alpha|} \pa^{\alpha}
  (a_{\alpha}v)
\end{equation}
be the {\em transpose} of $P$ (so that $\int_{\Omega} (Pu)v dx=
\int_{\Omega} u(P^t v) dx$ whenever $u$ and $v$ are compactly
supported in $\Omega$). Similarly, we then have the following
definition of {\em weak} or distributional solutions.

\begin{definition}\label{def.weak.s}
We shall say that $u : [0,T)\times \Omega \to \CC$ is a
  {\em weak solution} of the initial value problem
  \eqref{eq.evolution.form} if $u, F \in L^{1}_{loc}([0,T) \times
    \Omega)$ and, for all $\phi\in \maC_c^\infty([0,T)\times \Omega)$,
\begin{equation}\label{eq.weak.s}
 \int_{\Omega} \, \Big [ \, \phi(0, x)h(x) \, + \, \int_0^T \, \big (
   \pa_t \phi \, + \, P^t \phi \big )\, u \, dt \, \, + \, \int_0^T \,
   \phi F \, dt \, \Big ] \, dx \, = \, 0 \,.
\end{equation}
If, moreover, $v$ is also a classical solution on $[\delta, T]$
for all $\delta > 0$, we shall say that $v$ is a {\em classical} solution
on $(0, T]$. If $v$ is a classical solution on $(0, T]$, for all
$T< R$, then we say that $v$ is a classical solution on $(0, R)$.
\end{definition}

Again, the following lemma is well-known (see e.g. \cite{PazyBook}).

\begin{lemma}\label{lemma.weak}
 Assume that $P$ generates a $c_0$ semi-group on $X$. Then $u(t) := e^{tP}h$ is
a weak solution of the homogeneous Initial-Value
Problem \eqref{eq.evolution.form} with $F=0$ for all $h \in X$.
\end{lemma}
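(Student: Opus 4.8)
The plan is to prove Lemma~\ref{lemma.weak}: if $P$ generates a $c_0$ semi-group on $X$, then $u(t) := e^{tP}h$ is a weak solution of the homogeneous problem for every $h \in X$. The key point is that the class of mild solutions coincides with weak solutions when $F=0$, and the definition of weak solution (Equation~\eqref{eq.weak.s}) only requires testing against $\phi \in \maC_c^\infty([0,T) \times \Omega)$. The strategy is a density argument: verify the identity first for smooth initial data $h \in D(P)$ (where $u$ is an honest strong solution and all manipulations are justified by integration by parts), and then pass to general $h \in X$ by density and boundedness of the semi-group on compact time intervals.

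Let me give the main steps in order. First I would fix $\phi \in \maC_c^\infty([0,T) \times \Omega)$ and assume $h \in D(P)$, so that by Remark~\ref{rem.solutions} the function $u(t) = e^{tP}h$ is a strong solution: it is continuous into $X$, differentiable into $X$ on $(0,T]$ with $u'(t) = Pu(t) = e^{tP}Ph$, and $u(t) \in D(P)$ for each $t$. Next I would compute
\begin{equation*}
 \int_0^T \int_\Omega \big(\pa_t \phi + P^t\phi\big)\, u\, dx\, dt
 \seq \int_0^T \int_\Omega \pa_t\phi\, u\, dx\, dt
 \, + \, \int_0^T \int_\Omega (P^t\phi)\, u\, dx\, dt \,.
\end{equation*}
For the first term I would integrate by parts in $t$; since $\phi(T,\cdot)=0$ (because $\phi$ is compactly supported in $[0,T)$) the boundary term at $t=T$ vanishes, leaving a boundary contribution $-\int_\Omega \phi(0,x) h(x)\, dx$ at $t=0$ together with $-\int_0^T\int_\Omega \phi\, \pa_t u\, dx\, dt$. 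For the second term I would use the defining property of the transpose $P^t$, namely $\int_\Omega (P^t\phi)\, u\, dx = \int_\Omega \phi\,(Pu)\, dx$, which is legitimate because $\phi$ is compactly supported and smooth and $u(t) \in D(P)$ lies in the right space (here one uses the assumption, stated just before Definition~\ref{def.classical.s}, that $D(P)$ and $D(P^t)$ contain $\maC_c^\infty(\Omega)$). Adding the two pieces, the terms $-\int\!\int \phi\, \pa_t u$ and $+\int\!\int \phi\, Pu$ cancel since $\pa_t u = Pu$, and one is left with exactly $-\int_\Omega \phi(0,x) h(x)\, dx$, which rearranges to Equation~\eqref{eq.weak.s} with $F=0$. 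This establishes the weak formulation for $h \in D(P)$.

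Finally I would remove the restriction $h \in D(P)$ by density. Since $D(P)$ is dense in $X$ (the generator of a $c_0$ semi-group is densely defined, as recalled after Definition~\ref{def.gen.sgr}) and $e^{tP}$ is uniformly bounded on the compact interval $[0,T]$, we may choose $h_n \in D(P)$ with $h_n \to h$ in $X$ and observe that $u_n(t) := e^{tP}h_n \to u(t)$ in $X$ uniformly for $t \in [0,T]$. Each term in the weak identity \eqref{eq.weak.s} is a continuous linear functional of $u$ against the fixed test function $\phi$: the time integral $\int_0^T \int_\Omega (\pa_t\phi + P^t\phi)\, u\, dx\, dt$ converges because $\pa_t\phi + P^t\phi$ is a fixed compactly supported smooth function and $u_n \to u$ uniformly in $X$, while the boundary term $\int_\Omega \phi(0,x) h_n(x)\, dx \to \int_\Omega \phi(0,x) h(x)\, dx$ by $h_n \to h$. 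Passing to the limit preserves the identity, so \eqref{eq.weak.s} holds for all $h \in X$.

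The main obstacle is justifying the pairings rigorously when $X$ is merely a Banach space of functions rather than $L^2$: the integration-by-parts step requires that the distributional pairing $\int_\Omega (P^t\phi)\, u\, dx$ make sense and equal $\int_\Omega \phi\,(Pu)\, dx$ for $u(t) \in D(P) \subset L^1_{loc}(\Omega)$, which is precisely what the standing assumption ($\maC_c^\infty(\Omega)$ lies in the domains of both $P$ and its adjoint) is designed to supply. One must also be slightly careful that the exchange of the order of integration (Fubini) and the convergence $u_n \to u$ are controlled against the fixed test function $\phi$ whose support is compact in both $t$ and $q$; with uniform boundedness of $e^{tP}$ on $[0,T]$ these are routine, so no genuine difficulty beyond bookkeeping remains.
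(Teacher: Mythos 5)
Your proposal is correct and follows essentially the same route as the paper: verify the weak identity for $h \in D(P)$ (the paper does this by applying the fundamental theorem of calculus to $\psi(t) := (e^{tP}h, \phi(t))$, which is exactly your integration by parts in $t$ combined with moving $P$ onto the test function via $P^t$), then extend to all $h \in X$ by density of $D(P)$ and continuity of the weak form in $h$. No substantive difference.
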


\begin{proof}
We assume that $h \in D(P)$ and that $\phi$ is as in Definition
 \ref{def.weak.s}. Then the function $\psi(t) := (e^{tP}h, \phi(t))$
 is continuously differentiable on $[0, T]$. The relation
 $\psi(T) - \psi(0) = \int_{0}^{T} \psi'(t)dt$ gives that $u(t) := e^{tP}h$
 is a weak solution of the IVP \eqref{eq.evolution.form}. Since
 the weak form \eqref{eq.weak.s} depends continuously on $h$, we obtain that
 $u(t) := e^{tP}h$ is a weak solution of \eqref{eq.evolution.form}
 by the density of $D(P)$ in $X$.
\end{proof}

Combining the two lemmas above we obtain.

\begin{proposition}\label{prop.classical.solution}
 Assume that $D(P^{n}) \ni f \to \pa^{\alpha} f \in
 \maC (\overline{\Omega})$ is continuous for all $|\alpha| \le m$.
 Assume in addition that $P$ generates an analytic semi-group on $X$
 and that $F = 0$. Then, for all $h \in X$,  $u(t) := e^{tP}h$ is a classical
solution on $(0, \infty)$ of
 the IVP \eqref{eq.evolution.form}.
\end{proposition}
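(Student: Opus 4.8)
The plan is to reduce the general case $h \in X$ to the already-settled case $h \in D(P^{n+1})$ of Lemma \ref{lemma.classical.s}, by exploiting the instantaneous smoothing of analytic semi-groups together with a translation in time. Since an analytic semi-group is in particular a $c_0$ semi-group, Lemma \ref{lemma.weak} gives immediately that $u(t) \ede e^{tP}h$ is a weak solution of \eqref{eq.evolution.form} with $F = 0$, for every $h \in X$. This disposes of the distributional identity \eqref{eq.weak.s}, which encodes the initial datum $h$ at $t = 0$ and is one of the two ingredients required for $u$ to be a classical solution on $(0, \infty)$ in the sense of Definition \ref{def.weak.s}.

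It remains to check that $u$ is a classical solution on each interval $[\delta, T]$, for arbitrary $0 < \delta < T < \infty$. The key point is the regularizing property recorded in Remark \ref{rem.a.est}: since $P^{n+1}e^{\delta P}$ extends to a bounded operator on $X$, we have $e^{\delta P}h \in D(P^{n+1})$ for every $h \in X$ and every $\delta > 0$. Writing $h_\delta \ede e^{\delta P}h \in D(P^{n+1})$ and using the semi-group law, one has
\begin{equation*}
 u(s + \delta) \seq e^{(s+\delta)P} h \seq e^{sP}\big( e^{\delta P} h \big) \seq e^{sP} h_\delta\,, \qquad s \ge 0\,.
\end{equation*}
I would then apply Lemma \ref{lemma.classical.s} to the translated function $v(s) \ede u(s + \delta) = e^{sP}h_\delta$: its hypotheses hold because the assumption that $D(P^n) \ni f \mapsto \pa^\alpha f \in \maC(\overline{\Omega})$ be continuous for $|\alpha| \le m$ is in force, $P$ generates a $c_0$ semi-group, $F = 0$, and the shifted datum $h_\delta$ lies in $D(P^{n+1})$. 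Hence $v$ is a classical solution on $[0, T - \delta]$: it is continuous on $[0, T-\delta] \times \Omega$, its derivatives $\pa_s v$ and $\pa^\alpha v$ ($|\alpha| \le m$) are continuous on $(0, T-\delta] \times \Omega$, and $\pa_s v - P v = 0$ there. Undoing the translation $t = s + \delta$ shows that $u$ has precisely these properties on $[\delta, T] \times \Omega$, i.e.\ $u$ is a classical solution on $[\delta, T]$. As $\delta > 0$ and $T < \infty$ were arbitrary, combining this with the weak-solution property yields that $u$ is a classical solution on $(0, \infty)$.

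The only step demanding genuine care---and the reason analyticity cannot here be relaxed to the mere $c_0$ property---is the smoothing inclusion $e^{\delta P}X \subset D(P^{n+1})$. For a general $c_0$ semi-group the range of $e^{\delta P}$ need not lie in $D(P^{n+1})$, so after the time shift Lemma \ref{lemma.classical.s} could not be supplied with an initial datum of the required regularity. Analyticity is exactly what furnishes this regularization, via the bound \eqref{eq.regularity} of Remark \ref{rem.a.est}.
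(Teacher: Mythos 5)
Your proposal is correct and follows the same route the paper intends: the paper's proof is simply ``combining the two lemmas above,'' i.e.\ Lemma \ref{lemma.weak} for the weak-solution property up to $t=0$ and Lemma \ref{lemma.classical.s} applied after the time shift $h_\delta := e^{\delta P}h \in D(P^{n+1})$, which is exactly what you carry out. Your explicit identification of the smoothing inclusion $e^{\delta P}X \subset D(P^{n+1})$ (via Remark \ref{rem.a.est}) as the point where analyticity is genuinely used is the right detail to make the paper's one-line argument rigorous.
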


\subsection{Function spaces} \label{sec.spaces}
We shall consider various weighted Sobolev spaces as follows.  Let
$\Omega \subset \RR^d$ be an open subset, as in the previous
subsection, and let $w \in L^1_{loc}(\Omega)$ satisfy $w \ge 0$.  If
$X$ is any Banach space of functions on $\Omega$ with norm $\| \cdot
\|_{X}$, we define
\begin{equation}\label{eq.def.wX}
 wX \ede \{\, w\xi, \ \xi \in X\, \} \,,
\end{equation}
with the norm $\| w \xi\|_{wX} := \|\xi\|_{X}$. Thus, if $p < \infty$,
if $X = L^p(\Omega, d\mu)$, and if $w > 0$ almost everywhere with
respect to $\mu$, $\mu \ge 0$, then $wX = L^p(\Omega, w^{-1/p} d\mu)$.
Of course, for any linear operator $T$ we have
\begin{equation}\label{eq.conjugation}
 T : wX \to wX \ \mbox{ is bounded if, and only if } \ w^{-1} T w : X
 \to X \ \mbox{ is bounded.}
\end{equation}
In fact, these two operators are unitarily equivalent.

In what follows, we choose weights of the form \ $w := e^{\lambda
  \weight}$, where $\langle \, \cdot \, \rangle$ denotes the Japanese bracket:
\begin{equation*}
 \weight \ede \sqrt{1 + x^{2}},
\end{equation*}
and $\lambda \in \RR$ is a parameter. The weight $w$ will be viewed as
acting on functions of $x \in \RR$ or of $(\sigma, x) \in I \times
\RR$, in the later case the weight being independent of $\sigma$.  For
simplicity, we shall usually write
\begin{multline}\label{eq.def.L2}
 H^m_{\lambda}(\RR) \ede  e^{\lambda \weight} H^m(\RR) \, = \,
 \{\, f : \RR \to \CC, \,  e^{-\lambda \weight}f \in H^m(\RR)  \, \}\\
 \, = \, \{\, f : \RR \to \CC, \  e^{-\lambda \weight} \pa^i f \in L^2(\RR),\  \,
 i \le m \, \}\, ,
\end{multline}
where the last equality is valid due to the fact that the weight $w(x)
= e^{\lambda \weight}$ has the property that $w^{-1} \pa^i w$ forms a
bounded family as operators on $H^m(\RR)$ (by writing $f = w g$, with
$g \in H^{m}(\RR)$). We also let $L_\lambda^2 = H_\lambda^0$.  We
recall that this choice of the weight function $w$ is justified by the
specific form of the initial data $h(x) := |e^{x} - K|_{+}$ for the
Cauchy problem for the $\lambda$SABR model \eqref{eq.def.lSABR}.

Let $I$ be a closed interval in $\RR$.
We consider, similarly, the spaces
\begin{multline}\label{eq.mixed.Sobolev}
 H^{i,j}_{\lambda}(I \times \RR)) \ede w H^i(I; H^{j}(\RR)) \, = \, \{
 \, u , \ \pa_\sigma^{\alpha} \pa_x^{\beta} u \in L^2_\lambda(I \times
 \RR), \ \alpha \le i, \beta \le j \, \}\, \\
 = \, \{ \, u, \ \pa_\sigma^{\alpha} \pa_x^{\beta} ( e^{- \lambda
   \weight} \, u ) \in L^2(I \times \RR), \ \alpha \le i, \beta \le j
 \, \}
 = \, H^i(I; H^{j}_{\lambda}(\RR)) \,.
\end{multline}

\subsection{Operators with totally bounded coefficients}

Let $\Omega = \RR$ or $\Omega = I \times \RR$, with $I\subset \RR$ an
interval.  We shall often use the following class of functions.

\begin{definition}
 A function $f : \Omega \to \CC$ is {\em totally bounded} if it is
 smooth and bounded and all its derivatives are also bounded.
\end{definition}

We have the following simple lemma.

\begin{lemma} \label{lemma.cont.bg}
Let $P$ be an order $m$ differential operator on $\Omega$ with totally
bounded coefficients. Then $P$ defines continuous a map
$H^{s}_{\lambda}(\Omega) \to H^{s-m}_{\lambda}(\Omega)$, for every $s
\ge m$.
\end{lemma}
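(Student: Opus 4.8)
The plan is to reduce to the unweighted case by conjugating with the weight $w := e^{\lambda \weight}$ and then to invoke the standard mapping properties of differential operators with totally bounded coefficients on ordinary Sobolev spaces.

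First I would use that multiplication by $w$ is, by the very definition of the weighted norms, an isometric isomorphism $H^s(\Omega) \to H^s_{\lambda}(\Omega)$ and, likewise, $H^{s-m}(\Omega) \to H^{s-m}_{\lambda}(\Omega)$. As in the conjugation principle \eqref{eq.conjugation}, it follows that $P : H^s_{\lambda}(\Omega) \to H^{s-m}_{\lambda}(\Omega)$ is bounded if, and only if, the conjugated operator $\widetilde{P} := w^{-1} P w : H^s(\Omega) \to H^{s-m}(\Omega)$ is bounded. Thus it suffices to establish the mapping property for $\widetilde{P}$ on the unweighted spaces.

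Second, I would check that $\widetilde{P}$ is again an order $m$ differential operator with totally bounded coefficients. Writing $P = \sum_{|\alpha| \le m} a_{\alpha} \pa^{\alpha}$ and expanding $\pa^{\alpha}(w g)$ by the Leibniz rule gives $\widetilde{P} = \sum_{|\beta| \le m} b_{\beta} \pa^{\beta}$, where $b_{\beta} = \sum_{\alpha \ge \beta} \binom{\alpha}{\beta} a_{\alpha}\, (w^{-1} \pa^{\alpha - \beta} w)$. The only place where the special form of the weight enters is the claim that each $w^{-1} \pa^{\gamma} w$ is totally bounded: indeed $\pa_x \weight = x/\weight$ and all higher derivatives of $\weight = \sqrt{1 + x^2}$ are bounded together with their own derivatives, so $w^{-1} \pa^{\gamma} w$ is a polynomial in the (totally bounded) derivatives of $\lambda \weight$ and is therefore totally bounded; this is exactly the property of $w$ recorded after \eqref{eq.def.L2}. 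Since products and sums of totally bounded functions are totally bounded and the $a_{\alpha}$ are totally bounded by hypothesis, every $b_{\beta}$ is totally bounded, and the top-order coefficients ($|\beta| = m$) equal $a_{\beta}$, so $\widetilde{P}$ has order exactly $m$. For $\Omega = I \times \RR$ the weight is independent of $\sigma$, so only $x$-derivatives of $\weight$ appear and the argument is unchanged.

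Finally, I would show that any order $m$ operator $Q = \sum_{|\beta| \le m} b_{\beta} \pa^{\beta}$ with totally bounded coefficients maps $H^s(\Omega) \to H^{s-m}(\Omega)$ continuously for $s \ge m$. It is enough to bound a single term $b_{\beta} \pa^{\beta}$. Since $|\beta| \le m$, differentiation is a bounded map $\pa^{\beta} : H^s(\Omega) \to H^{s - |\beta|}(\Omega)$, and from $s - |\beta| \ge s - m \ge 0$ the inclusion $H^{s - |\beta|}(\Omega) \hookrightarrow H^{s-m}(\Omega)$ is continuous; multiplication by the totally bounded function $b_{\beta}$ is then bounded on $H^{s-m}(\Omega)$ by the Leibniz rule and the uniform bounds on the derivatives of $b_{\beta}$. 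Composing these bounded maps gives the claim. The routine ingredients here --- the isometry of the weight and the boundedness of multiplication by totally bounded functions --- are entirely standard, so I expect the heart of the argument to be the second step: verifying that conjugation by $e^{\lambda \weight}$ preserves the class of operators with totally bounded coefficients, which rests on the boundedness of all derivatives of the Japanese bracket.
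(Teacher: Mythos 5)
Your proof is correct and is exactly the ``direct calculation'' that the paper leaves to the reader: conjugation by $w=e^{\lambda\weight}$ to reduce to unweighted spaces (the paper's Equation \eqref{eq.conjugation} and Lemma \ref{lemma.conjugation}), the observation that $w^{-1}\pa^{\gamma}w$ is totally bounded because all derivatives of the Japanese bracket are, and the standard boundedness of $\sum b_\beta\pa^\beta$ from $H^{s}$ to $H^{s-m}$. No gaps; this matches the paper's intended argument.
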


\begin{proof}
 The proof is a direct calculation.
\end{proof}

\begin{lemma}\label{lemma.conjugation}
 Let $P := \sum_{|\alpha|\le m} a_{\alpha} \pa^{\alpha}$ be an order
 $m$ differential operator on $\Omega$ with totally bounded
 coefficients. If $w(\sigma, x) = e^{\lambda \weight}$, as before,
 then $w^{-1} P w$ also has totally bounded coefficients and the same
 terms of order $m$ as $P$.
\end{lemma}

\begin{proof}
 We have $w^{-1} \pa_{\sigma} w = \pa_{\sigma}$ and $w^{-1} \pa_{x} w
 = \pa_{x} + w^{-1} \frac{\pa w}{\pa x} = \pa_x + \psi$, where $\psi
 := w^{-1} \frac{\pa w}{\pa x} = \lambda \frac{\pa \weight}{\pa x} =
 \lambda \weight'$. Since $\weight'$ is totally bounded, the result
 follows from the identity $(w^{-1} P_1 w)(w^{-1} P_2 w) = w^{-1}
 P_1P_2 w$ for any differentiable operators $P_1$ and $P_2$.
\end{proof}

We formulate the following result in slightly greater generality than
needed for the proof of the existence of the semi-group generated by
$L$, for further possible applications. Let us now recall the
definition of a second order uniformly strongly elliptic differential
operator on $\Omega = \RR$ or $\Omega = I \times \RR$, with real
coefficients, in the form that we will use in this paper.

\begin{definition}\label{def.u.s.e}
Let $P = a_{xx}(\sigma, x) \pa_x^{2} + 2a_{\sigma x}(\sigma, x)
\pa_\sigma \pa_x + a_{\sigma \sigma}(\sigma, x) \pa_{\sigma}^{2} +
b(\sigma, x) \pa_x + c(\sigma, x) \pa_{\sigma} + d(\sigma, x)$ be a
differential operator with real coefficients on $I \times \RR$.  We
say that $P$ is {\em uniformly strongly elliptic} if it has bounded
coefficients and if there exists $\epsilon > 0$ such that $a_{xx} \ge
\epsilon$ and $a_{xx} a_{\sigma \sigma} - a_{\sigma x}^2 \ge
\epsilon$.
\end{definition}

If $\Omega = \RR$, the operator $P$ reduces to $P = a_{xx}(\sigma, x)
\pa_x^{2} + b(\sigma, x) \pa_x + d(\sigma, x)$ and we have that $P$ is
uniformly strongly elliptic if it has bounded coefficients and there
exists $\epsilon > 0$ such that $a_{xx} \ge \epsilon$.

We have the following standard regularity results. We continue to
assume that $\Omega = I \times \RR$ or $\Omega = \RR$.

\begin{theorem}\label{thm.reg}
Let $P$ be second order, uniformly strongly elliptic differential
operator with totally bounded coefficients on $\Omega$. Assume $u \in
H^{1}_{\lambda}(\Omega)$ is such that $Pu \in
H_{\lambda}^{m-1}(\Omega)$.
If $\Omega = I \times \RR$, we also assume
that $u$ vanishes at the endpoints of $I$.
Then $u \in
H_{\lambda}^{m+1}(\Omega)$.  Moreover, there exists $C > 0$,
independent of $u$, such that $\|u\|_{H_{\lambda}^{m+1}(\Omega)} \le C
\big ( \|P u\|_{H_{\lambda}^{m-1}(\Omega)} +
\|u\|_{H_{\lambda}^{1}(\Omega)} \big )$.
\end{theorem}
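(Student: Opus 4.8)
The plan is to prove the elliptic regularity estimate by the standard technique of difference quotients combined with Gårding-type energy estimates, but carried out on the weighted spaces $H^m_\lambda(\Omega)$. The key reduction, which makes this tractable, is conjugation: by Lemma~\ref{lemma.conjugation}, the operator $w^{-1} P w$ (with $w = e^{\lambda\weight}$) is again a second-order uniformly strongly elliptic operator with totally bounded coefficients and with the same top-order part as $P$. Since conjugation by $w$ is an isometric isomorphism $H^m(\Omega) \to H^m_\lambda(\Omega)$ (as recorded in the definition of the weighted spaces and in~\eqref{eq.conjugation}), it suffices to prove the statement for the unweighted spaces $H^m(\Omega)$ with $\lambda = 0$. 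So I would first set $v := w^{-1} u$, $\tilde P := w^{-1} P w$, and reduce to proving $\|v\|_{H^{m+1}(\Omega)} \le C(\|\tilde P v\|_{H^{m-1}(\Omega)} + \|v\|_{H^1(\Omega)})$.

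Let me think about this more carefully before laying out the steps.

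**The approach** then proceeds by induction on $m$, the base case being the interior $H^2$ estimate and the boundary contributions. For the base case I would establish the basic Gårding inequality: uniform strong ellipticity (the conditions $a_{xx} \ge \epsilon$ and $a_{xx}a_{\sigma\sigma} - a_{\sigma x}^2 \ge \epsilon$) gives coercivity of the principal part, so that integration by parts yields a lower bound $\Re(\tilde P v, v) \ge c\|\nabla v\|_{L^2}^2 - C\|v\|_{L^2}^2$. To upgrade from the first-order control to genuine second-order control, I would use the method of tangential difference quotients in the $x$-direction (where $\Omega = \RR$ has no boundary, so all difference quotients are admissible), obtaining uniform bounds on $\|\pa_x \nabla v\|_{L^2}$; the missing pure second $\sigma$-derivative $\pa_\sigma^2 v$ is then recovered algebraically from the equation $\tilde P v = f$ by solving for $a_{\sigma\sigma}\pa_\sigma^2 v$, using that $a_{\sigma\sigma} \ge \epsilon/a_{xx} > 0$ is bounded below (a consequence of both ellipticity conditions). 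For the inductive step, I would differentiate the equation: applying $\pa^\gamma$ with $|\gamma| \le m-1$ commutes with $\tilde P$ up to lower-order commutators $[\tilde P, \pa^\gamma]$, which have totally bounded coefficients by the totally-bounded hypothesis, hence map $H^{m+1}$ boundedly into $H^{m-1}$ and can be absorbed into the right-hand side via interpolation.

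**The main obstacle** I expect is the handling of the boundary of $I$ in the $\sigma$-direction when $\Omega = I \times \RR$, since difference quotients in $\sigma$ are not freely available there and genuine boundary terms arise in the integrations by parts. The hypothesis that $u$ vanishes at the endpoints of $I$ is exactly what is needed to kill these terms, but one must be careful: the vanishing trace must be propagated to the differentiated quantities $\pa_x^k v$ (which is automatic, since $\pa_x$ is tangential to the boundary $\{\sigma = \alpha\} \cup \{\sigma = \beta\}$ and commutes with the trace), whereas pure $\sigma$-derivatives at the boundary are \emph{not} assumed to vanish and must instead be controlled by recovering them from the equation as above. Thus the delicate point is to arrange the induction so that each normal ($\sigma$) derivative is obtained algebraically from the equation rather than by differentiating through the boundary, while only tangential ($x$) derivatives are treated by difference quotients; this separation of tangential and normal regularity is the heart of boundary elliptic regularity and is where the coercivity constant $\epsilon$ and the vanishing boundary condition interact. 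The remaining ingredients—the commutator bounds, the interpolation inequalities used to absorb lower-order terms, and the passage back to weighted norms—are routine given Lemmas~\ref{lemma.cont.bg} and~\ref{lemma.conjugation}.
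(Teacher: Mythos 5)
Your proposal is correct and follows essentially the same route the paper indicates: reduction to $\lambda=0$ via the conjugation Lemma~\ref{lemma.conjugation}, followed by Nirenberg's difference-quotient argument (tangential quotients in $x$, with the normal derivative $\pa_\sigma^2 u$ recovered algebraically from the equation using $a_{\sigma\sigma}\ge \epsilon/\|a_{xx}\|_\infty>0$), exploiting that the boundary of $I\times\RR$ is straight. The only quibble is a sign convention: with the paper's Definition~\ref{def.u.s.e} the operator satisfies $\Re(Pv,v)\le -c\|\nabla v\|_{L^2}^2+C\|v\|_{L^2}^2$ rather than the lower bound you wrote, but this does not affect the argument.
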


A proof of this result can be obtained by first reducing to the case
$\lambda = 0$, that is, $w=1$, using Lemma \ref{lemma.conjugation} and
then either by using a dyadic partition of unity or by using divided
differences (this approach is sometimes called Nirenberg's trick after
\cite{ADN59}), which is facilitated in this case since the boundary is
straight (see, for instance, \cite{LionsMagenes1}).
We obtain the following consequence.

\begin{corollary}\label{cor.reg}
Let $P$ be second order, uniformly strongly elliptic differential
operator with totally bounded coefficients on $\RR$.  Then
$\|u\|_{L^2_{\lambda}} + \|P^k u\|_{L^2_{\lambda}}$ defines an
equivalent norm on $H^{2k}_{\lambda}(\RR)$.
\end{corollary}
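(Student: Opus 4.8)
The plan is to prove the asserted norm equivalence by establishing two inequalities. One direction is immediate: since $P$ is a second order operator with totally bounded coefficients, Lemma~\ref{lemma.cont.bg} shows that $P^{k}$ maps $H^{2k}_{\lambda}(\RR)$ continuously into $L^2_{\lambda}(\RR)$, so $\|P^k u\|_{L^2_\lambda} \le C\|u\|_{H^{2k}_\lambda}$; together with the trivial bound $\|u\|_{L^2_\lambda} \le \|u\|_{H^{2k}_\lambda}$ this gives $\|u\|_{L^2_\lambda} + \|P^k u\|_{L^2_\lambda} \le C\|u\|_{H^{2k}_\lambda}$. The substance of the corollary is the reverse estimate $\|u\|_{H^{2k}_\lambda} \le C\big(\|u\|_{L^2_\lambda} + \|P^k u\|_{L^2_\lambda}\big)$, which I would prove by induction on $k$, feeding the elliptic regularity estimate of Theorem~\ref{thm.reg} into itself. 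Note that on $\Omega = \RR$ there are no endpoints, so Theorem~\ref{thm.reg} applies with no boundary hypothesis.

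A preliminary ingredient is the family of interpolation inequalities: for every $0 \le j \le n$ and every $\epsilon > 0$ there is a constant $C_\epsilon$ with $\|u\|_{H^j_\lambda} \le \epsilon\|u\|_{H^n_\lambda} + C_\epsilon\|u\|_{L^2_\lambda}$. I would obtain these by first conjugating away the weight: by Lemma~\ref{lemma.conjugation} (equivalently, directly from the definition of the weighted norms) the map $u \mapsto e^{-\lambda \weight}u$ is an isometry $H^s_\lambda(\RR) \to H^s(\RR)$ for every $s$, so the statement reduces to the classical Gagliardo--Nirenberg interpolation inequality on $\RR$.

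For the base case $k=1$, Theorem~\ref{thm.reg} with $m=1$ gives $\|u\|_{H^2_\lambda} \le C\big(\|Pu\|_{L^2_\lambda} + \|u\|_{H^1_\lambda}\big)$; applying the interpolation inequality to $\|u\|_{H^1_\lambda}$ with $\epsilon$ small and absorbing the resulting small multiple of $\|u\|_{H^2_\lambda}$ into the left-hand side yields $\|u\|_{H^2_\lambda} \le C\big(\|u\|_{L^2_\lambda} + \|Pu\|_{L^2_\lambda}\big)$. For the inductive step I would apply Theorem~\ref{thm.reg} to $u$ with $m = 2k-1$, obtaining $\|u\|_{H^{2k}_\lambda} \le C\big(\|Pu\|_{H^{2k-2}_\lambda} + \|u\|_{H^1_\lambda}\big)$; then apply the induction hypothesis to $v := Pu$ (which lies in $H^{2k-2}_\lambda$ by Lemma~\ref{lemma.cont.bg}) to get $\|Pu\|_{H^{2k-2}_\lambda} \le C\big(\|Pu\|_{L^2_\lambda} + \|P^{k}u\|_{L^2_\lambda}\big)$, using that $P^{k-1}(Pu) = P^k u$. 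Finally I would bound $\|Pu\|_{L^2_\lambda} \le C\|u\|_{H^2_\lambda}$ by continuity and control both $\|u\|_{H^2_\lambda}$ and $\|u\|_{H^1_\lambda}$ by $\epsilon\|u\|_{H^{2k}_\lambda} + C_\epsilon\|u\|_{L^2_\lambda}$ via interpolation. Choosing $\epsilon$ small enough to absorb the $\epsilon\|u\|_{H^{2k}_\lambda}$ term into the left-hand side completes the induction.

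The main obstacle --- really the only nontrivial bookkeeping --- is the repeated use of interpolation to reduce the lower-order remainder terms ($\|u\|_{H^1_\lambda}$, $\|u\|_{H^2_\lambda}$, and $\|Pu\|_{L^2_\lambda}$) to $\|u\|_{L^2_\lambda}$ plus a small, absorbable multiple of the top-order norm $\|u\|_{H^{2k}_\lambda}$. Organizing the estimate as an induction, rather than as a direct iteration of Theorem~\ref{thm.reg}, is precisely what keeps the intermediate quantities $\|P^j u\|_{L^2_\lambda}$ with $0 < j < k$ from proliferating, since the induction hypothesis already packages them into the two endpoint norms $\|u\|_{L^2_\lambda}$ and $\|P^k u\|_{L^2_\lambda}$.
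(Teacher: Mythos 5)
Your proof is correct and follows exactly the route the paper intends: the paper states this corollary with no written proof, simply as a consequence of Theorem~\ref{thm.reg}, and your argument---the easy direction via Lemma~\ref{lemma.cont.bg}, the hard direction by inducting on $k$ through Theorem~\ref{thm.reg} (with no boundary condition since $\Omega=\RR$) and absorbing the lower-order terms by Ehrling-type interpolation transferred to the weighted spaces via the isometry $u\mapsto e^{-\lambda\weight}u$---is the standard way to fill in that implicit derivation.
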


The above two results hold in the more general framework of manifolds
with bounded geometry. See, for example, \cite{MazzucatoNistor1} and
the references therein. See also \cite{amannFunctSpaces, amannAnis, sobolev, Arendt,
Disconzi, nadine, kordyukovLp1} for more
recent results on PDEs on manifolds with bounded geometry.

\section{The semi-group generated by $L$ and $B$\label{sec3}}

In this section, we show that $L$ and $B$ generate analytic
semi-groups using the Lumer--Phillips theorem and the results of the
previous section. Our approach is standard and well known for the case
of operators on standard Sobolev spaces. The analysis on exponentially
weighted spaces is less developed. For the reader's sake, we work in
detail the slightly more complicated case of the operator $L$ and only
sketch the proofs of the results for $B$.

\subsection{The differential operator $L$}
Our next goal is to show that the operator $L$ is quasi-dissipative on
weighted Sobolev spaces.  The space
\begin{equation}\label{def.maK}
 \maK_0 \ede H^{2}_{\lambda}(I \times \RR) \cap \{u = 0 \mbox{ on }
 \pa I \times \RR \}
\end{equation}
will be the common domain of several operators, so it will play an
important role in what follows. We formulate the following result in
slightly greater generality than needed for the proof of the existence
of the semi-group generated by $L$, for further possible applications.

\begin{definition} \label{def.calP}
Let $\maP$ denote the set of second order differential operators $T =
a_{xx}(\sigma, x) \pa_x^{2} + 2a_{\sigma x}(\sigma, x) \pa_\sigma
\pa_x + a_{\sigma \sigma}(\sigma, x) \pa_{\sigma}^{2} + b(\sigma, x)
\pa_x + c(\sigma, x) \pa_{\sigma} + d(\sigma, x)$ with totally
bounded, real coefficients on $I \times \RR$ and satisfying
\begin{equation*}
 a_{xx}, \ a_{\sigma\sigma}, \ a_{xx}a_{\sigma\sigma} - a_{\sigma x}^2
 \ge 0 \,.
\end{equation*}
\end{definition}

For $T$ as in this definition, we shall denote
\begin{equation}\label{eq.def.matrix}
M_T \ede \left [
 \begin{array}{cc}
  a_{xx} & a_{\sigma x}\\
  a_{\sigma x} & a_{\sigma x}
 \end{array}
 \right ]
\end{equation}
the matrix determined by its highest order coefficients (the
principal symbol) of~$T$.

\begin{proposition}\label{prop.dissipative}
If $w(\sigma, x) = e^{\lambda \weight}$ and $T \in \maP$, then
$w^{-1}Tw \in \maP$.  Let $M_T$ be as in Equation \eqref{eq.def.matrix},
then there exists $C>0$ such that
\begin{equation*}
  (Tu, u)_{L^2_{\lambda}(I \times \RR)} \, \le \, - \int_{I \times
    \RR} (M_T \nabla u, \nabla u) e^{-2\lambda \weight}\, d\sigma dx \,
  + \, C \|u\|^{2}_{L^2_{\lambda}(I \times \RR)}\, , \ \quad u \in
  \maK_0\,,
\end{equation*}
and hence, $T$ with domain $\maK_0 := H^{2}_{\lambda}(I \times \RR)
\cap \{u = 0 \mbox{ on } \pa I \times \RR \}$ is quasi dissipative on
$L^2_{\lambda}(I \times \RR)$.
\end{proposition}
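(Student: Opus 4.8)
The first assertion, that $w^{-1}Tw\in\maP$, is immediate from Lemma~\ref{lemma.conjugation}: conjugation by $w=e^{\lambda\weight}$ keeps the coefficients totally bounded and leaves the principal (order-two) coefficients $a_{xx},a_{\sigma x},a_{\sigma\sigma}$ unchanged, so in particular $M_{w^{-1}Tw}=M_T$. Since membership in $\maP$ is a condition on these principal coefficients only (namely $a_{xx},a_{\sigma\sigma},a_{xx}a_{\sigma\sigma}-a_{\sigma x}^2\ge 0$, i.e.\ $M_T$ positive semi-definite), it is inherited by $w^{-1}Tw$. For the estimate, the plan is to compute $\Re(Tu,u)_{L^2_{\lambda}(I\times\RR)}$ directly, carrying the weight throughout, since the inner product on $L^2_{\lambda}$ is $\int u\,\overline{v}\,e^{-2\lambda\weight}\,d\sigma\,dx$ (it suffices to bound the real part, as that is what enters Definition~\ref{def.qd}, and the stated right-hand side is real). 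It is enough to treat $u$ smooth and compactly supported in $x$ with $u=0$ on $\pa I\times\RR$, the general case $u\in\maK_0$ following by density.

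Integrating each of the three second-order terms of $T$ by parts once moves one derivative onto $\bar u\,e^{-2\lambda\weight}$. The contributions in which both derivatives land on $u$ assemble, after taking real parts to symmetrize the mixed term, precisely into the principal quadratic form $-\int_{I\times\RR}(M_T\nabla u,\nabla u)\,e^{-2\lambda\weight}\,d\sigma\,dx$, with $M_T$ as in \eqref{eq.def.matrix}. Here the boundary terms in $\sigma$ at $\pa I$ vanish because of the condition $u=0$ on $\pa I\times\RR$ built into $\maK_0$, and the boundary terms in $x$ vanish by the decay of $u$.

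Every remaining term is of lower order, that is, contains at most one derivative of $u$. Such terms appear when a derivative falls on a coefficient (producing a bounded factor, since the coefficients are totally bounded) or on the weight (producing the factor $\pa_x e^{-2\lambda\weight}/e^{-2\lambda\weight}=-2\lambda\weight'$, which is bounded because $|\weight'|\le 1$), together with the genuine first- and zeroth-order terms $b\pa_x u$, $c\pa_\sigma u$, $d\,u$ of $T$. The key point is that each first-order-in-$u$ contribution, after taking its real part, can be integrated by parts once more via $\Re\bigl(f(\pa u)\bar u\bigr)=\tfrac12 f\,\pa|u|^2$; this turns it into $-\tfrac12\int(\pa f)|u|^2\,e^{-2\lambda\weight}$ plus (for the $\sigma$-derivative) a boundary term that again vanishes by $u=0$ on $\pa I$. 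All resulting coefficients are bounded, so every lower-order term is controlled by $C\|u\|_{L^2_{\lambda}(I\times\RR)}^2$, and collecting these bounds yields the displayed inequality for $\Re(Tu,u)_{L^2_{\lambda}}$. Quasi-dissipativity then follows at once: $T\in\maP$ makes $M_T$ positive semi-definite, so the principal term is $\le 0$ and hence $\Re(Tu,u)_{L^2_{\lambda}}\le C\|u\|^2_{L^2_{\lambda}}$; since $L^2_{\lambda}(I\times\RR)$ is a Hilbert space, $\maF(u)$ is the single functional $(\,\cdot\,,u)_{L^2_{\lambda}}$, and this is exactly Definition~\ref{def.qd} with $\mu=C$.

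The main obstacle is the careful bookkeeping of the integration by parts, especially the mixed second-order term (whose real part must be symmetrized to recover the off-diagonal entry $a_{\sigma x}$ of $M_T$) and the tracking of which boundary terms survive. The essential structural point that makes the estimate close is that passing to the real part annihilates every genuinely first-order-in-$u$ contribution up to a bounded zeroth-order remainder; without this step one could not absorb those terms into $C\|u\|^2$, because $M_T$ is only positive semi-definite and not uniformly elliptic, so gradient terms cannot be re-absorbed into the principal part.
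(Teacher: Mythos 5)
Your proof is correct and follows essentially the same route as the paper's: an integration-by-parts energy estimate in which the second-order terms assemble into $-\int(M_T\nabla u,\nabla u)$, the first-order contributions are reduced to bounded zeroth-order remainders by taking real parts and integrating by parts once more (with the boundary terms in $\sigma$ vanishing because $u\in\maK_0$), and quasi-dissipativity follows from the positive semi-definiteness of $M_T$. The only difference is organizational: the paper first conjugates by $w$ to reduce to $\lambda=0$ and rewrites $T$ in divergence form before integrating by parts, whereas you carry the weight $e^{-2\lambda\weight}$ through the computation directly, which if anything matches the precise weighted form of the stated inequality more closely.
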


\begin{proof}
The fact that $w^{-1}Tw$ is of the same form as $T$ follows from Lemma
\ref{lemma.conjugation}.  In view of Equation \eqref{eq.conjugation},
we can assume that $\lambda = 0$, that is, $w := e^{\lambda \weight} =
1$.  The rest of the proof is then a well-known direct calculation,
which we include for the benefit of the reader. Since we work with
Hilbert spaces, we can take $f^*(\xi) = (\xi, f)$ in the condition
defining the quasi dissipativity. We first notice that, by changing
$b$, $c$, and $d$, we can assume that $Tu = \pa_x (a_{xx} \pa_x u) +
\pa_\sigma (a_{\sigma x} \pa_x u) + \pa_x (a_{\sigma x} \pa_\sigma u)
+ \pa_\sigma (a_{\sigma \sigma} \pa_\sigma u) + b\pa_x u + c
\pa_{\sigma}u + d u$. Then, we perform a standard energy estimate,
in which the integration by parts is justified by the fact that
$u \in \maK_0$:
\begin{multline}\label{eq.coeff.c}
 2 \Re \big( c \pa_\sigma u, u) \ede 2 \Re \int_{I \times \RR} c
 (\pa_\sigma u) \overline{u} \, d\sigma dx \ = \ \int_{I \times \RR} c
 (\pa_\sigma u) \overline{u} \, d\sigma dx \\
 \, + \, \int_{I \times \RR} c (\pa_\sigma \overline{u}) u \, d\sigma
 dx
 = \ \int_{I \times \RR} \pa_\sigma (c |u|^2) \, d\sigma dx \, - \,
 \int_{I \times \RR} (\pa_\sigma c) |u|^2 \, d\sigma dx
 \\ = \ \int_{\RR} \big (c(\beta, x) |u(\beta, x)|^2 - c(\alpha, x)
 |u(\alpha, x)|^2 \big ) \, dx - \int_{I \times \RR} (\pa_\sigma c)
 |u|^2 \, d\sigma dx
 \\ = \ - \int_{I \times \RR} (\pa_\sigma c) |u|^2 \, d\sigma dx\,,
\end{multline}
using that $c$ is real valued and the fact that  $u \in \maK_0$.
 Similarly, since $b$ is also real valued,
\begin{multline}\label{eq.coeff.b}
 2 \Re \big( b \pa_x u, u) \ede 2 \Re \int_{I \times \RR} b (\pa_x u)
 \overline{u} \, d\sigma dx \ = \ \int_{I \times \RR} b (\pa_x u)
 \overline{u} \, d\sigma dx \\
 \, + \, \int_{I \times \RR} b (\pa_x \overline{u}) u \, d\sigma dx
 = \ \int_{I \times \RR} \pa_x (b |u|^2) \, d\sigma dx \, - \, \int_{I
   \times \RR} (\pa_x b) |u|^2 \, d\sigma dx
 \\ = \ - \int_{I \times \RR} (\pa_x b) |u|^2 \, d\sigma dx \,.
\end{multline}

Next, we consider the quadratic terms.
By assumption, all the eigenvalues of the matrix $M_T$ of
Equation \eqref{eq.def.matrix}
are non-negative. Let $\delta$ the smallest of the eigenvalues of $M_T$.
We obtain:
\begin{multline}\label{eq.coeff.a}
 - \big( \pa_x (a_{xx} \pa_x u) + \pa_\sigma (a_{\sigma x} \pa_x u) +
 \pa_x (a_{\sigma x} \pa_\sigma u) + \pa_\sigma (a_{\sigma \sigma}
 \pa_\sigma u), u \big ) \\
 = \, \int_{I \times \RR} \Big ( a_{xx} (\pa_x u) \pa_x \overline{u} +
 a_{\sigma x} (\pa_\sigma u) \pa_x \overline{u} + a_{\sigma x} (\pa_x
 u) \pa_\sigma \overline{u} + a_{\sigma \sigma} (\pa_\sigma u)
 \pa_\sigma \overline{u} \Big )\, d\sigma dx \\
 \seq \int_{I \times \RR} (M_T \nabla u, \nabla u) \, d\sigma dx \ \ge
 \ \delta \int_{I \times \RR} |\nabla u(\sigma, x) |^{2} \, d\sigma
 dx \,,
\end{multline}
and the last term is positive since the quadratic form defined by
$a_{xx}$, $a_{\sigma x}$, and $a_{\sigma x}$ is positive, by
assumption.

Combining Equations \eqref{eq.coeff.c}, \eqref{eq.coeff.b}, and
\eqref{eq.coeff.a}, we obtain
\begin{multline}\label{eq.Garding}
 2 \Re \big( T u, u) \, \le \, - \int_{I \times \RR} (M \nabla u,
 \nabla u) \, d\sigma dx \, d\sigma dx
 - \int_{I \times \RR} \big( \pa_x b + \pa_\sigma c + d \big ) |u|^2
 \, d\sigma dx\\
\le \, - \int_{I \times \RR} (M_T \nabla u, \nabla u) \, d\sigma dx \, +
\, C \|u\|^{2} \, \le \, - \delta \int_{I \times \RR} \|\nabla
u(\sigma, x) \|^{2} \, d\sigma dx \, + \, C \|u\|^{2} \,,
\end{multline}
where $C = \|\pa_x b + \pa_\sigma c + d\|_{\infty}$.  The fact that
$T$ is quasi-dissipative follows since $\delta \ge 0$.
\end{proof}

Let us note also, for further reference, the following consequences of
the calculation in the above proof.

\begin{corollary}\label{cor.cont}
 Let $T$ be as in Proposition \ref{prop.dissipative}. Then there
 exists a constant $C> 0$ such that $|(Tu, u)| \le
 C\|u\|_{H^{1}_{\lambda}(I \times \RR)}$
\end{corollary}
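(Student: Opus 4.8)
The plan is to read the estimate off directly from the energy computation already performed in the proof of Proposition~\ref{prop.dissipative}, except that now I keep track of the full complex inner product rather than only its real part. First I would reduce to the unweighted case $\lambda = 0$: by Lemma~\ref{lemma.conjugation} the conjugated operator $w^{-1}Tw$ (with $w = e^{\lambda \weight}$) again lies in $\maP$, and by the unitary equivalence \eqref{eq.conjugation} we have $(Tu,u)_{L^2_{\lambda}} = (w^{-1}Tw\,\xi, \xi)_{L^2}$ with $\xi := w^{-1}u$, while $\|u\|_{H^1_{\lambda}} = \|\xi\|_{H^1}$. Hence it suffices to prove $|(Tu,u)| \le C\|u\|^2_{H^1(I \times \RR)}$ for the standard $L^2$ inner product.

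Next I would put $T$ in divergence form exactly as in the proof of Proposition~\ref{prop.dissipative}, absorbing the resulting first-order terms into $b$ and $c$, so that
\begin{equation*}
 Tu = \pa_x(a_{xx}\pa_x u) + \pa_\sigma(a_{\sigma x}\pa_x u) + \pa_x(a_{\sigma x}\pa_\sigma u) + \pa_\sigma(a_{\sigma\sigma}\pa_\sigma u) + b\,\pa_x u + c\,\pa_\sigma u + d\,u.
\end{equation*}
Pairing against $\overline{u}$ and integrating by parts in each second-order term---which is legitimate because $u \in \maK_0$ vanishes on $\pa I \times \RR$, killing the boundary contributions in $\sigma$ at $\alpha,\beta$, while the boundary terms in $x$ vanish by the $H^2$ decay of $u$---yields
\begin{equation*}
 (Tu, u) = -\int_{I\times\RR}(M_T\nabla u, \nabla u)\, d\sigma dx + \int_{I\times\RR}\big(b\,\pa_x u + c\,\pa_\sigma u\big)\overline{u}\, d\sigma dx + \int_{I\times\RR} d\,|u|^2\, d\sigma dx.
\end{equation*}

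Finally I would apply the triangle inequality together with the total boundedness of the coefficients, say by a constant $M$. Here the sign condition on $M_T$ plays no role: only the pointwise bound $|(M_T\nabla u,\nabla u)| \le C|\nabla u|^2$ is needed, so the quadratic term is controlled by $C\int|\nabla u|^2 \le C\|u\|^2_{H^1}$. For the first-order terms, Cauchy--Schwarz and Young's inequality give $\int |b|\,|\pa_x u|\,|u| \le M\|\pa_x u\|_{L^2}\|u\|_{L^2} \le C\|u\|^2_{H^1}$, and likewise for the $c$-term, whereas the zeroth-order term is at most $M\|u\|^2_{L^2}$. Summing, $|(Tu,u)| \le C\|u\|^2_{H^1}$, and transporting back through the conjugation yields $|(Tu,u)| \le C\|u\|^2_{H^1_{\lambda}(I \times \RR)}$ (the exponent $2$, absent from the displayed statement, is of course forced by the quadratic dependence on $u$). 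Every step is a routine integration by parts followed by Cauchy--Schwarz, so there is no genuine obstacle; the only point demanding care is that one must estimate the modulus of the \emph{full} complex inner product, so---unlike in the dissipativity bound---the imaginary contributions cannot be discarded, but since the coefficients are bounded and both $u$ and $\nabla u$ are controlled in $L^2_\lambda$, every term is subsumed by the $H^1_\lambda$ norm.
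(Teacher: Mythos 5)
Your proof is correct and follows essentially the same route as the paper: reduce to $\lambda=0$ by conjugation, write $T$ in divergence form, integrate by parts using $u\in\maK_0$, and bound the quadratic term by the boundedness of the coefficients (the paper phrases this via the largest eigenvalue $\mu$ of $M_T$) and the lower-order terms by Cauchy--Schwarz. You are also right that the intended estimate is $|(Tu,u)|\le C\|u\|^{2}_{H^{1}_{\lambda}(I\times\RR)}$; the missing exponent in the displayed statement is a typo.
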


\begin{proof}
 This is a simple calculation, very similar to those in the proof of
 Proposition \ref{prop.dissipative}. In particular, we can assume
 $\lambda = 0$.  The main difference is with Equation
 \eqref{eq.for.a}, which is replaced by
 \begin{multline*} % \label{eq.coeff.a.bis}
 0 \le - \big( \pa_x (a_{xx} \pa_x u) + \pa_\sigma (a_{\sigma x} \pa_x
 u) + \pa_x (a_{\sigma x} \pa_\sigma u) + \pa_\sigma (a_{\sigma
   \sigma} \pa_\sigma u), u \big ) \\
 = \, \int_{I \times \RR} \Big ( a_{xx} (\pa_x u) \pa_x \overline{u} +
 a_{\sigma x} (\pa_\sigma u) \pa_x \overline{u} + a_{\sigma x} (\pa_x
 u) \pa_\sigma \overline{u} + a_{\sigma \sigma} (\pa_\sigma u)
 \pa_\sigma \overline{u} \Big )\, d\sigma dx \\
 \le \ \mu \int_{I \times \RR} \|\nabla u(\sigma, x) \|^{2} \, d\sigma
 dx \, \le \, \mu \|u\|_{H^1(I \times \RR)}^{2}\,,
 \end{multline*}
 where $\mu$ is the largest of the eigenvalues of the matrix $M_T$ of
 Equation \eqref{eq.def.matrix}.
\end{proof}

Garding's inequality also holds in our setting. We have the opposite
sign to the one that is typically used, as we work with
negative-definite operators.

\begin{corollary}\label{cor.Garding}
Let $T$ be as in the statement of Proposition \ref{prop.dissipative}.
Assume also that there exists $\epsilon > 0$ such that
$a_{xx}a_{\sigma\sigma} - a_{\sigma x}^2 \ge \epsilon$.
Then there
exist $C_1 > 0$ and $C_2$ such that
\begin{equation*}
 \Re (Tu, u) \, \le \, - C_1\|u\|_{H^{1}_{\lambda}(I \times \RR)}^{2}
 + C_2 \|u\|_{L^{2}_{\lambda}(I \times \RR)}^{2}\,.
\end{equation*}
Also, if $u \in H^1_{\lambda}(I \times \RR) \cap \{u\vert_{\pa I
  \times \RR} = 0\}$ satisfies $Tu \in L^2_{\lambda}(I \times \RR)$,
then $u \in H^{2}_{\lambda}(I \times \RR)$.  Consequently, $T - \mu_0
: \maK_0 \to L^2_{\lambda}(I \times \RR)$ is invertible for $\mu_0 >
C_2$.
\end{corollary}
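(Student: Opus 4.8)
The plan is to establish the three assertions of Corollary \ref{cor.Garding} in sequence, leveraging the energy estimate already obtained in Proposition \ref{prop.dissipative} and then invoking the elliptic regularity theorem (Theorem \ref{thm.reg}) and a standard functional-analytic argument for invertibility.

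\textbf{Step 1: The G\aa{}rding-type inequality.} First I would reduce to the case $\lambda = 0$ using Equation \eqref{eq.conjugation} and Lemma \ref{lemma.conjugation}, exactly as in the proof of Proposition \ref{prop.dissipative}, so that $w = 1$ and the $L^2_\lambda$ and $H^1_\lambda$ norms become the ordinary $L^2$ and $H^1$ norms. The key difference from Proposition \ref{prop.dissipative} is the strengthened hypothesis $a_{xx}a_{\sigma\sigma} - a_{\sigma x}^2 \ge \epsilon$, together with $a_{xx} \ge 0$; in fact since $M_T$ is a $2\times 2$ symmetric matrix with nonnegative trace and determinant $\ge \epsilon$, both eigenvalues are bounded below by a positive constant $\delta > 0$ depending only on $\epsilon$ and the sup-norm of the coefficients. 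I would then re-run the energy estimate \eqref{eq.coeff.a}: the quadratic form $(M_T \nabla u, \nabla u)$ is now bounded below by $\delta |\nabla u|^2$ pointwise, so the principal part contributes $-\delta \int_{I\times\RR} |\nabla u|^2\, d\sigma dx$. Combining this with the lower-order terms, which are controlled by $C\|u\|_{L^2}^2$ as in \eqref{eq.Garding}, yields
\[
 \Re(Tu, u) \le -\delta \|\nabla u\|_{L^2}^2 + C\|u\|_{L^2}^2\,.
\]
Finally, writing $\|u\|_{H^1}^2 = \|\nabla u\|_{L^2}^2 + \|u\|_{L^2}^2$, I would absorb the $\|u\|_{L^2}^2$ piece into the constant to obtain $\Re(Tu,u) \le -C_1 \|u\|_{H^1}^2 + C_2 \|u\|_{L^2}^2$ with $C_1 = \delta$ and a suitably enlarged $C_2$, then undo the reduction to return to the weighted norms.

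\textbf{Step 2: Elliptic regularity.} For the regularity claim, the strengthened hypothesis $a_{xx}a_{\sigma\sigma} - a_{\sigma x}^2 \ge \epsilon$ together with $a_{xx} \ge 0$ (and the trace condition) makes $T$ uniformly strongly elliptic in the sense of Definition \ref{def.u.s.e}, after possibly noting $a_{xx} \ge \epsilon$ follows from the determinant bound and boundedness of the coefficients. Thus I would apply Theorem \ref{thm.reg} with $m = 1$ directly: if $u \in H^1_\lambda(I\times\RR)$ vanishes on $\partial I \times \RR$ and $Tu \in L^2_\lambda(I\times\RR) = H^0_\lambda(I\times\RR)$, then $u \in H^2_\lambda(I\times\RR)$, as asserted.

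\textbf{Step 3: Invertibility of $T - \mu_0$.} For the final invertibility statement, the G\aa{}rding inequality gives, for $\mu_0 > C_2$,
\[
 \Re\big((\mu_0 - T)u, u\big) \ge C_1 \|u\|_{H^1_\lambda}^2 + (\mu_0 - C_2)\|u\|_{L^2_\lambda}^2 \ge C_1 \|u\|_{H^1_\lambda}^2\,,
\]
which is the coercivity needed to run Lax--Milgram on the bilinear form associated to $\mu_0 - T$ on $H^1_\lambda(I\times\RR) \cap \{u|_{\partial I \times \RR} = 0\}$; this produces, for every $f \in L^2_\lambda$, a unique weak solution $u$ in this form domain of $(\mu_0 - T)u = f$. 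The regularity result of Step 2 then upgrades this weak solution to $u \in H^2_\lambda(I\times\RR)$, i.e.\ $u \in \maK_0$, so that $T - \mu_0 : \maK_0 \to L^2_\lambda(I\times\RR)$ is onto; injectivity is immediate from the same coercive estimate. I expect the main obstacle to be a careful bookkeeping point rather than a conceptual one: one must verify that the boundary terms in the integration by parts genuinely vanish for $u$ in the form domain (only the Dirichlet trace on $\partial I \times \RR$ is prescribed, and the estimates in \eqref{eq.coeff.c}--\eqref{eq.coeff.a} rely on this), and that the coercivity constant $C_1$ survives the passage back from $\lambda = 0$ to general $\lambda$ uniformly. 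Once the uniform lower bound $\delta > 0$ on the eigenvalues of $M_T$ is pinned down from $\epsilon$ and the coefficient bounds, the remaining steps are routine applications of the cited results.
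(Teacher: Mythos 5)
Your proposal is correct and follows essentially the same route as the paper: reduce to $\lambda=0$, read the G\aa{}rding inequality off the energy estimate \eqref{eq.Garding} (with the determinant hypothesis forcing a uniform positive lower bound $\delta$ on the eigenvalues of $M_T$), obtain $H^2$-regularity from Theorem \ref{thm.reg} via uniform strong ellipticity, and combine Lax--Milgram with that regularity to get invertibility of $T-\mu_0$ on $\maK_0$. Your additional remarks pinning down $\delta$ in terms of $\epsilon$ and the coefficient bounds, and checking $a_{xx}\ge\epsilon/M$, supply details the paper leaves implicit.
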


\begin{proof}
Garding's inequality is an immediate consequence of Equation
\eqref{eq.Garding}. The rest is a consequence of this inequality, and
we only outline the main steps in the proof.

First of all, by Lemma \ref{lemma.conjugation}, we can assume that
$\lambda = 0$.  Let $\Omega := I \times \RR$. Garding's inequality
allows us to invoke the Lax-Milgram Lemma, which gives that $T -
\mu_0 : H^1(I \times \RR) \cap \{u\vert_{\pa \Omega} = 0\} \to
H^{-1}(I \times \RR)$ is invertible for $\mu_0 > C_2$ (that is, $T -
\mu_0$ is a continuous bijection with continuous inverse).

By replacing $T$ with $T - \mu_0$, if necessary, we can assume that $T
: H^1(I \times \RR) \cap \{u\vert_{\pa \Omega} = 0\} \to H^{-1}(I
\times \RR)$ is invertible. The assumptions on our coefficients (that
they are bounded and that $a_{xx} \ge 0$, $a_{\sigma\sigma}\ge 0$, and
$a_{xx}a_{\sigma\sigma} - a_{\sigma x}^2 \ge \epsilon > 0$) imply that
$T$ is uniformly strongly elliptic (see Definition
\ref{def.u.s.e}). Therefore, it satisfies elliptic regularity (Theorem
\ref{thm.reg}). In particular, if $u \in H^1(I \times \RR) \cap
\{u\vert_{\pa \Omega} = 0\}$ is such that $Tu \in L^2(I \times \RR)$,
then $u \in H^2(I \times \RR)$ and hence, by taking into account that
$u$ vanishes at the boundary, $u \in \maK_0$. We finally obtain that
\begin{equation*}
 T : \maK_0 \ede H^{2}_{\lambda}(I \times \RR) \cap \{u = 0 \mbox{ on
 } \pa \Omega = \pa I \times \RR \} \, \to \, L^2(I \times \RR)
\end{equation*}
is both injective and surjective, and hence it is invertible.  (The
continuity of the inverse follows either from abstract principles,
namely from the Open Mapping Theorem, or, constructively, from Theorem
\ref{thm.reg}.)
\end{proof}

We obtain as a consequence the following theorem.

\begin{theorem}\label{thm.gen.L}
 Let $T$ be as in the statement of Corollary \ref{cor.Garding}. Then
 $T$ generates an analytic semi-group $e^{tL}$ on $L^2_{\lambda}(I
 \times \RR)$.  In particular, if $I = (\alpha, \beta)$ is a bounded
 interval with $0 < \alpha \le \beta < \infty$, then $L$ as given in
 \eqref{eq.def.lSABR} satisfies the hypothesis of Corollary
 \ref{cor.Garding}, and hence it generates an analytic semi-group on
 $L^2_{\lambda}(I \times \RR)$.
\end{theorem}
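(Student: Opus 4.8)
The plan is to verify, for the general operator $T$ of Corollary \ref{cor.Garding}, the hypotheses of Theorem \ref{thm.analytic} (with the sector provided by Lemma \ref{lemma.n.r}), and then to check that $L$ itself falls under Corollary \ref{cor.Garding}. All the analytic ingredients have in fact already been assembled: the Garding-type bound $\Re(Tu,u) \le -C_1\|u\|_{H^1_\lambda}^2 + C_2\|u\|_{L^2_\lambda}^2$, the invertibility of $T-\mu_0:\maK_0 \to L^2_\lambda(I\times\RR)$ for $\mu_0 > C_2$ (both from Corollary \ref{cor.Garding}), and the continuity estimate $|(Tu,u)| \le C\|u\|_{H^1_\lambda}^2$ (Corollary \ref{cor.cont}). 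Since $\maK_0 \supset \CIc(I\times\RR)$, the domain is dense, so $T$ is densely defined. The remaining work is only to package these correctly.

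The single maneuver needed is to match the normalization of Lemma \ref{lemma.n.r}, which is stated without a spectral shift. To this end I would set $\widetilde T := T - C_2$ (same domain $\maK_0$) and check its two hypotheses with $m=1$, since $T$ has order $2m=2$. Subtracting $C_2\|u\|_{L^2_\lambda}^2$ from the Garding bound and using $\|u\|_{L^2_\lambda}\le \|u\|_{H^1_\lambda}$ gives $\Re(\widetilde T u,u) \le -C_1\|u\|_{H^1_\lambda}^2$, while $|(\widetilde T u,u)| \le |(Tu,u)| + C_2\|u\|_{L^2_\lambda}^2 \le (C+C_2)\|u\|_{H^1_\lambda}^2$. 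Lemma \ref{lemma.n.r} then gives $\mathfrak{N}(\widetilde T)\subset \Delta_\vartheta^c$ for some $\vartheta>\pi/2$, that is $\mathfrak{N}(T)\subset C_2 + \Delta_\vartheta^c$. Combined with the density of the domain and the invertibility of $T-\lambda$ for $\lambda$ large, the hypotheses of Theorem \ref{thm.analytic} (with $\mu=C_2$) hold, and $T$ generates an analytic semi-group on $L^2_\lambda(I\times\RR)$. (Equivalently, apply Corollary \ref{cor.analytic} to $\widetilde T$ and use $e^{tT}=e^{C_2 t}e^{t\widetilde T}$.)

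For $L$ as in \eqref{eq.def.lSABR}, I would confirm it satisfies Corollary \ref{cor.Garding} when $\sigma\in I=(\alpha,\beta)$, $0<\alpha\le\beta<\infty$. Reading off the second-order part from the decomposition \eqref{eq.decomp.L}--\eqref{def.operators}, the principal-symbol matrix is
\[
 M_L \seq \frac{\sigma^2}{2}\begin{pmatrix} 1 & \nu\rho \\ \nu\rho & \nu^2 \end{pmatrix},
 \qquad \det M_L \seq \frac{\nu^2\sigma^4}{4}\,(1-\rho^2).
\]
The coefficients of $L$ are polynomials in $\sigma$ with no $x$-dependence, hence totally bounded on $I\times\RR$ precisely because $I$ is bounded. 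Since $0<\alpha\le\sigma\le\beta$ and $\nu>0$, we have $a_{xx}=\sigma^2/2 \ge \alpha^2/2 >0$ and $a_{\sigma\sigma}=\nu^2\sigma^2/2\ge 0$; assuming $|\rho|<1$, we get $\det M_L \ge \tfrac14\nu^2\alpha^4(1-\rho^2)=:\epsilon>0$. Thus $L$ meets every hypothesis of Corollary \ref{cor.Garding}, and the analytic semi-group is produced by the first part of the theorem.

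I do not expect a serious analytic obstacle, since dissipativity, continuity, and invertibility are already in hand; the only real content is invoking Lemma \ref{lemma.n.r} after the shift by $C_2$, which is what places $\mathfrak{N}(\widetilde T)$ in the sector $\Delta_\vartheta^c$ with $\vartheta>\pi/2$ and thereby upgrades a $c_0$ semi-group to an analytic one. The one genuinely model-specific input is the uniform positivity of $\det M_L$, which forces both $|\rho|<1$ and the restriction $0<\alpha\le\sigma\le\beta<\infty$; without bounding the volatility away from $0$ and $\infty$ the coefficients cease to be totally bounded and uniformly elliptic, and the argument breaks down. This is exactly the degeneracy isolated in the operator $L_0$, to which these classical arguments do not apply.
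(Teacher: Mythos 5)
Your argument is correct and follows the same route as the paper: feed the Garding inequality and invertibility from Corollary \ref{cor.Garding} together with the continuity bound of Corollary \ref{cor.cont} into Lemma \ref{lemma.n.r} and Corollary \ref{cor.analytic}, then check that $L$ (with $I$ bounded and $\alpha>0$) has totally bounded coefficients and uniformly positive principal symbol. You merely spell out two details the paper leaves implicit, namely the spectral shift by $C_2$ before invoking Lemma \ref{lemma.n.r} and the condition $|\rho|<1$ needed for $\det M_L\ge\epsilon>0$.
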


\begin{proof}
 Corollaries \ref{cor.cont} and \ref{cor.Garding} show that the $T$
 satisfies the assumptions of Lemma \ref{lemma.n.r} (that is, $T$ is
 continuous and satisfies a Garding-type inequality). Since $T -
 \mu_0$ is invertible for $\mu_0$ large, again by Corollary
 \ref{cor.Garding}, we are in position to use Corollary
 \ref{cor.analytic} to conclude that $T$ generates an analytic
 semi-group.  If $I$ is bounded, then $L$ has totally bounded
 coefficients. Since $\alpha > 0$, $L$ is also uniformly strongly
 elliptic, and the first part of the result applies.
\end{proof}

\begin{corollary}
 Let $T$ be as in Theorem \ref{thm.gen.L} and $h \in L^2_{\lambda}(I
 \times \RR)$, for some $\lambda \in \RR$. Then $u(t) := e^{tT}h$ is a
 strong solution of $\pa_t u - T u = 0$, $u(0) = h$. It is also a classical
 solution on $(0,\tau]$, for all $\tau > 0$. Moreover, $u(t)$ does not depend on $\lambda$.
\end{corollary}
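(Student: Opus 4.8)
The plan is to establish the three assertions separately, relying on the semi-group properties of $T$ already obtained.

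The strong-solution claim is immediate: by Theorem \ref{thm.gen.L} the operator $T$ generates an analytic semi-group on $L^2_{\lambda}(I\times\RR)$, and by Remark \ref{rem.solutions} (the analytic case, which does not require $h\in D(T)$) the function $u(t):=e^{tT}h$ is then a strong solution of $\pa_t u - Tu = 0$, $u(0)=h$, for every $h\in L^2_{\lambda}(I\times\RR)$.

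For the classical-solution claim I would verify the hypothesis of Proposition \ref{prop.classical.solution}, namely that there exists $n$ with $D(T^n)\ni f\mapsto \pa^{\alpha}f\in\maC(\overline{\Omega})$ continuous for all $|\alpha|\le 2$. The key step is the inclusion $D(T^{n})\subset H^{2n}_{\lambda}(\Omega)$, proved by induction on $n$ using the elliptic regularity Theorem \ref{thm.reg}. The base case $n=1$ is $D(T)=\maK_0\subset H^{2}_{\lambda}$. For the inductive step, if $f\in D(T^{n})$ then $f\in\maK_0$, so $f$ vanishes on $\pa I\times\RR$, while $Tf\in D(T^{n-1})\subset H^{2n-2}_{\lambda}$ by the inductive hypothesis; applying Theorem \ref{thm.reg} with $m=2n-1$ (legitimate since $T$ is uniformly strongly elliptic with totally bounded coefficients) gives $f\in H^{2n}_{\lambda}$. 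Taking $n\ge 2$, so that $2n>2+\tfrac12\dim\Omega=3$, the weighted Sobolev embedding $H^{2n}_{\lambda}(\Omega)\hookrightarrow\maC^{2}(\overline{\Omega})$ supplies the required continuity of $f\mapsto\pa^{\alpha}f$. Proposition \ref{prop.classical.solution} then yields that $u(t)=e^{tT}h$ is a classical solution on $(0,\infty)$, hence on $(0,\tau]$ for every $\tau>0$.

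The independence of $\lambda$ is the part I expect to be the main obstacle, since it compares semi-groups living on different Banach spaces. I would first record that the spaces are nested, $L^2_{\lambda_1}\subset L^2_{\lambda_2}$ when $\lambda_1\le\lambda_2$ (and likewise for the $H^2_{\lambda}$), with continuous inclusions, so it suffices to compare $e^{tT}$ on $L^2_{\lambda_1}$ and on $L^2_{\lambda_2}$ for $h$ in the smaller space. The heart of the argument is resolvent consistency: for $\mu$ large, Corollary \ref{cor.Garding} makes $\mu-T$ invertible on each $L^2_{\lambda}$, and if $g\in L^2_{\lambda_1}$ then $v_1:=(\mu-T)^{-1}g\in H^{2}_{\lambda_1}\subset H^{2}_{\lambda_2}$ solves the same Dirichlet problem as $v_2:=(\mu-T)^{-1}g$ computed in $L^2_{\lambda_2}$; the uniqueness in Corollary \ref{cor.Garding} forces $v_1=v_2$, so the two resolvents agree on $L^2_{\lambda_1}$. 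Since $(\mu-T)^{-1}$ maps $L^2_{\lambda_1}$ into itself, iterating shows the powers $(\mu-T)^{-n}$ agree on $L^2_{\lambda_1}$ as well. I would then recover the semi-group from the resolvent by the exponential formula $e^{tT}h=\lim_{n\to\infty}\big(\tfrac nt(\tfrac nt-T)^{-1}\big)^{n}h$ (valid in each space, with $\mu=n/t$ large, see \cite{PazyBook}): for $h\in L^2_{\lambda_1}$ the pre-limit expressions are literally the same function, and since convergence in either $L^2_{\lambda}$ forces convergence in $L^1_{loc}$, the two limits coincide almost everywhere. As both limits are the classical solutions produced above, they are continuous and therefore equal, which proves that $u(t)$ does not depend on $\lambda$.
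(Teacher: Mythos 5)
Your proposal is correct, and the first two assertions are handled essentially as in the paper: the strong solution comes from analyticity of the semi-group (Remark \ref{rem.solutions}), and the classical solution from the inclusion $D(T^{n})\subset H^{2n}_{\lambda}(I\times\RR)$ (elliptic regularity, Theorem \ref{thm.reg}) together with Sobolev embedding and Proposition \ref{prop.classical.solution}; you merely spell out the induction that the paper leaves implicit. Where you genuinely diverge is the independence of $\lambda$. The paper disposes of it in one line: since $L^{2}_{\lambda'}(I\times\RR)\hookrightarrow L^{2}_{\lambda''}(I\times\RR)$ continuously and injectively for $\lambda'<\lambda''$ (and likewise for the domains $\maK_0$), the function $e^{tT}h$ computed in the smaller space is also a strong solution of the same initial value problem in the larger space, and uniqueness of strong solutions for generators of $c_0$ semi-groups forces the two to coincide. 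You instead establish resolvent consistency across the weighted scale, using the invertibility and uniqueness statements of Corollary \ref{cor.Garding}, and then recover the semi-group from the resolvent via Hille's exponential formula. Both routes are sound. Yours is longer but avoids invoking the abstract uniqueness theorem for strong solutions, relying only on facts already proved in the paper, and the resolvent-consistency step is reusable (it immediately yields agreement of $(\mu-T)^{-n}$, hence of any function of $T$ defined through the resolvent, across the scale of weights); the paper's argument is the shorter, standard one.
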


\begin{proof}
 We have that $T$ generates an analytic semi-group $S(t) = e^{tT}$.
 Moreover, elliptic regularity gives $D(T^{k}) \subset H^{2k}(I \times
 \RR)$, for all $k \in \ZZ_{+}$. The Sobolev embedding theorem then
 gives us that the assumptions of Lemma \ref{lemma.classical.s} and
 Proposition \ref{prop.classical.solution} are satisfied. This proves
 the first part of the result.

 The independence of $u$ on $\lambda$ follows from the fact that the
 map $L^2_{\lambda'}(I \times \RR) \to L^2_{\lambda''}(I \times \RR)$ is
 injective and continuous for all $\lambda' < \lambda''$ and from the
 uniqueness of strong solutions.
\end{proof}

\begin{remark}
 The assumption that $I$ be a bounded interval in the second half
 of Theorem \ref{thm.gen.L}, is essential for our
 method to apply. Our method does not apply, for instance, if $I = (0,
 \infty)$. The problem lies in the fact that, at $\sigma = 0$, we lose
 uniform ellipticity and, at $\sigma = \infty$, the coefficient
 $\theta - \sigma$ becomes unbounded. However, if $\kappa = 0$, we do
 obtain that $L$ generates an analytic semi-group using the results in
 \cite{MazzucatoNistor1}. The degeneracy at $\sigma=0$ and
 $\sigma=\infty$ could be addressed by introducing appropriate weights
 in $\sigma$. For the applications of interest in this work, it is
 enough to consider $\sigma$ in a bounded interval, bounded away from
 zero.
\end{remark}

\subsection{The differential operator $B$}

We now consider the operator $B := \pa_x^2 - \pa_x$ (recall Equation
\ref{def.operators}). The fact that $B$ generates an analytic semigroup is
classical. However, since we work with exponentially weighted spaces,
we state needed results for clarity and completeness. We start by
collecting all the needed technical facts about $B$ in the following
proposition, which we state in more generality than actually needed.
It an be seen as a special case of the analysis of the
operator $L$ (see also Lemma \ref{lemma.conjugation}).

\begin{proposition}\label{prop.dissipative2}
 Let $T = a \pa_x^2 + b \pa_x + c$ be a uniformly strongly elliptic
 operator with totally bounded coefficients. Then:
 \begin{enumerate}[(i)] %[label=(\roman*)]
  \item $wTw^{-1}$ is also strongly elliptic with totally bounded
    coefficients.
  \item There is $C_3 > C_1 > 0$ and $C_2 \in \RR$ such that, for any
    $u \in H^2_{\lambda}(\RR)$,
  \begin{equation*}
    \Re (Tu, u) \, \le \, - C_1\|u\|_{H^{1}_{\lambda}(\RR)}^{2} + C_2
    \|u\|_{L^{2}_{\lambda}(\RR)}^{2}\ \ \mbox{ and } \ \ |(Tu, u)| \,
    \le \, C_3\|u\|_{H^{1}_{\lambda}(\RR)}^{2}\,.
  \end{equation*}
  \item $T- \mu_0 : H^2_{\lambda}(\RR) \to L^{2}_{\lambda}(\RR)$ is
    invertible for $\mu_0 > C_2$.
 \end{enumerate}
\end{proposition}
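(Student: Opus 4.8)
The plan is to prove Proposition \ref{prop.dissipative2} by recognizing it as essentially the one-dimensional analogue of the analysis already carried out for $L$ in Proposition \ref{prop.dissipative} and Corollaries \ref{cor.cont} and \ref{cor.Garding}. The operator $T = a\pa_x^2 + b\pa_x + c$ on $\RR$ is exactly the form to which those results specialize when $\Omega = \RR$ and there is no $\sigma$ variable, with the uniform strong ellipticity condition $a \ge \epsilon$ playing the role of the positive-definiteness of $M_T$. I would therefore proceed in three steps, matching the three parts of the statement.

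First, for part (i), I would invoke Lemma \ref{lemma.conjugation} directly: since $T$ has totally bounded coefficients and $w = e^{\lambda \weight}$, the conjugated operator $wTw^{-1}$ (equivalently $w^{-1}Tw$ after relabeling $\lambda \to -\lambda$) again has totally bounded coefficients and the \emph{same} top-order term $a\pa_x^2$. Since the leading coefficient is unchanged and still satisfies $a \ge \epsilon$, the conjugated operator remains uniformly strongly elliptic in the sense of Definition \ref{def.u.s.e}. By Equation \eqref{eq.conjugation}, this reduction lets me assume $\lambda = 0$ (that is, $w = 1$) throughout the remaining estimates.

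For part (ii), having reduced to $w=1$, I would run the standard one-dimensional energy estimate. Integrating by parts in the quadratic term $-(\pa_x(a\pa_x u), u)$ gives $\int_\RR a|\pa_x u|^2\,dx \ge \epsilon \|\pa_x u\|^2_{L^2}$, using $a \ge \epsilon$; this is the analogue of Equation \eqref{eq.coeff.a}, and here there are no boundary terms because $\RR$ has no boundary, so no vanishing-at-endpoints hypothesis is needed. The first-order term is handled as in Equation \eqref{eq.coeff.b}, producing only a zeroth-order contribution $-\int(\pa_x b)|u|^2$, and the zeroth-order term $c$ contributes a multiple of $\|u\|^2_{L^2}$. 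Collecting these yields the Garding inequality $\Re(Tu,u) \le -C_1\|u\|^2_{H^1_\lambda} + C_2\|u\|^2_{L^2_\lambda}$ with $C_1 = \epsilon$ (up to adjusting constants to convert $\|\pa_x u\|^2_{L^2} + \|u\|^2_{L^2}$ into the full $H^1$ norm), exactly paralleling Corollary \ref{cor.Garding}. The continuity bound $|(Tu,u)| \le C_3\|u\|^2_{H^1_\lambda}$ follows from the same integration by parts, bounding each term by the supremum norms of the (totally bounded) coefficients, as in Corollary \ref{cor.cont}.

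For part (iii), invertibility of $T - \mu_0$ for $\mu_0 > C_2$ follows from the Garding inequality of part (ii) via Lax--Milgram, giving invertibility $H^1_\lambda(\RR) \to H^{-1}_\lambda(\RR)$, upgraded to $H^2_\lambda(\RR) \to L^2_\lambda(\RR)$ by the elliptic regularity Theorem \ref{thm.reg} (with $m=1$, and with no boundary condition to impose since $\Omega = \RR$), just as in the proof of Corollary \ref{cor.Garding}. I do not expect any serious obstacle here: the genuine difficulties in this paper (degeneracy, the solvable Lie algebra structure) are absent for the nondegenerate, uniformly elliptic operator $B$, and the only mild points requiring care are the bookkeeping of constants to match the precise inequalities stated and the observation that the $\RR$ case drops the boundary terms that were present in the $I \times \RR$ analysis. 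The main ``work'' is simply confirming that the earlier machinery specializes cleanly, which is why the proposition is stated as a near-corollary of the $L$-analysis.
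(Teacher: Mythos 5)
Your proposal is correct and follows exactly the route the paper intends: the paper gives no separate proof of Proposition \ref{prop.dissipative2}, stating only that it ``can be seen as a special case of the analysis of the operator $L$ (see also Lemma \ref{lemma.conjugation}),'' and your three steps (conjugation via Lemma \ref{lemma.conjugation}, the one-dimensional energy estimate paralleling Corollaries \ref{cor.cont} and \ref{cor.Garding}, and Lax--Milgram plus elliptic regularity for invertibility) are precisely the specialization being invoked. Your observation that the boundary terms disappear on $\RR$, so no vanishing condition is needed, is a correct and worthwhile point of care.
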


Using the same argument as for Theorem \ref{thm.gen.L},
we obtain the following result.

\begin{theorem}\label{thm.gen.B}
 Let $T$ be as in the statement of Proposition
 \ref{prop.dissipative2}. Then $T$ generates an analytic semi-group on
 $L^2_{\lambda}(\RR)$. In particular, $B$ generates an analytic
 semi-group on $L^2_{\lambda}(\RR)$.
\end{theorem}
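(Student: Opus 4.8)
The plan is to follow verbatim the argument used for Theorem \ref{thm.gen.L}, now on the unbounded domain $\Omega = \RR$, assembling the three parts of Proposition \ref{prop.dissipative2} with Lemma \ref{lemma.n.r} and Corollary \ref{cor.analytic} (equivalently, Theorem \ref{thm.analytic}). The key simplification relative to the $L$ case is that $\RR$ has no boundary, so the natural domain of $T$ is all of $H^2_{\lambda}(\RR)$ and no boundary conditions intervene; in particular the integration-by-parts identities underlying the Garding and continuity estimates of Proposition \ref{prop.dissipative2}(ii) carry no boundary contributions. I would first record that $D(T) = H^2_{\lambda}(\RR)$ is dense in $L^2_{\lambda}(\RR)$, since it contains $C_c^{\infty}(\RR)$.

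Next, to feed Lemma \ref{lemma.n.r}, I note that its hypotheses demand pure dissipativity, whereas the first estimate in Proposition \ref{prop.dissipative2}(ii) carries a zeroth-order term $C_2\|u\|_{L^2_{\lambda}(\RR)}^2$. I would therefore apply the lemma to the shifted operator $T - C_2$: by part (ii), $\Re\big((T - C_2)u, u\big) \le -C_1\|u\|_{H^1_{\lambda}(\RR)}^2$, while the companion bound $|((T - C_2)u,u)| \le C\|u\|_{H^1_{\lambda}(\RR)}^2$ follows from the second inequality of part (ii) together with $|(C_2 u, u)| \le C_2\|u\|_{L^2_{\lambda}}^2 \le C_2 \|u\|_{H^1_{\lambda}}^2$. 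Hence Lemma \ref{lemma.n.r}, applied with $2m = 2$ and $m = 1$, yields $\mathfrak{N}(T - C_2) \subset \Delta_{\vartheta}^{c}$ for some $\vartheta > \pi/2$.

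It remains to supply the invertibility hypothesis. By Proposition \ref{prop.dissipative2}(iii), $T - \mu_0 : H^2_{\lambda}(\RR) \to L^2_{\lambda}(\RR)$ is invertible for every $\mu_0 > C_2$; taking $\mu_0 = C_2 + \lambda$ shows that $(T - C_2) - \lambda$ is invertible for all $\lambda > 0$, which is exactly the remaining assumption of Corollary \ref{cor.analytic}. Applying that corollary to $T - C_2$ gives that $T - C_2$ generates an analytic semi-group on $L^2_{\lambda}(\RR)$, and since a bounded scalar shift multiplies the semi-group by $e^{C_2 t}$ while preserving analyticity, $T$ itself generates an analytic semi-group. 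For the final assertion I would simply verify that $B = \pa_x^2 - \pa_x$ falls under Proposition \ref{prop.dissipative2}: its coefficients $a \equiv 1$, $b \equiv -1$, $c \equiv 0$ are constant, hence totally bounded, and $a \equiv 1$ makes $B$ uniformly strongly elliptic on $\RR$ in the sense of Definition \ref{def.u.s.e}, so the first part of the theorem applies directly.

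I expect no genuine obstacle here, precisely because all of the analytic content has already been isolated in Proposition \ref{prop.dissipative2}. The only point requiring care is the bookkeeping of the $C_2$-shift needed to reconcile the Garding-type inequality of part (ii) with the strict dissipativity demanded by Lemma \ref{lemma.n.r}; once that shift is accounted for, the proof is a routine assembly of the cited results, identical in structure to that of Theorem \ref{thm.gen.L}.
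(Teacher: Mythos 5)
Your proposal is correct and follows essentially the same route as the paper, which proves Theorem \ref{thm.gen.B} by invoking verbatim the argument of Theorem \ref{thm.gen.L}: combine the dissipativity, continuity, and invertibility statements (here packaged in Proposition \ref{prop.dissipative2}) with Lemma \ref{lemma.n.r} and Corollary \ref{cor.analytic}. Your explicit handling of the $C_2$-shift is a detail the paper leaves implicit, and is welcome; the only cosmetic quibble is that you reuse the symbol $\lambda$ both for the weight parameter and for the spectral shift.
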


Since $D(B^{k}) = H^{2k}_{\lambda}(\RR)$, we also obtain the following.

\begin{corollary}\label{cor.HkB}
 The operator $B$ generates an analytic semi-group on $H^{j}_{\lambda}(\RR)$,
 for all $j$.
\end{corollary}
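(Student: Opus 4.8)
The plan is to bootstrap from the case $j=0$, already settled in Theorem \ref{thm.gen.B}, using the identification of $H^{2k}_{\lambda}(\RR)$ with the domain $D(B^{k})$, a general restriction principle for analytic semi-groups, and one interpolation step for the odd indices. As a preliminary I would record the relevant norm equivalence: applying Corollary \ref{cor.reg} to the uniformly strongly elliptic, constant-coefficient operator $B$ on $\RR$ shows that $\|u\|_{L^2_{\lambda}(\RR)} + \|B^{k}u\|_{L^2_{\lambda}(\RR)}$ is an equivalent norm on $H^{2k}_{\lambda}(\RR)$. Thus $D(B^{k}) = H^{2k}_{\lambda}(\RR)$, with the graph norm of $B^{k}$ comparable to the $H^{2k}_{\lambda}$ norm.

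For the even-order spaces I would invoke the standard principle that if an operator $T$ generates an analytic semi-group on a Banach space $X$, then for each $k$ the part of $T$ in $D(T^{k})$, equipped with its graph norm, generates an analytic semi-group on $D(T^{k})$. Applied to $T=B$ and $X=L^2_{\lambda}(\RR)$ this is concrete: $e^{tB}$ commutes with $B^{k}$ on $D(B^{k})$ and hence leaves $H^{2k}_{\lambda}(\RR)$ invariant; strong continuity on $H^{2k}_{\lambda}(\RR)$ follows by applying strong continuity on $L^2_{\lambda}(\RR)$ to $B^{i}u$ for $i\le k$; and the sectorial resolvent estimate for $B$ on $L^2_{\lambda}(\RR)$ restricts to $D(B^{k})$ because $(B-z)^{-1}$ commutes with $B^{k}$. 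This yields that $B$ generates an analytic semi-group on $H^{2k}_{\lambda}(\RR)$ for every $k\ge 0$.

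The odd-order case I would handle by complex interpolation. For $j=2k+1$ I would use the identity $[H^{2k}_{\lambda}(\RR), H^{2k+2}_{\lambda}(\RR)]_{1/2} = H^{2k+1}_{\lambda}(\RR)$, which reduces to the classical interpolation of Sobolev spaces after conjugating by the weight $w = e^{\lambda\weight}$ via Lemma \ref{lemma.conjugation} and Equation \eqref{eq.conjugation}. Since $B$ generates analytic semi-groups on the two endpoint spaces with compatible resolvents, the sectorial resolvent bound passes to the intermediate space, so $B$ generates an analytic semi-group on $H^{2k+1}_{\lambda}(\RR)$ as well, completing all $j$.

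The main obstacle is confirming that \emph{analyticity} (and not merely the $c_0$ property) survives both the passage to the domain $D(B^{k})$ and the interpolation; in each case this reduces to checking that the sectorial resolvent estimate is preserved, which is routine but is the only point requiring genuine care. As an alternative that avoids interpolation altogether, I note that one may instead repeat the energy estimate of Proposition \ref{prop.dissipative2} directly in the $H^{j}_{\lambda}(\RR)$ inner product: because $B$ has constant coefficients, the conjugated operator $w^{-1}Bw$ differs from $B$ only by lower-order terms with totally bounded coefficients, so the commutators $[\pa^{i}, w^{-1}Bw]$ are of lower order and can be absorbed into a term $C_2\|u\|_{H^{j}_{\lambda}(\RR)}^{2}$, giving the Gårding and continuity bounds of Lemma \ref{lemma.n.r} on $H^{j}_{\lambda}(\RR)$ directly for every $j$ and hence analyticity through Corollary \ref{cor.analytic}.
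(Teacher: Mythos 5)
Your proposal is correct and follows essentially the same route as the paper, which justifies the corollary solely by the identification $D(B^{k}) = H^{2k}_{\lambda}(\RR)$ (via Corollary \ref{cor.reg}) together with the standard fact that an analytic semi-group restricts to an analytic semi-group on the domains of powers of its generator; your interpolation step for odd $j$ is the natural completion of that argument, which the paper leaves implicit.
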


Again using the same argument as in the previous subsection, we have
also the following result.

\begin{corollary}
 Let $T$ be as in Proposition \ref{prop.dissipative} and $h \in
 L^2_{\lambda}(\RR)$, for some $\lambda \in \RR$. Then $u(t) :=
 e^{tT}h$ is a strong solution of $\pa_t u - T u = 0$, $u(0) = h$. Moreover, it
is
 a classical solution on any interval $(0,\tau]$, $\tau>0$, and
$u(t)$ does not depend on $\lambda$.
\end{corollary}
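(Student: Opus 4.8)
The plan is to mirror, in the one-dimensional setting $\Omega = \RR$, the argument already used for the corollary following Theorem~\ref{thm.gen.L}. The operator $T = a\pa_x^2 + b\pa_x + c$ is uniformly strongly elliptic with totally bounded coefficients, so by Theorem~\ref{thm.gen.B} it generates an analytic semi-group $e^{tT}$ on $L^2_{\lambda}(\RR)$. Since $F = 0$ and $T$ generates an \emph{analytic} semi-group, Remark~\ref{rem.solutions} immediately gives that $u(t) := e^{tT}h$ is a strong solution of $\pa_t u - Tu = 0$, $u(0) = h$, for every $h \in L^2_{\lambda}(\RR)$. This settles the first assertion.

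For the classical-solution claim I would apply Proposition~\ref{prop.classical.solution} with $P = T$ and $m = 2$. Its hypothesis requires that, for some $n$, the map $D(T^n) \ni f \mapsto \pa^{\alpha} f \in \maC(\overline{\RR})$ be continuous for all $|\alpha| \le 2$. Because $T$ is uniformly strongly elliptic with totally bounded coefficients, Corollary~\ref{cor.reg} identifies $D(T^k)$ with $H^{2k}_{\lambda}(\RR)$, with equivalent norms. Reducing to $\lambda = 0$ via Lemma~\ref{lemma.conjugation} and applying the Sobolev embedding theorem, one sees that for $2k$ large enough (e.g.\ $k = 2$) every $f \in H^{2k}_{\lambda}(\RR)$ has $\pa^{\alpha} f \in \maC(\overline{\RR})$ for $|\alpha| \le 2$, depending continuously on $f$. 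This is exactly the required hypothesis, so Proposition~\ref{prop.classical.solution} yields that $u(t) = e^{tT}h$ is a classical solution on $(0, \infty)$, hence on every $(0, \tau]$, for all $h \in L^2_{\lambda}(\RR)$.

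Finally, for the independence of $u$ on $\lambda$, I would argue exactly as in the $L$ case. For $\lambda' < \lambda''$ the bound $e^{-(\lambda'' - \lambda')\weight} \le 1$ shows that the inclusion $L^2_{\lambda'}(\RR) \hookrightarrow L^2_{\lambda''}(\RR)$ is continuous and injective, and the solution computed in the smaller space remains a strong solution in the larger one. Since strong solutions of $\pa_t u - Tu = 0$ with $u(0) = h$ are unique, the two semi-group orbits must coincide, so $u(t)$ is the same function regardless of the $\lambda$ used to define the ambient space.

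The only point requiring genuine care is this last step: I must verify that the inclusion carries strong solutions to strong solutions, which holds because $T$ acts by the same totally bounded coefficients on both weighted spaces and the embedding is continuous, so that uniqueness in $L^2_{\lambda''}(\RR)$ can legitimately be invoked. The analytic-regularity and Sobolev-embedding inputs are entirely routine once $D(T^k) = H^{2k}_{\lambda}(\RR)$ is in hand; thus I expect the mild subtlety of the $\lambda$-independence argument, rather than any computation, to be the main obstacle.
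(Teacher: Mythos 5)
Your proposal is correct and follows essentially the same route as the paper: the paper disposes of this corollary by the identical argument used for the corresponding corollary after Theorem~\ref{thm.gen.L}, namely analyticity of the semi-group from Theorem~\ref{thm.gen.B}, elliptic regularity giving $D(T^k)\subset H^{2k}_{\lambda}(\RR)$ plus Sobolev embedding to invoke Lemma~\ref{lemma.classical.s} and Proposition~\ref{prop.classical.solution}, and the continuous injection $L^2_{\lambda'}(\RR)\hookrightarrow L^2_{\lambda''}(\RR)$ together with uniqueness of strong solutions for the $\lambda$-independence. You also correctly read the statement's reference to Proposition~\ref{prop.dissipative} as meaning the one-dimensional setting of Proposition~\ref{prop.dissipative2}, which is the intended context.
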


\begin{remark}
 In view of the independence of $\lambda$, we obtain that the
 semi-group $e^{tB}$ is given by the following explicit formula
\begin{equation}\label{eq.exptb}
  e^{tB}h(x) \seq \frac1{\sqrt{4\pi t}}\int e^{-\frac{|x-y-t|^2}{4t}}
  h(y) \, dy \,.
\end{equation}
 In particular, if $\lambda = 0$, the semi-group generated by $B$
 consists of contractions.
\end{remark}

We will need on several occasions the following well-known
lemma. In particular, we will need it to treat families of operators.
(We note here that this lemma will be generalized to deal with
differentiability in the strong sense in Lemma~\ref{lemma.diff}.)

\begin{lemma}\label{lemma.cont}
Let $\xi \in \maC([0, 1]; X)$ and $[0, 1] \ni t \to V(t) \in \maL(X)$
be strongly continuous.  Then the map $[0, 1] \ni t \to V(t) \xi(t)
\in X$ is continuous.
\end{lemma}

In order to apply our results on analytic semi-groups to Equation
\eqref{eq.evolution.form}, we will need to consider families of
operators. In particular, we will show that the operator $P =
\frac{\sigma^{2}}{2} B$, acting on functions of $\sigma$ and $x$,
that appears in the $\lambda$SABR PDE, also generates an analytic
semigroup. If $p : I \to [0, \infty)$
is bounded and continuous,
then we shall write $pB$ for the operator $(pB v)(\sigma) = p(\sigma) B v(\sigma)
\in L^2_{\lambda}(\RR)$ and $e^{pB}$ for the operator $(e^{pB} v)(\sigma) = e^{p(\sigma)B} v(\sigma)
\in L^2_{\lambda}(\RR)$, where $v : I \to H^2_{\lambda}(\RR)$. We thus regard both $pB$ and
$e^{pB}$ as a family of operators parameterized by $\sigma \in I$ and acting on
$L^2_{\lambda}(\RR)$-valued functions defined on $I$.

\begin{proposition}\label{prop.fam.B}
Let $T$ be a differential operator as in Proposition
\ref{prop.dissipative2}.  Let $I \subset \RR$ be an interval and $p :
I \to [0, \infty)$ be a {\em bounded} continuous function. Then $e^{tpT}$,
defined by the
  formula $(e^{tpT}h)(\sigma) := e^{t\,p(\sigma)\,T}h(\sigma) \in L^2_{\lambda}(\RR)$,
  $\sigma \in I$, defines a $c_0$
  semi-group on $L^{2}_{\lambda}(I \times \RR)$ with generator
  $pT$.
\end{proposition}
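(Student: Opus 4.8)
The plan is to view $L^2_{\lambda}(I \times \RR) = L^2(I; L^2_{\lambda}(\RR))$ as a Bochner space and to build $e^{tpT}$ fiberwise over $\sigma \in I$ from the scalar semi-group $e^{sT}$ on $L^2_{\lambda}(\RR)$ produced by Theorem \ref{thm.gen.B}. First I would fix constants $M \ge 1$ and $\omega \in \RR$ with $\|e^{sT}\|_{\maL(L^2_{\lambda}(\RR))} \le M e^{\omega s}$ for $s \ge 0$, set $P_0 := \sup_{\sigma \in I} p(\sigma) < \infty$ and $\omega_+ := \max(\omega, 0)$, and record the uniform bound $\|e^{t p(\sigma) T}\|_{\maL(L^2_{\lambda}(\RR))} \le M e^{\omega_+ t P_0}$, valid for every $\sigma \in I$ because $t p(\sigma) \in [0, t P_0]$. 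Strong measurability of $\sigma \mapsto e^{t p(\sigma) T} h(\sigma)$ follows by composing the strongly continuous map $s \mapsto e^{sT}$ with the continuous function $p$ (this is the mechanism behind Lemma \ref{lemma.cont}) and pairing with the strongly measurable field $h$; together with the uniform bound this shows $e^{tpT} \in \maL(L^2_{\lambda}(I \times \RR))$ with $\|e^{tpT}\| \le M e^{\omega_+ t P_0}$.

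The two algebraic semi-group axioms hold fiberwise, being immediate from $e^{(t_1+t_2)p(\sigma)T} = e^{t_1 p(\sigma)T} e^{t_2 p(\sigma)T}$ and $e^{0} = I$. For strong continuity at $t = 0$ I would argue by dominated convergence: for each $\sigma$ one has $e^{t p(\sigma) T} h(\sigma) \to h(\sigma)$ in $L^2_{\lambda}(\RR)$ as $t \searrow 0$, since $t p(\sigma) \searrow 0$ and $e^{sT}$ is a $c_0$ semi-group, while for $t \in (0, 1]$ the integrand is dominated by $(M e^{\omega_+ P_0} + 1)^2 \|h(\sigma)\|_{L^2_{\lambda}(\RR)}^2$, which is integrable over $I$. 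Hence $\|e^{tpT} h - h\|_{L^2_{\lambda}(I \times \RR)} \to 0$, and $e^{tpT}$ is a $c_0$ semi-group.

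To identify the generator $G$ of $e^{tpT}$ I would compute its resolvent as a Laplace transform and match it fiberwise with that of $pT$. For $\lambda > \omega_+ P_0$ the scalar resolvent bound $\|(\mu - T)^{-1}\| \le M/(\mu - \omega)$ yields the uniform estimate $\|(\lambda - p(\sigma) T)^{-1}\|_{\maL(L^2_{\lambda}(\RR))} \le M/(\lambda - \omega_+ P_0)$ for all $\sigma \in I$ (with the convention that the fiber operator is $\lambda^{-1} I$ where $p(\sigma) = 0$), so $R_{\lambda} h := [\sigma \mapsto (\lambda - p(\sigma) T)^{-1} h(\sigma)]$ is bounded on $L^2_{\lambda}(I \times \RR)$. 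On the fibers with $p(\sigma) > 0$ the substitution $s = t p(\sigma)$ gives $\int_0^{\infty} e^{-\lambda t} e^{t p(\sigma) T}\, dt = (\lambda - p(\sigma) T)^{-1}$, and the same identity holds trivially where $p(\sigma) = 0$; a Fubini argument for the Bochner integral then gives $\int_0^{\infty} e^{-\lambda t} e^{tpT}\, dt = R_{\lambda}$, that is $(\lambda - G)^{-1} = R_{\lambda}$. Since $R_{\lambda}$ also inverts $\lambda - pT$ fiberwise, $\lambda - G = \lambda - pT$ and therefore $G = pT$. In the application $p(\sigma) = \sigma^2/2$ is bounded below on $I = (\alpha, \beta)$ with $\alpha > 0$, so by the elliptic equivalence of norms in Corollary \ref{cor.reg} (with $T$ acting in the $x$ variable) the domain is $D(pT) = H^{0,2}_{\lambda}(I \times \RR)$.

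The hard part will be the uniform-in-$\sigma$ control together with the degeneracy at (or near) $\{p(\sigma) = 0\}$: the change of variables $s = t p(\sigma)$ underlying both the operator bound and the resolvent computation is only literally valid where $p(\sigma) > 0$, so the fibers on which $p$ vanishes must be handled separately, and one must check that the resulting fiber fields genuinely assemble into elements of the Bochner space. Concretely, I expect to need the strong convergence $(\lambda - p(\sigma) T)^{-1} \to \lambda^{-1} I$ as $p(\sigma) \to 0$ (which follows from the uniform resolvent bound and the density of $D(T)$) in order to guarantee strong continuity of the fiber field across the degenerate set, and hence the measurability needed to place $R_\lambda h$ and $e^{tpT} h$ in $L^2_{\lambda}(I \times \RR)$.
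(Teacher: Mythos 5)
Your proof is correct, and its skeleton --- define $e^{tpT}$ fiberwise over $\sigma$, obtain boundedness on $L^{2}_{\lambda}(I\times\RR)=L^2(I;L^2_\lambda(\RR))$ from the uniform bound $\|e^{tp(\sigma)T}\|\le Me^{\omega_+ tP_0}$, and obtain measurability by composing the strongly continuous map $s\mapsto e^{sT}$ with the continuous function $p$ --- is exactly the route the paper takes. The difference is one of completeness: the paper's proof consists of two sentences that establish only the continuity in $\sigma$ of $e^{tp(\sigma)T}h(\sigma)$ for $h$ continuous in $\sigma$ and the resulting operator bound; it does not verify strong continuity at $t=0$, and it does not identify the generator at all, even though the generator claim is the part of the statement used later (e.g.\ in Lemma \ref{lemma.limits}). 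Your dominated-convergence argument for $\lim_{t\searrow 0}e^{tpT}h=h$ and, above all, your Laplace-transform computation $\int_0^\infty e^{-\lambda t}e^{tpT}\,dt=R_\lambda$, with $R_\lambda$ the fiberwise resolvent $(\lambda-p(\sigma)T)^{-1}$ (including the convention $\lambda^{-1}I$ on $\{p=0\}$, the uniform bound $M/(\lambda-\omega_+P_0)$, and the strong continuity of the resolvent field across the degenerate set), supply precisely what the paper leaves implicit; in particular they pin down the generator together with its maximal domain as the range of $R_\lambda$, which the paper never makes precise. For the application at hand the degeneracy issue you flag is moot, since $p(\sigma)=\sigma^2/2\ge\alpha^2/2>0$ on $I=(\alpha,\beta)$, but your treatment covers the general $p\ge 0$ allowed by the statement, so the extra care is warranted rather than wasted.
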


\begin{proof}
 Since $T$ generates a $c_0$ semi-group, $e^{t\,p(\sigma)\, T}h(s)$ depends
 continuously on $\sigma \in I$ whenever $h \in L^{2}_{\lambda}(I \times
 \RR)$ is continuous in $\sigma$.  Since $\|e^{t T}\|$ is uniformly bounded
 for $t$ in a bounded interval, we obtain that the family of operators
 $e^{t\,p(\sigma)\, T}$ thus defines a bounded operator on
 $L^{2}_{\lambda}(I \times \RR)$.
\end{proof}

We shall need the following extension of Lemma \ref{lemma.cont}.

\begin{lemma}\label{lemma.diff}
 Let $J := (0, 1)$ and assume that $\xi \in \maC^1(J; X)$, that $T$ is
 the generator of $c_0$ semi-group $V(t)$ on $X$, and that one of the
 following two conditions is satisfied:
 \begin{enumerate}[(i)] %[label=(\roman*)]
  \item $\xi(t) \in D(T)$ and the map $J \ni t \to T\xi(t) \in X$ is
    continuous;
  \item the semi-group $V(t)$ generated by $T$ is an analytic
    semi-group.
 \end{enumerate}
 Then $V(t) \xi(t) \in \maC^{1}(J; X)$ with differential $TV(t)\xi(t)
 + V(t) \xi'(t)$.
\end{lemma}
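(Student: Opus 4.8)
The plan is to establish a Leibniz (product) rule for the $X$-valued function $g(t) := V(t)\xi(t)$ by analyzing its one-sided difference quotients and then upgrading one-sided differentiability to $\maC^{1}$ regularity. First I would record that $g$ is continuous on $J$: since $V$ is a $c_0$ semi-group it is strongly continuous, and $\xi \in \maC(J;X)$, so Lemma~\ref{lemma.cont} applies directly. Next, for $t \in J$ and $h>0$ small, I would split the right difference quotient using the semi-group property $V(t+h) = V(h)V(t)$:
\[
\frac{g(t+h)-g(t)}{h} \seq V(t+h)\,\frac{\xi(t+h)-\xi(t)}{h} \, + \, \frac{V(h)-I}{h}\,V(t)\xi(t).
\]
For the first term, write $\frac{\xi(t+h)-\xi(t)}{h} = \xi'(t) + o(1)$ in $X$; since $\|V(t+h)\| \le M e^{\omega(t+h)}$ is uniformly bounded for $|h|$ small and $V(t+h)\xi'(t) \to V(t)\xi'(t)$ by strong continuity, this term converges to $V(t)\xi'(t)$. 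For the second term, the definition of the generator (Definition~\ref{def.gen.sgr}) gives convergence to $T\big(V(t)\xi(t)\big)$, provided $V(t)\xi(t) \in D(T)$.

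This membership, together with the continuity of $t \mapsto TV(t)\xi(t)$ needed below, is exactly where the two hypotheses diverge. Under (i) one has $\xi(t) \in D(T)$, so $V(t)\xi(t)\in D(T)$ with $TV(t)\xi(t) = V(t)T\xi(t)$, and since $T\xi \in \maC(J;X)$ by assumption, Lemma~\ref{lemma.cont} shows $t \mapsto TV(t)\xi(t)$ is continuous. Under (ii) the analyticity of $V$ supplies the smoothing property $V(s)X \subset D(T)$ for every $s>0$, so $V(t)\xi(t)\in D(T)$ automatically (this is why $J$ is taken bounded away from $0$); moreover $TV(s)$ extends to a bounded operator with $\|TV(s)\| \le C s^{-1}$ by Remark~\ref{rem.a.est}. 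Fixing $t_0 \in J$ and using the factorization $TV(t) = \big(TV(t_0/2)\big)\,V(t-t_0/2)$ for $t$ near $t_0$, I would then deduce continuity of $t \mapsto TV(t)\xi(t)$ from the boundedness of $TV(t_0/2)$ combined with the continuity of $t \mapsto V(t-t_0/2)\xi(t)$ (Lemma~\ref{lemma.cont} again).

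Having shown that the right derivative of $g$ exists at every $t \in J$ and equals the continuous function $TV(t)\xi(t) + V(t)\xi'(t)$, I would finish via the standard fact that a continuous $X$-valued function whose right derivative exists and is continuous is in fact $\maC^{1}$, with ordinary derivative equal to that right derivative. The cleanest route is a reduction to scalars: for every $\phi \in X^{*}$ the map $\phi\circ g$ is real-differentiable from the right with continuous right derivative, hence $\phi(g(t)) - \phi(g(s)) = \int_s^t \phi\big(TV(r)\xi(r)+V(r)\xi'(r)\big)\,dr$; as this holds for all $\phi$, the Bochner integral identity $g(t)-g(s) = \int_s^t \big(TV\xi + V\xi'\big)\,dr$ follows, and continuity of the integrand yields $g \in \maC^{1}(J;X)$ with the asserted differential. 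The main obstacle is case (ii): lacking the hypothesis $\xi(t)\in D(T)$, one must use both the smoothing property and the quantitative bound $\|TV(s)\|\le Cs^{-1}$ of the analytic semi-group to make sense of $TV(t)\xi(t)$ and to prove its continuity, and it is precisely at this point that the restriction to $J=(0,1)$, bounded away from the origin, is essential.
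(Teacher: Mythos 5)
Your argument is correct; the paper in fact states Lemma~\ref{lemma.diff} without proof, and your write-up supplies exactly the standard argument the authors presumably had in mind: the splitting $V(t+h)\xi(t+h)-V(t)\xi(t)=V(t+h)\big(\xi(t+h)-\xi(t)\big)+\big(V(h)-I\big)V(t)\xi(t)$, the case distinction for why $V(t)\xi(t)\in D(T)$ and why $t\mapsto TV(t)\xi(t)$ is continuous (commutation with $T$ on $D(T)$ in case (i); the smoothing property and the bound $\|TV(s)\|\le Cs^{-1}$ of Remark~\ref{rem.a.est} in case (ii)), and the scalar reduction upgrading a continuous right derivative to $\maC^1$ regularity. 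The only cosmetic point is that $J=(0,1)$ is not literally bounded away from the origin; what case (ii) actually uses is that every $t\in J$ is strictly positive, which is all your factorization $TV(t)=\big(TV(t_0/2)\big)V(t-t_0/2)$ requires.
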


Let $\maK_1 := H^{2}_\lambda(I \times \RR)$, as in Corollary
\ref{cor.sgA} in the previous subsection.

\begin{corollary}\label{cor.C1}
  Let $f : [\alpha , \beta ] = \overline{I} \to [\epsilon, \infty)$,
  $\epsilon > 0$. Assume that $f$, $f'$, and $f''$ are (defined and)
    continuous. Then $e^{fB}$ maps $\maK_1$ to itself. Moreover,
    $e^{tfB}$ defines a $c_0$-semigroup on $\maK_1$, generated by $fB$ as an
operator with domain
\begin{equation*}
 \{\xi \in \maK_1,\, B\xi \in \maK_1 \}  \,
 \supset \, H^{2,4}_{\lambda}(I \times \RR)
 \seq H^2(I; H^{4}_{\lambda}(\RR)) \,.
\end{equation*}
\end{corollary}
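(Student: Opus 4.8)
The plan is to establish Corollary~\ref{cor.C1} by reducing everything to the already-proven machinery for the single operator $B$ on $\RR$ (Theorem~\ref{thm.gen.B}, Corollary~\ref{cor.HkB}) together with the family-of-operators framework of Proposition~\ref{prop.fam.B} and the differentiability result Lemma~\ref{lemma.diff}. The key point is that $f$ depends only on $\sigma$ while $B$ acts only in $x$, so $e^{fB}$ is a multiplication operator in $\sigma$ with values in the bounded operators on the $x$-variable spaces. First I would record that, since $f \ge \epsilon > 0$ and $f, f', f''$ are continuous on the compact interval $\overline{I}$, the function $f$ is bounded; thus Proposition~\ref{prop.fam.B} (applied with $p = f$) immediately gives that $e^{tfB}$ is a $c_0$-semigroup on $L^2_\lambda(I \times \RR)$ with generator $fB$. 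The task of the corollary is to upgrade this statement from $L^2_\lambda(I\times\RR)$ to the space $\maK_1 = H^{2}_\lambda(I\times\RR) = H^2(I; H^0_\lambda(\RR)) \cap H^0(I; H^2_\lambda(\RR))$ with the mixed norm of \eqref{eq.mixed.Sobolev}.

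\medskip

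The main work is to show $e^{fB}$ maps $\maK_1$ to itself boundedly, and here the obstacle is controlling the two $\sigma$-derivatives, since differentiating in $\sigma$ hits the factor $f(\sigma)$ inside the exponential and produces extra factors of $B$. Writing $\xi = \xi(\sigma) \in H^2_\lambda(\RR)$, I would differentiate the map $\sigma \mapsto e^{f(\sigma)B}\xi(\sigma)$ using Lemma~\ref{lemma.diff}, whose hypothesis~(ii) is available because $B$ generates an \emph{analytic} semigroup on $H^j_\lambda(\RR)$ for every $j$ by Corollary~\ref{cor.HkB}. Formally,
\begin{equation*}
  \pa_\sigma \big( e^{f(\sigma)B}\xi(\sigma)\big)
  = f'(\sigma)\, B\, e^{f(\sigma)B}\xi(\sigma)
  + e^{f(\sigma)B}\, \xi'(\sigma),
\end{equation*}
and a second differentiation produces terms involving $f''(\sigma) B e^{f(\sigma)B}\xi$, $f'(\sigma)^2 B^2 e^{f(\sigma)B}\xi$, $f'(\sigma) B e^{f(\sigma)B}\xi'$, and $e^{f(\sigma)B}\xi''$. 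The crucial estimate is the analytic-semigroup regularization bound of Remark~\ref{rem.a.est}: because $B$ generates an analytic semigroup on each $H^j_\lambda(\RR)$, the operators $B^n e^{f(\sigma)B}$ are bounded with $\|B^n e^{f(\sigma)B}\| \le C (f(\sigma))^{-n} \le C\epsilon^{-n}$, uniformly for $\sigma \in \overline{I}$ since $f \ge \epsilon$. Thus each of the potentially dangerous terms carrying $B$ or $B^2$ is in fact bounded in the $H^0_\lambda(\RR)$- (or $H^2_\lambda(\RR)$-) norm by a constant times $\|\xi\|_{H^2_\lambda(\RR)} + \|\xi'\|_{H^2_\lambda(\RR)} + \|\xi''\|_{H^2_\lambda(\RR)}$, with constants depending on $\epsilon$ and on $\sup|f'|, \sup|f''|$. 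Integrating these pointwise-in-$\sigma$ bounds over the bounded interval $I$ yields $\|e^{fB}\xi\|_{\maK_1} \le C \|\xi\|_{\maK_1}$, so $e^{fB}$ preserves $\maK_1$; applying the same bound to the time-parameter version $e^{tfB}$ with $t$ replaced by $tf$ (still bounded away from $0$ for $t$ in a compact subinterval, though one must be slightly careful as $t \searrow 0$) gives uniform boundedness on $\maK_1$.

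\medskip

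Once boundedness on $\maK_1$ is in hand, the $c_0$-semigroup properties on $\maK_1$ (the group law and $e^{0}=I$) are inherited directly from the $L^2_\lambda$-statement of Proposition~\ref{prop.fam.B}, since $\maK_1 \hookrightarrow L^2_\lambda(I\times\RR)$ continuously; strong continuity at $t=0$ on $\maK_1$ follows from applying the strong continuity of $e^{tB}$ on $H^j_\lambda(\RR)$ (Corollary~\ref{cor.HkB}) pointwise in $\sigma$ together with a dominated-convergence argument using the uniform bound $\|e^{tfB}\|_{\maL(\maK_1)} \le C$ for $t \in [0,1]$. Finally, to identify the generator and its domain, I would note that for $\xi \in H^{2,4}_\lambda(I\times\RR) = H^2(I; H^4_\lambda(\RR))$ one has $B\xi \in H^2(I; H^2_\lambda(\RR)) = \maK_1$, so $\tfrac{1}{t}(e^{tfB}\xi - \xi) \to fB\xi$ in the $\maK_1$-norm by the same analytic-semigroup estimates applied to the difference quotient; this shows $H^{2,4}_\lambda(I\times\RR)$ is contained in the domain, while the description $\{\xi \in \maK_1 : B\xi \in \maK_1\}$ of the full domain follows from the general theory since the generator on $L^2_\lambda$ is $fB$ and elements of $\maK_1$ on which $fB$ (equivalently $B$, as $f \ge \epsilon$) lands again in $\maK_1$ are precisely those in the domain of the restricted semigroup. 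I expect the main obstacle to be the careful bookkeeping of the $\sigma$-derivative terms and the justification of differentiation under the $\sigma$-integral via Lemma~\ref{lemma.diff}, all of which rest on the uniform-in-$\sigma$ analyticity estimate $\|B^n e^{f(\sigma)B}\| \le C\epsilon^{-n}$ made possible by the hypothesis $f \ge \epsilon > 0$.
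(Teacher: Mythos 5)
Your proposal is correct and follows essentially the same route as the paper, whose proof simply cites Lemma~\ref{lemma.diff}(ii), Remark~\ref{rem.a.est}, and Corollary~\ref{cor.HkB} --- precisely the three ingredients you deploy, with the key mechanism being that $f \ge \epsilon > 0$ turns the analytic-semigroup bound $\|B^{n}e^{sB}\| \le C s^{-n}$ into the uniform estimate $\|B^{n}e^{f(\sigma)B}\| \le C\epsilon^{-n}$ that absorbs the factors of $B$ produced by differentiating in $\sigma$. Your explicit bookkeeping of the $\sigma$-derivative terms (and your flag that the $t$-factors from the chain rule cancel the $t^{-n}$ blow-up as $t \searrow 0$) is exactly the detail the paper leaves implicit; only the passing claim that the bound requires $\|\xi''\|_{H^{2}_{\lambda}(\RR)}$ is an overstatement, since each term in fact needs only the $\maK_1$-norm of $\xi$.
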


\begin{proof}
 The first part is an immediate consequence of Lemma \ref{lemma.diff}(ii)
 and of Remark \ref{rem.a.est}. The second part follows using also
 Corollary \ref{cor.HkB}.
\end{proof}

\section{The semi-group generated by $L_0$\label{sec4}}

In this section, we discuss the derivation of an explicit formula for
the distributional kernel of the operator $e^{t L_0}$ using Lie
algebra techniques. Besides being of independent interest, in this
work we utilize the explicit formula for $e^{t L_0}$ to approximate
$e^{t L}$, for which no closed form are available. This is achieved by
means of a perturbative expansion in the parameter $\nu$, the
so-called volvol or volatility of the volatility.
We recall that $L_0 = A +\frac{\sigma^2}{2}\, B$ and $L = L_0 + \nu
L_1 + \nu^2 L_2$, with $L_i$ independent of $\nu$ (see Equations
\eqref{eq.decomp.L} and \eqref{def.operators}).

There is an added difficulty in our problem,
namely, the fact that $L_0$ is not strongly elliptic, and
$\pa_t-L_0$ is not hypoelliptic in the sense of H\"ormander
\cite{Hormanderbook} (although $L_0$ is). As a matter of fact, this
expansion is only valid under additional regularity assumptions on the
initial data $h$, which will be discussed in Section \ref{sec5}.

The explicit formula for $e^{t L-0}$ is derived from the corresponding
formulas for $e^{tA}$ and $e^{\frac{t \sigma^2}{2} B}$, where the
later is defined using Proposition \ref{prop.fam.B}. Our approach can
be viewed as akin to an operator splitting argument, where the
hyperbolic and parabolic parts of $L_0$ are treated separately,
although we do not explicitly resort to any splitting in the PDE
itself.

{\em We thus assume that $I = (\alpha, \beta)$ satisfies $0 < \alpha <
  \theta < \beta < \infty$, as in Proposition \ref{prop.fam.B}.  We
  will make the further assumption that $\kappa > 0$.}

This last assumption implies that the characteristics of the operator
$A$ are incoming at $\sigma = \alpha$ and $\sigma =\beta$, as
long as $\alpha < \theta < \beta$ and $\kappa > 0$.  Therefore, no
boundary conditions need to be imposed at $\sigma=\alpha$ and $\sigma
=\beta$ (cf. the seminal
work of Feller \cite{Feller51,Feller52}). The case $\kappa < 0$ is
similar provided one imposes suitable boundary conditions. However,
this case will not be needed for our purposes.

We now study $e^{tA}$ and its properties. These will be used in
deriving an explicit formula for $e^{tL_0}$.

\subsection{The transport equation}
Let $I = (\alpha, \beta) \subset \RR$ and $A := \kappa (\theta -
\sigma) \pa_{\sigma}$, as before. We consider the transport equation
\begin{equation}\label{eq.for.a}
 \pa_t v - A v \, = \, 0 \,,
\end{equation}
where $v$ depends on $\sigma$ and, possibly, on some parameters.
 Let
\begin{equation}\label{eq.def.delta}
 \delta_t(\sigma) \ede \theta(1-e^{-\kappa t}) + \sigma e^{-\kappa
   t}\,,
\end{equation}
which satisfies $\delta_t(I) \subset I$, by our assumptions on $I$,
and $\delta_{t}\circ\delta_s = \delta_{t+s}$.

Most of the results listed below are classical, at least for
$\lambda=0$. We state and prove results in the form needed for our
purposes for clarity and completeness.

\begin{lemma}\label{lemma.hyperbolic}
Let $h \in L_{loc}^1(I)$, and let $v$ be given by the formula
\begin{equation}\label{eq.expta}
 v(t, \sigma) \ := \ h(\delta_t(\sigma))\,.
\end{equation}
Then $v$ is a weak solution of \eqref{eq.for.a} on $[0, \infty) \times
  \RR$ with $v(0) = h$ (i.e. $v(0, \sigma) = h(\sigma)$).  If $h \in
  \maC^{1}(I)$, then $v$ is also a classical solution this equation.
\end{lemma}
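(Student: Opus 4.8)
The plan is to treat the classical-solution claim first by a direct chain-rule computation, then deduce the weak identity for $\maC^1$ data by integration by parts, and finally extend to arbitrary $h \in L^1_{loc}(I)$ by a density argument. Throughout I work on the natural domain $[0,\infty) \times I$, on which $\delta_t$ and hence $v$ are defined.

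\textbf{Classical case.} Suppose $h \in \maC^1(I)$. Writing $\delta_t(\sigma) = \theta + (\sigma - \theta)e^{-\kappa t}$ shows that $\delta_t$ is jointly smooth in $(t,\sigma)$ and maps $I$ into $I$, so $v(t,\sigma) = h(\delta_t(\sigma))$ is $\maC^1$ on $[0,\infty) \times I$. I would compute $\pa_t \delta_t = \kappa(\theta - \sigma)e^{-\kappa t}$ and $\pa_\sigma \delta_t = e^{-\kappa t}$, whence by the chain rule $\pa_t v = h'(\delta_t)\,\kappa(\theta - \sigma)e^{-\kappa t} = \kappa(\theta - \sigma)\pa_\sigma v = A v$, that is, $\pa_t v - A v = 0$ pointwise on $(0,\infty) \times I$. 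Since $\delta_0 = \mathrm{id}$ gives $v(0,\sigma) = h(\sigma)$, and the continuity requirements of Definition \ref{def.classical.s} are immediate from $h \in \maC^1$ and the smoothness of $\delta_t$, this establishes the classical-solution claim.

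\textbf{Weak identity for $\maC^1$ data.} For $\phi \in \maC_c^\infty([0,T) \times I)$ I would integrate the identity $\int_0^T \int_I (\pa_t v - A v)\phi\,d\sigma\,dt = 0$ by parts. Integrating the $\pa_t$-term in $t$ produces the boundary contribution $-\int_I v(0,\sigma)\phi(0,\sigma)\,d\sigma = -\int_I h(\sigma)\phi(0,\sigma)\,d\sigma$, the $t = T$ term vanishing because $\phi$ has compact support in $[0,T)$; integrating the $A$-term in $\sigma$ produces $\int_I v\,A^t\phi\,d\sigma$ with $A^t\phi = -\pa_\sigma\big(\kappa(\theta - \sigma)\phi\big)$ and no boundary term, since $\phi$ is compactly supported in $I$. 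Collecting terms reproduces exactly the weak identity \eqref{eq.weak.s} with $F = 0$, so every $\maC^1$ datum gives a weak solution.

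\textbf{Density argument.} For general $h \in L^1_{loc}(I)$ choose $h_n \in \maC^1(I)$ with $h_n \to h$ in $L^1_{loc}(I)$. The identity \eqref{eq.weak.s} is linear and continuous in the data, so it suffices to pass to the limit in its two terms. The term $\int_I \phi(0,\sigma)h(\sigma)\,d\sigma$ is continuous in $h$ over the compact $\sigma$-support of $\phi$. For the spacetime term set $v_n(t,\sigma) := h_n(\delta_t(\sigma))$, fix a compact $K \subset I$ and $T' < T$ containing the support of $\phi$, and apply the affine change of variable $\tau = \delta_t(\sigma)$, whose Jacobian is $\pa_\sigma \delta_t = e^{-\kappa t}$; this gives $\int_0^{T'}\!\int_K |v_n - v|\,d\sigma\,dt \le \kappa^{-1}(e^{\kappa T'} - 1)\,\|h_n - h\|_{L^1(K')}$, where $K' := \{\,\delta_t(\sigma) : t \in [0,T'],\ \sigma \in K\,\}$ is the continuous image of a compact set, hence a compact subset of $I$ by $\delta_t(I) \subset I$. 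Thus $v_n \to v$ in $L^1_{loc}$; the same estimate with $h_n$ replaced by $0$ shows $v \in L^1_{loc}([0,\infty) \times I)$; and since $\pa_t\phi + A^t\phi$ is bounded with compact support, the spacetime integral converges. Passing to the limit in the identity valid for each $h_n$ yields \eqref{eq.weak.s} for $h$.

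The main obstacle is this last step: one must guarantee that the flow images $\delta_t(K)$ stay inside a single compact subset $K'$ of $I$ uniformly for $t$ in a bounded interval, so that $L^1_{loc}$ convergence of the data transfers, after the change of variables, to $L^1_{loc}$ convergence of the $v_n$. This is precisely where the hypotheses $\delta_t(I) \subset I$ and the contraction toward $\theta \in I$ (coming from $\kappa > 0$) enter; by contrast, the classical case is a one-line chain-rule verification.
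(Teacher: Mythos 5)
Your proof is correct, and for the weak-solution claim it takes a genuinely different route from the paper. The paper disposes of the classical case exactly as you do (direct chain-rule calculation), but for general $h \in L^1_{loc}(I)$ it does \emph{not} regularize the data: it performs the change of coordinates $(t,\sigma) = (t, \delta_{-t}(s))$ in the weak identity \eqref{eq.weak.s} itself, so that $v(t,\sigma) = h(\delta_t(\sigma))$ becomes the $t$-independent function $h(s)$, and then the integrand $(\pa_t\phi + A^t\phi)\,v$ collapses (up to the Jacobian $e^{\kappa t}$) to $h(s)$ times an exact $t$-derivative of the transported test function; a single integration by parts in the straightened variable, justified via Fubini, then produces the boundary term $-\int \phi(0,s)h(s)\,ds$ directly. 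Your argument instead proves the identity for $\maC^1$ data by the standard integration by parts and transfers it to $L^1_{loc}$ data by mollification, with the change of variables $\tau = \delta_t(\sigma)$ appearing only in the stability estimate $\|v_n - v\|_{L^1([0,T']\times K)} \le \kappa^{-1}(e^{\kappa T'}-1)\|h_n - h\|_{L^1(K')}$. Both are sound; the paper's method is shorter once one accepts the coordinate change and avoids any approximation of the data, while yours is more elementary and makes explicit the one point that must be checked in either approach, namely that $\{\delta_t(\sigma): t\in[0,T'],\ \sigma\in K\}$ is a compact subset of $I$ (which is where $\kappa>0$ and $\theta\in I$ enter). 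Your observation that $v \in L^1_{loc}$ follows from the same estimate, which is needed to make sense of Definition \ref{def.weak.s}, is a detail the paper leaves implicit.
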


\begin{proof}
 The proof that $v$ is a classical solution if $h \in \maC^{1}(I)$ is
 by a direct calculation. To prove that $v$ is a weak solution in
 general, we can consider the change of coordinates $(t, \sigma) = (t,
 \delta_{-t}(s))$ and then perform an integration by parts in $s$,
 using also Fubini's theorem.
\end{proof}

In what follows, we consider $A$ as operator acting of functions of
$\sigma$ with values in a Hilbert space $\maH$. For the application at
hand, $\maH$ will be an exponentially weighted Sobolev space.
The following proposition justified.

\begin{proposition}\label{prop.sgA} Let $\maH$ be a Hilbert space.
 Let $T(t)h = v(t)$, where $v$ is as in Lemma \ref{lemma.hyperbolic}
 and $h \in L^2(I; \maH)$. Then $\|T(t)h\| \le e^{\kappa t/2} \|h\|$,
 where the norm is the one on $L^2(I; \maH)$. Moreover, $T(t)$ is a
 $c_0$ semi-group whose generator coincides with $A$ on $\maC^1(I;
 \maH)$.
\end{proposition}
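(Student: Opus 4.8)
The plan is to prove the three assertions in order: first the norm bound $\|T(t)h\| \le e^{\kappa t/2}\|h\|$, then the semi-group properties, and finally the identification of the generator with $A$ on $\maC^1(I;\maH)$.

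For the norm estimate, the key idea is to use the explicit change of variables induced by the flow $\delta_t$. Since $(T(t)h)(\sigma) = h(\delta_t(\sigma))$, I would compute
\begin{equation*}
 \|T(t)h\|^2 \seq \int_I \|h(\delta_t(\sigma))\|_{\maH}^2 \, d\sigma \seq \int_{\delta_t(I)} \|h(s)\|_{\maH}^2 \, |(\delta_t^{-1})'(s)| \, ds\,,
\end{equation*}
substituting $s = \delta_t(\sigma)$. From the explicit formula \eqref{eq.def.delta} one has $\delta_t'(\sigma) = e^{-\kappa t}$, so the inverse map $\delta_{-t}$ has derivative $e^{\kappa t}$, and since $\delta_t(I) \subset I$ the domain of integration only shrinks. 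This yields $\|T(t)h\|^2 \le e^{\kappa t}\|h\|^2$, hence $\|T(t)h\| \le e^{\kappa t/2}\|h\|$. The constant $e^{\kappa t/2}$ arises precisely from the Jacobian of the contracting flow toward $\theta$, and the inclusion $\delta_t(I)\subset I$ (guaranteed by $0<\alpha<\theta<\beta$ and $\kappa>0$) is what makes the estimate clean.

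For the semi-group properties, properties (ii) and (iii) of the definition are straightforward: $T(0) = I$ because $\delta_0 = \mathrm{id}$, and the composition law $T(t_1)T(t_2) = T(t_1+t_2)$ follows directly from the flow identity $\delta_{t}\circ\delta_s = \delta_{t+s}$ already recorded after \eqref{eq.def.delta}. The strong continuity $\lim_{t\to 0}T(t)h = h$ I would establish first on the dense subspace of continuous (or $\maC^1$) functions, where it is immediate from continuity of $h$ and of $\delta_t(\sigma)$ in $t$, and then extend to all of $L^2(I;\maH)$ using the uniform bound $\|T(t)\|\le e^{\kappa t/2}$ together with a standard $\epsilon/3$ density argument.

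For the generator, I would take $h \in \maC^1(I;\maH)$ and compute the strong limit $\lim_{t\searrow 0} t^{-1}(T(t)h - h)$ directly. By Lemma \ref{lemma.hyperbolic}, $v(t,\sigma) = h(\delta_t(\sigma))$ is a classical solution of $\pa_t v - Av = 0$, so $\pa_t v(t,\sigma)\big|_{t=0} = (Ah)(\sigma) = \kappa(\theta-\sigma)h'(\sigma)$ pointwise; the task is to upgrade this pointwise derivative to convergence in the $L^2(I;\maH)$ norm. The main obstacle, and the step deserving the most care, is exactly this upgrade: one must show that the difference quotient $t^{-1}(h(\delta_t(\sigma))-h(\sigma))$ converges to $Ah$ not just pointwise but in $L^2$. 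For $h\in\maC^1(I;\maH)$ on the bounded interval $I$, this follows from uniform continuity of $h'$ and the mean value theorem applied along the flow, controlling the difference quotient uniformly in $\sigma$ and invoking boundedness of $I$ to pass from the uniform estimate to the $L^2$ estimate; thus $A$ agrees with the generator on $\maC^1(I;\maH)$, which is the claimed statement.
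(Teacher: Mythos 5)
Your proof is correct and follows essentially the same route as the paper: the bound $\|T(t)h\|\le e^{\kappa t/2}\|h\|$ via the change of variables $s=\delta_t(\sigma)$ with Jacobian $e^{\kappa t}$ and the inclusion $\delta_t(I)\subset I$, the semi-group law from $\delta_t\circ\delta_s=\delta_{t+s}$, the generator identified on $\maC^1(I;\maH)$ by a direct difference-quotient computation, and strong continuity on all of $L^2(I;\maH)$ by density plus the uniform bound on $\|T(t)\|$. The paper is merely terser (and obtains strong continuity as a byproduct of the generator limit on the dense subspace rather than by a separate $\epsilon/3$ argument), so there is nothing substantive to flag; the only cosmetic caveat is that for $\maH$-valued functions the mean value \emph{equality} should be replaced by the integral form of the Taylor remainder, which does not affect your argument.
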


\begin{proof} The relation $\|T(t)h\| \le e^{\kappa t/2} \|h\|$ follows by
a change of variables (note also that, for $I = \RR$, we have
equality).  The identity $T(t_1)T(t_2)h=T(t_1+t_2)h$ follows from
$\delta_{t_2}(\delta_{t_1}(\sigma)) = \delta_{t_1+t_2}(\sigma)$. If $h
\in \maC^1(I; \maH)$, we obtain from the definition that $t^{-1}(T(t)
h - h) \to A h$. Since $\|T(t)\|$ is uniformly bounded for $t \le 1$,
this gives that $T(t)h \to h$ as $t \to 0$ for all $h$. This completes
the proof.
\end{proof}

Below, we shall write $T(t) = e^{tA}$, a notation that is justified by
Corollary \ref{cor.sgA}.

\begin{corollary}\label{cor.sgA.new}
 Using the notation of Proposition \ref{prop.sgA}, we have that $v$ is
 a strong solution of Equation \eqref{eq.for.a} for $h \in \maC^{1}(I;
 L^2_{\lambda}(\RR))$.  If $h \in \maC^{1}(I; H^1_{\lambda}(\RR))$, it
 is also a classical solution.
\end{corollary}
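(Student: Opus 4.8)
The plan is to obtain the strong-solution claim abstractly from the semi-group constructed in Proposition \ref{prop.sgA}, and the classical-solution claim from the explicit representation $v(t,\sigma) = h(\delta_t(\sigma))$ together with a one-dimensional Sobolev embedding. For the strong solution, Proposition \ref{prop.sgA}, applied with $\maH = L^2_\lambda(\RR)$, shows that $A$ generates the $c_0$ semi-group $T(t) = e^{tA}$ on $X := L^2(I; L^2_\lambda(\RR)) = L^2_\lambda(I\times\RR)$ and that its generator coincides with $A = \kappa(\theta-\sigma)\pa_\sigma$ on $\maC^1(I;\maH)$. Hence every $h \in \maC^1(I; L^2_\lambda(\RR))$ lies in the domain of the generator, and Remark \ref{rem.solutions} applies directly: since $A$ generates a $c_0$ semi-group and $h$ belongs to its domain, $v(t) = e^{tA}h = T(t)h$ is a strong solution of $\pa_t v - Av = 0$ with $v(0)=h$. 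No further estimate is needed, as the four conditions of Definition \ref{def.strong.s} are precisely the standard regularity of $t\mapsto e^{tA}h$ for data in the domain of the generator.

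For the classical solution I would argue from the formula $v(t,\sigma) = h(\delta_t(\sigma))$, now with $h \in \maC^1(I; H^1_\lambda(\RR))$. Since $\delta_t(\sigma) = \theta(1-e^{-\kappa t}) + \sigma e^{-\kappa t}$ is smooth in $(t,\sigma)$ and satisfies $\delta_t(I)\subset I$, the chain rule gives
\[
 \pa_t v = \kappa e^{-\kappa t}(\theta-\sigma)\,(\pa_\sigma h)(\delta_t(\sigma)),
 \qquad
 \pa_\sigma v = e^{-\kappa t}\,(\pa_\sigma h)(\delta_t(\sigma)),
\]
as identities in $H^1_\lambda(\RR)$, and these verify $\pa_t v - \kappa(\theta-\sigma)\pa_\sigma v = 0$ termwise. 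To convert this into a pointwise statement I would invoke the embedding $H^1_\lambda(\RR)\hookrightarrow C(\RR)$ (via the substitution $f = e^{\lambda\weight}g$ with $g\in H^1(\RR)$ and the one-dimensional Sobolev inequality), which upgrades the $\maC^1(I; H^1_\lambda(\RR))$-regularity of $h$ to joint continuity of $(\sigma,x)\mapsto h(\sigma)(x)$ and of $(\sigma,x)\mapsto (\pa_\sigma h)(\sigma)(x)$. Composing with the smooth map $\delta_t$ then yields continuity of $v$, $\pa_t v$, and $\pa_\sigma v$ on $(0,\infty)\times\Omega$, so that $v$ is a classical solution of the transport equation in the sense of Definition \ref{def.classical.s}.

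The semi-group bookkeeping and the chain-rule computation are routine; the step that needs care is the passage from Bochner-space regularity to genuine pointwise joint continuity. I expect this to be the main technical point: one must verify that the embedding $H^1_\lambda(\RR)\hookrightarrow C(\RR)$ is compatible with the $\maC^1$-dependence on the parameter $\sigma$, so that the evaluation $(\sigma,x)\mapsto h(\sigma)(x)$ is continuous on $I\times\RR$ rather than merely measurable. Because $A$ carries no $x$-derivative, the variable $x$ enters only as a spectator parameter, so continuity in $x$ is all that is required and the exponential weight plays no essential role once the embedding is set up.
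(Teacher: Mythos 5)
Your proof is correct and takes essentially the same route as the paper's, which simply cites Lemma \ref{lemma.hyperbolic}, Proposition \ref{prop.sgA}, and the definitions of strong and classical solutions. You have merely made explicit the details the paper leaves implicit: domain membership of $h$ via Proposition \ref{prop.sgA} for the strong-solution claim, and the chain rule together with the embedding $H^1_\lambda(\RR)\hookrightarrow C(\RR)$ to upgrade the Bochner-valued regularity to pointwise joint continuity for the classical-solution claim.
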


\begin{proof}
This follows from Lemma \ref{lemma.hyperbolic}, Proposition
\ref{prop.sgA} and the definitions of strong and classical solutions.
\end{proof}

We obtain the following consequences for classical solutions of
Equation \eqref{eq.for.a}.

\begin{corollary}\label{cor.sgA}
Let $\maK_1 \ede H^{2}_\lambda(I \times \RR)$.  Then $\maK_1 \subset
\maC^1(I; L^2_{\lambda}(\RR))$.  Using the notation of Proposition
\ref{prop.sgA}, we have that $v$ is a strong solution of Equation
\eqref{eq.for.a} for $h \in \maK_1$.  Moreover, $e^{tA}(\maK_1)
\subset \maK_1$, $e^{tA}$ defines a $c_0$ semi-group on $\maK_1$, and
hence $v(t) \in \maK_1$.
\end{corollary}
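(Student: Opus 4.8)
The plan is to prove the four assertions in order, reducing each to the \emph{affine} structure of $\delta_t$ from \eqref{eq.def.delta}: since $\delta_t(\sigma) = \theta(1-e^{-\kappa t}) + \sigma e^{-\kappa t}$ is affine in $\sigma$, we have $\delta_t'(\sigma) = e^{-\kappa t}$ (constant) and $\delta_t'' = 0$, together with $\delta_t(I)\subset I$ and the fact that the weight $e^{\lambda\weight}$ does not depend on $\sigma$. I first establish the embedding $\maK_1 \subset \maC^1(I; L^2_\lambda(\RR))$. Because the weight is $\sigma$-independent we identify $L^2_\lambda(I\times\RR) = L^2(I; L^2_\lambda(\RR))$; keeping only the $\sigma$-derivatives in the definition of $H^2_\lambda(I\times\RR)$ shows that $u$, $\pa_\sigma u$, and $\pa_\sigma^2 u$ all lie in $L^2(I; L^2_\lambda(\RR))$, i.e. $\maK_1 \subset H^2(I; L^2_\lambda(\RR))$. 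As $I$ is a bounded interval, the vector-valued Sobolev embedding $H^2(I; \maH) \hookrightarrow \maC^1(\overline I; \maH)$ with $\maH = L^2_\lambda(\RR)$ gives the inclusion. The strong-solution statement is then immediate: by this inclusion any $h \in \maK_1$ satisfies the hypothesis of Corollary \ref{cor.sgA.new}, so $v$ from \eqref{eq.expta} is a strong solution of \eqref{eq.for.a}.

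Next I would show $e^{tA}(\maK_1)\subset\maK_1$ with a bound uniform for $t$ in bounded intervals. Since $\delta_t$ is affine and acts only in $\sigma$, differentiating $v(t,\sigma,x) = h(\delta_t(\sigma),x)$ yields the clean identity
\[
 \pa_\sigma^\alpha \pa_x^\beta v(t,\sigma,x) \seq e^{-\alpha\kappa t}\, (\pa_\sigma^\alpha \pa_x^\beta h)(\delta_t(\sigma),x)\,, \qquad \alpha+\beta\le 2\,,
\]
with \emph{no} lower-order terms, precisely because $\delta_t''=0$. Changing variables $s=\delta_t(\sigma)$ in the $\sigma$-integral (Jacobian $e^{-\kappa t}$, image $\delta_t(I)\subset I$) and pulling the $\sigma$-independent weight through, each term is estimated by
\[
 \|\pa_\sigma^\alpha \pa_x^\beta v(t)\|_{L^2_\lambda(I\times\RR)}^2 \seq e^{(1-2\alpha)\kappa t}\int_{\delta_t(I)\times\RR} |(\pa_\sigma^\alpha \pa_x^\beta h)(s,x)|^2 e^{-2\lambda\weight}\, ds\, dx \,\le\, C(t)\,\|\pa_\sigma^\alpha \pa_x^\beta h\|_{L^2_\lambda(I\times\RR)}^2\,,
\]
with $C(t)$ bounded on bounded $t$-intervals since $\kappa>0$. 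Summing over $\alpha+\beta\le 2$ gives $v(t)\in H^2_\lambda(I\times\RR)=\maK_1$ and $\|e^{tA}\|_{\maL(\maK_1)}\le C$, uniformly for $t$ bounded; this is also the $v(t)\in\maK_1$ claim.

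Finally I would verify that $e^{tA}$ is a $c_0$ semi-group on $\maK_1$. The laws $e^{(t_1+t_2)A}=e^{t_1A}e^{t_2A}$ and $e^{0A}=I$ hold pointwise in $(\sigma,x)$ from $\delta_{t_1}\circ\delta_{t_2}=\delta_{t_1+t_2}$ and $\delta_0=\mathrm{id}$, exactly as in Proposition \ref{prop.sgA}, and they persist on $\maK_1$ by the invariance just shown. For strong continuity I would combine the uniform bound of the previous paragraph with a density ($\varepsilon/3$) argument: it suffices to check $\|e^{tA}h-h\|_{\maK_1}\to 0$ on a dense subspace. Using the displayed derivative identity, the top-order difference is
\[
 \pa_\sigma^\alpha \pa_x^\beta v(t) - \pa_\sigma^\alpha \pa_x^\beta h \seq e^{-\alpha\kappa t}\big[(\pa_\sigma^\alpha \pa_x^\beta h)\circ\delta_t - \pa_\sigma^\alpha \pa_x^\beta h\big] + (e^{-\alpha\kappa t}-1)\,\pa_\sigma^\alpha \pa_x^\beta h\,,
\]
whose second summand tends to $0$ in $L^2_\lambda$ since $e^{-\alpha\kappa t}\to 1$, while the first tends to $0$ by continuity of $t\mapsto g\circ\delta_t$ in $L^2(I; L^2_\lambda(\RR))$, itself a consequence of the uniform convergence $\delta_t\to\mathrm{id}$ on $\overline I$ and the density of continuous functions. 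I expect this last point — top-order ($\alpha+\beta=2$) continuity in the $H^2$ topology — to be the only genuinely delicate step, the lower orders and the algebraic laws being immediate. Assembling invariance, the semigroup laws, and strong continuity yields the assertion.
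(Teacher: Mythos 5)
Your proposal is correct and follows essentially the same route as the paper, which dispatches the embedding via the (vector-valued) Sobolev embedding theorem, the strong-solution claim via Corollary \ref{cor.sgA.new}, and the invariance and $c_0$ property on $\maK_1$ ``from the explicit formula for $e^{tA}$.'' You have simply written out in full the computation with the affine map $\delta_t$ (the derivative identity, the change of variables, and the strong continuity of $g \mapsto g\circ\delta_t$) that the paper leaves implicit.
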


\begin{proof}
The inclusion $\maK_1 \subset \maC^1(I; L^2_{\lambda}(\RR))$ is a
consequence of the Sobolev's embedding theorem. The fact that $v$ is a
strong solution follows from Corollary \ref{cor.sgA.new}, and from the
inclusion $\maK_1 \subset \maC^1(I; L^2_{\lambda}(\RR))$.  The
inclusion $e^{tA} (\maK_1) \subset \maK_1$ and the fact that $e^{tA}$
defines a $c_0$ semi-group on $\maK_1$ follow from the explicit
formula for $e^{tA}$.
\end{proof}

\subsection{The generation property for $L_0$}

It seems difficult to apply the Lumer-Philips Theorem directly to a degenerate
operator of the form of $L_0$. We will therefore adopt a different
strategy and directly prove that $e^{tL_0}$ is a semigroup generated
by $L_0$.

Let $\maK_1 := H^{2}_\lambda(I \times \RR)$, as in Corollary
\ref{cor.sgA} in the previous subsection.
Also, we recall the function $ \delta_t(\sigma) \ede \theta(1-e^{-\kappa
  t}) + \sigma e^{-\kappa t}$ introduced above.

\begin{lemma}\label{lemma.cross}
 Let $g : I \to [0, \infty)$ be a continuous function.  Assume that
   either $g$ is bounded or that the parameter $\lambda = 0$ in
     the definition of the weight $w(x) = e^{\lambda \langle x
       \rangle}$. Then $e^{tA} e^{gB} = e^{(g \circ \delta_t) B}
   e^{tA}$.
\end{lemma}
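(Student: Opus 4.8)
The plan is to prove the intertwining identity $e^{tA} e^{gB} = e^{(g \circ \delta_t) B} e^{tA}$ as an identity of operators acting on $\maK_1 = H^2_\lambda(I \times \RR)$ by exploiting the explicit formula for $e^{tA}$ from Lemma \ref{lemma.hyperbolic}, namely $(e^{tA}u)(\sigma, x) = u(\delta_t(\sigma), x)$, together with the fact that $e^{gB}$ acts as a $\sigma$-parameterized family of $x$-operators, $(e^{gB}u)(\sigma, \cdot) = e^{g(\sigma)B}u(\sigma, \cdot)$ (Proposition \ref{prop.fam.B}). The key observation is that $e^{tA}$ is essentially a substitution in the $\sigma$ variable while $e^{gB}$ acts only in $x$ with a coefficient depending on $\sigma$; the composition simply relocates where that coefficient is evaluated.

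First I would compute the left-hand side directly. For $u \in \maK_1$, applying $e^{gB}$ first gives a function whose $\sigma$-fiber is $e^{g(\sigma)B}u(\sigma, \cdot)$; then applying $e^{tA}$ substitutes $\sigma \mapsto \delta_t(\sigma)$, yielding
\begin{equation*}
 \big(e^{tA} e^{gB} u\big)(\sigma, \cdot) \seq e^{g(\delta_t(\sigma))B}\, u(\delta_t(\sigma), \cdot)\,.
\end{equation*}
Next I would compute the right-hand side: applying $e^{tA}$ first gives $u(\delta_t(\sigma), \cdot)$, and then applying $e^{(g\circ\delta_t)B}$ acts in $x$ with coefficient $(g\circ\delta_t)(\sigma) = g(\delta_t(\sigma))$, producing
\begin{equation*}
 \big(e^{(g\circ\delta_t)B} e^{tA} u\big)(\sigma, \cdot) \seq e^{g(\delta_t(\sigma))B}\, u(\delta_t(\sigma), \cdot)\,,
\end{equation*}
which is manifestly the same expression. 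The two pointwise-in-$\sigma$ formulas agree, so the operators coincide.

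The main points requiring care are not algebraic but functional-analytic: I must check that both sides are well-defined bounded operators on $\maK_1$ and that the pointwise fiberwise manipulations above are legitimate. This is exactly where the hypotheses enter. The operator $e^{(g\circ\delta_t)B}$ is governed by Corollary \ref{cor.C1} and Proposition \ref{prop.fam.B}: since $\delta_t(I) \subset I$ and $g \circ \delta_t$ is continuous and nonnegative, $e^{(g\circ\delta_t)B}$ is a legitimate $c_0$-semigroup family, bounded on $L^2_\lambda(I \times \RR)$. The dichotomy in the hypothesis—\emph{either} $g$ bounded \emph{or} $\lambda = 0$—is precisely the dichotomy needed for Proposition \ref{prop.fam.B} (and the contraction formula \eqref{eq.exptb} when $\lambda = 0$) to ensure uniform boundedness of the $x$-semigroup even when the coefficient is unbounded; the anticipated obstacle is handling the case of unbounded $g$, which forces the restriction $\lambda = 0$ so that $e^{g(\sigma)B}$ is a contraction uniformly in $\sigma$ (whereas for $\lambda \neq 0$ one needs boundedness of $g$ to control the operator norms). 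Finally, to rigorously justify passing from the dense continuous-in-$\sigma$ functions (where the fiber formulas are evident) to all of $\maK_1$, I would invoke the continuity of both composite operators together with Lemma \ref{lemma.cont}, which guarantees that the maps $\sigma \mapsto e^{g(\delta_t(\sigma))B}u(\delta_t(\sigma),\cdot)$ are genuinely continuous $L^2_\lambda(\RR)$-valued functions, so the identity extends from a dense set to all of $\maK_1$ by density and boundedness.
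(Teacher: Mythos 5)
Your proof is correct and follows essentially the same route as the paper: the paper's entire argument is the one-line fiberwise identity $e^{(g \circ \delta_t) B} (\xi \circ \delta_t) = (e^{g B} \xi) \circ \delta_t = e^{tA} e^{gB} \xi$, which is exactly your computation that $e^{tA}$ is substitution by $\delta_t$ in $\sigma$ while $e^{gB}$ acts fiberwise in $x$ with a $\sigma$-dependent coefficient. Your additional remarks on where the boundedness/$\lambda=0$ dichotomy enters are accurate but go beyond what the paper records.
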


\begin{proof}
 The result follows from $e^{(g \circ \delta_t) B} \xi \circ \delta_t =
 (e^{g B} \xi) \circ \delta_t = e^{tA} e^{gB} \xi.$
\end{proof}

Let us now define
\begin{equation}\label{eq.def.Df}
    \Df_\kappa(t) \, = \, \Df_\kappa (t, \sigma) \ede
    \frac{(\theta-\sigma)^2}{4\kappa}(1-e^{-2\kappa t}) -
    \frac{\theta(\theta-\sigma)}{\kappa}(1-e^{-\kappa t}) + \frac12
    \theta^2t \,.
\end{equation}

\begin{proposition} \label{prop.Df}
The function $\Df(t,\sigma)$ defined in Equation \eqref{eq.def.Df} is
analytic in $(\kappa, t, \sigma) \in \RR^3$ and satisfies
$\Df(0,\sigma) = 0$ and $\Df(t,\sigma) > 0$ for any $t > 0$ and
any $\sigma \in \RR$.
\end{proposition}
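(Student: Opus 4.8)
The plan is to recognize $\Df_\kappa$ as the time-integral of $\tfrac12\delta_\tau^2$, which reduces all three assertions to elementary observations. Concretely, I would first record the identity
\[
 \Df_\kappa(t,\sigma) \seq \frac12\int_0^t \delta_\tau(\sigma)^2\, d\tau \,, \qquad \delta_\tau(\sigma) = \theta + (\sigma-\theta)e^{-\kappa\tau}\,,
\]
with $\delta_\tau$ as in Equation \eqref{eq.def.delta}. This is verified by a direct computation: expand $\delta_\tau(\sigma)^2 = \theta^2 + 2\theta(\sigma-\theta)e^{-\kappa\tau} + (\sigma-\theta)^2 e^{-2\kappa\tau}$ and integrate term by term, using $\int_0^t e^{-\kappa\tau}\,d\tau = (1-e^{-\kappa t})/\kappa$ and $\int_0^t e^{-2\kappa\tau}\,d\tau = (1-e^{-2\kappa t})/(2\kappa)$; matching the three resulting terms with \eqref{eq.def.Df}, and using $(\theta-\sigma)^2 = (\sigma-\theta)^2$, gives the claim.

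Given this representation, $\Df_\kappa(0,\sigma) = 0$ is immediate, since the integral over a degenerate interval vanishes (equivalently, each term in \eqref{eq.def.Df} carries a factor vanishing at $t=0$). Positivity for $t>0$ is equally transparent: the integrand $\tfrac12\delta_\tau(\sigma)^2$ is continuous and nonnegative, and it is not identically zero on $(0,t)$. Indeed, since $\theta > 0$ and $\kappa > 0$ we have $\delta_\tau(\sigma) \to \theta > 0$ as $\tau \to \infty$, so $\delta_\tau(\sigma)$ is a nonzero real-analytic function of $\tau$; alternatively, $\delta_\tau(\sigma) = 0$ forces $e^{-\kappa\tau} = \theta/(\theta-\sigma)$, which has at most one root $\tau$ by strict monotonicity of $\tau \mapsto e^{-\kappa\tau}$. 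Either way $\delta_\tau(\sigma)^2 > 0$ for all but at most one $\tau \in (0,t)$, and hence the integral is strictly positive.

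For analyticity I would argue separately, since the only issue is the apparent pole at $\kappa = 0$ in \eqref{eq.def.Df}. The point is that
\[
 \frac{1-e^{-\kappa t}}{\kappa} = \sum_{n\geq 1}\frac{(-1)^{n+1}\kappa^{n-1}t^n}{n!}\,, \qquad \frac{1-e^{-2\kappa t}}{\kappa} = \sum_{n\geq 1}\frac{(-1)^{n+1}2^{n}\kappa^{n-1}t^n}{n!}\,,
\]
each extend to entire functions of $(\kappa,t) \in \CC^2$, the singularity at $\kappa=0$ being removable. Thus $\Df_\kappa(t,\sigma)$ is a polynomial in $\sigma$ whose coefficients are entire in $(\kappa,t)$, hence the restriction of an entire function to $\RR^3$, and in particular real-analytic there. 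One can also read this off the integral representation, since $\tfrac12\delta_\tau(\sigma)^2$ is jointly analytic in $(\kappa,\tau,\sigma)$ and integration in $\tau$ over $[0,t]$ preserves analyticity in the remaining parameters and in the upper limit $t$.

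The one genuinely substantive step is spotting the representation $\Df_\kappa = \tfrac12\int_0^t\delta_\tau^2\,d\tau$; once it is in hand each assertion is routine, and the hypothesis $\theta>0$ (together with $\kappa>0$) is exactly what excludes the degenerate case $\delta_\tau\equiv 0$ that would break strict positivity. I expect no real obstacle beyond this observation. The only point requiring care is not to overstate positivity in the excluded regime $\kappa=0$, $\sigma=0$, where $\Df_\kappa$ collapses to $\tfrac12\sigma^2 t$ and vanishes; this is consistent with the standing assumption $\kappa>0$ of this section.
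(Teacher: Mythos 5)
Your proof is correct, and it takes a genuinely different route from the paper's. The paper works directly with the closed form \eqref{eq.def.Df}: it regards $\Df(t,\cdot)$ as a quadratic polynomial in $\sigma$, notes that the leading coefficient is positive for $t>0$, and then shows that the discriminant $f(t)$ is strictly negative for $t>0$ by an elementary second-derivative argument ($f(0)=f'(0)=0$ and $f''(t)<0$ for $t\neq 0$). Your identity $\Df_\kappa(t,\sigma)=\tfrac12\int_0^t\delta_\tau(\sigma)^2\,d\tau$ (which I have checked; the term-by-term match with \eqref{eq.def.Df} is exact) replaces that computation by the observation that a continuous, nonnegative integrand vanishing at no more than one point of $(0,t)$ has strictly positive integral, and it handles analyticity and $\Df(0,\sigma)=0$ equally cleanly. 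What your representation buys is the structural explanation of why $\Df\ge0$: it is the Duhamel accumulation of the conjugates $e^{\tau A}\tfrac{\sigma^{2}}{2}e^{-\tau A}=\tfrac{\delta_\tau(\sigma)^2}{2}$, exactly the heuristic of Remark \ref{rem.formula}, and as a by-product it yields immediately both the cocycle identity $\Df(t)+\Df(s)\circ\delta_t=\Df(t+s)$ invoked in Lemma \ref{lemma.S(t)} and the limit $\Df(t)/t\to\sigma^2/2$ of Lemma \ref{lemma.unif.limit}, which the paper verifies separately; it also applies verbatim to $\Cf$ (replace $\kappa$ by $-\kappa$). What the paper's discriminant computation buys is that it stays entirely inside the explicit formula, with no identity to discover. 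Both arguments tacitly require $\kappa\neq0$ — the paper divides by $\kappa$ and $\kappa^2$, and in your version strict positivity genuinely fails at $\kappa=0$, $\sigma=0$, where $\Df=\tfrac12\sigma^2t$ — and you are right, and appropriately careful, to flag that this degenerate case is excluded by the standing assumption $\kappa>0$.
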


\begin{proof}
The function $\Df(t,\sigma)$ is analytic since the singularity at zero
is removable.  We shall regard $\Df(t,\sigma)$ as a second order
polynomial in $\sigma$ with coefficients that are
functions of the parameters $t$ and $\kappa$.  We have that the
leading coefficient $\frac1{4\kappa}(e^{2\kappa t}-1)$ is always
positive as $t>0$, so we only need to show that the discriminant of
$\Df(t, \sigma)$ is non negative. We let $f(t)$ be the discriminant of
$\Df(t, \sigma)$ (regarded as a second-order polynomial in $\sigma$, as
mentioned above), so that
\begin{equation}
  f(t) \, = \, \frac{\theta^2}{2\kappa^2} \big [ (2 +\kappa
    t)e^{-2\kappa t} - 4e^{-\kappa t} + 2-\kappa t \big ] \;.
\end{equation}
We then have:
\begin{equation*}
\begin{gathered}
 f'(t) \, = \, \frac{\theta^2}{2\kappa} \big [ (3-2\kappa t)e^{2\kappa
     t} - 4e^{\kappa t} + 1 \big ] \quad \mbox{and} \\
  f''(t) \, = \, 2\theta^2 \big [ (1-\kappa t)e^{2\kappa t} -
    e^{\kappa t} \big ] \, = \, 2\theta^2 e^{2\kappa t} \big [
    1-\kappa t - e^{-\kappa t} \big ] \, < \, 0 \ \mbox{ for } t \neq
  0 \;.
\end{gathered}
\end{equation*}
It follows that $f'(t)$ is decreasing, and hence $f'(t)<f'(0)=0$ for
$t > 0$. Consequently, $f(t)$ is also decreasing, which gives
$f(t)<f(0)=0$ for positive $t$.
\end{proof}

This lemma allows us to define $e^{\Df(t) B}$ if $I$ is bounded or if
$\lambda = 0$. We let then
\begin{equation}\label{eq.def.S(t)}
 S(t) \ede e^{\Df(t) B} e^{tA}\,.
\end{equation}
Then $S(t)$ is a bounded operator, since it is the composition of bounded
operators.

We will establish that $S(t)$ is a $co$-semigroup generated by $L_0$
by splitting the proof in a few Lemmas for convenience.

\begin{lemma}\label{lemma.S(t)}
For all $t, \, s\geq0$, the family of operators $S(t)$ satisfies:
\begin{enumerate}
\item $S(t)S(s) = S(t+s)$;
\item $S(t)\, \maK_1 \subset \maK_1$.
\end{enumerate}
\end{lemma}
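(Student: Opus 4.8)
The plan is to verify the two properties directly from the explicit formula $S(t) = e^{\Df(t)B}e^{tA}$, using the commutation relation from Lemma~\ref{lemma.cross} for part (1) and the mapping properties already established for $e^{tA}$ and $e^{fB}$ for part (2). Both properties should reduce to routine manipulations of the constituent semi-groups, but the semi-group identity in part (1) requires a genuine algebraic identity about the function $\Df$ that I must verify.

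For part (1), the natural approach is to compute
\begin{equation*}
 S(t)S(s) \seq e^{\Df(t)B} e^{tA} e^{\Df(s)B} e^{sA}\,.
\end{equation*}
Lemma~\ref{lemma.cross} lets me move $e^{tA}$ past $e^{\Df(s)B}$ at the cost of precomposing $\Df(s)$ with $\delta_t$, giving
\begin{equation*}
 e^{tA} e^{\Df(s)B} \seq e^{(\Df(s)\circ \delta_t)B} e^{tA}\,.
\end{equation*}
Since the heat semi-groups $e^{pB}$ for varying scalar coefficients $p$ commute with each other (they are all functions of the same generator $B$, applied fiberwise in $\sigma$), the two $B$-exponentials combine additively. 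Using $e^{tA}e^{sA} = e^{(t+s)A}$ from Corollary~\ref{cor.sgA}, I then obtain
\begin{equation*}
 S(t)S(s) \seq e^{(\Df(t) + \Df(s)\circ\delta_t)B} e^{(t+s)A}\,.
\end{equation*}
The identity $S(t)S(s) = S(t+s) = e^{\Df(t+s)B}e^{(t+s)A}$ therefore follows once I establish the pointwise functional identity
\begin{equation*}
 \Df(t,\sigma) + \Df\big(s, \delta_t(\sigma)\big) \seq \Df(t+s, \sigma)\,.
\end{equation*}
This is the crux of the argument, and I expect it to be the main obstacle: it is a concrete identity to be checked by substituting the definitions of $\Df$ from Equation~\eqref{eq.def.Df} and $\delta_t$ from Equation~\eqref{eq.def.delta}, expanding the exponentials $e^{-\kappa t}$, $e^{-\kappa s}$, and $e^{-\kappa(t+s)}$, and verifying that all terms match. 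It is essentially the cocycle condition encoding that $\Df(t,\sigma) = \int_0^t \frac{\delta_\tau(\sigma)^2}{2}\, d\tau$ (up to the normalization in the heat coefficient), so that the additivity is just the splitting of this integral at time $t$ combined with the flow property $\delta_s \circ \delta_t = \delta_{s+t}$.

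For part (2), the inclusion $S(t)\,\maK_1 \subset \maK_1$ follows by composing the two mapping properties already in hand. Corollary~\ref{cor.sgA} gives $e^{tA}(\maK_1) \subset \maK_1$, so it remains to see that $e^{\Df(t)B}$ maps $\maK_1$ into itself. This is exactly the content of Corollary~\ref{cor.C1}, applied with $f = \Df(t,\cdot)$ as a function of $\sigma$: by Proposition~\ref{prop.Df} the function $\Df(t,\sigma)$ is analytic in $\sigma$ and strictly positive for $t > 0$ (and vanishes identically at $t=0$, for which $S(0)=I$ trivially preserves $\maK_1$), so on the bounded closed interval $\overline{I}$ it is bounded below by some $\epsilon > 0$ with $f$, $f'$, $f''$ all continuous. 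Hence $e^{\Df(t)B}$ preserves $\maK_1 = H^2_\lambda(I\times\RR)$, and the composition $S(t) = e^{\Df(t)B}e^{tA}$ does as well. This part should be immediate given the earlier results; the only care needed is the positivity and regularity of $\Df(t,\cdot)$ on $\overline I$, which Proposition~\ref{prop.Df} supplies.
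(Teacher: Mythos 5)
Your proof is correct and follows essentially the same route as the paper: commute $e^{tA}$ past $e^{\Df(s)B}$ via Lemma \ref{lemma.cross}, reduce part (1) to the cocycle identity $\Df(t) + \Df(s)\circ\delta_t = \Df(t+s)$, and obtain part (2) from Corollaries \ref{cor.sgA} and \ref{cor.C1}. Your observation that $\Df(t,\sigma) = \int_0^t \frac12 \delta_\tau(\sigma)^2\, d\tau$, which makes the cocycle identity transparent via the flow property of $\delta_t$, is a worthwhile refinement of the paper's bare ``easy to check by direct calculation,'' as is your separate treatment of $t=0$ where the lower bound hypothesis of Corollary \ref{cor.C1} fails.
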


\begin{proof}
We first notice that $\Df(t) + \Df(s) \circ \delta_t = \Df(t+s)$,
which is easy to check by direct calculation.  By definition, using
also Lemma \ref{lemma.cross}, we have
\begin{multline}
 S(t)S(s) = e^{\Df(t) B} e^{tA} e^{\Df(s) B} e^{sA} = e^{\Df(t) B}
 e^{(\Df(s)\circ \delta_t) B} e^{tA} e^{sA}\\
 = e^{(\Df(t) + \Df(s)\circ \delta_t) B} e^{(t+s)A} = e^{\Df(t+ s) B}
 e^{(t+s)A} = S(t+s).
\end{multline}
This calculation completes the proof of the first part. The last part
  follows from Corollaries \ref{cor.sgA} and \ref{cor.C1}.
\end{proof}

We recall that we assume $\sigma$ is in a bounded interval $I\subset
(0,\infty)$.

\begin{lemma}\label{lemma.unif.limit} We have that for all $j \ge 0$,
 \begin{equation*}
  \|\pa_\sigma^j\Df(t)/t - \sigma^2/2\|_{L^\infty(I)} \to 0 \quad \mbox{as } t \to
  0 \,, \ \ t > 0\,.
 \end{equation*}
\end{lemma}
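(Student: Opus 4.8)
The plan is to exploit the fact that, for each fixed $t>0$, $\Df(t)/t$ is a polynomial of degree at most two in $\sigma$ whose coefficients depend on $t$ (and $\kappa$) but not on $\sigma$. First I would note that $\sigma^2/2$ is itself a degree-two polynomial in $\sigma$, so $\pa_\sigma^j(\Df(t)/t - \sigma^2/2) \equiv 0$ for every $j \ge 3$ and the claim is vacuous in that range. It therefore suffices to treat $j \in \{0,1,2\}$, for which I can proceed by direct computation.

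Next I would rewrite the defining formula \eqref{eq.def.Df} by dividing by $t$ and introducing the auxiliary scalar functions $\phi(t) := (1-e^{-2\kappa t})/t$ and $\psi(t) := (1-e^{-\kappa t})/t$, which extend analytically across $t=0$ (removable singularities) with $\phi(0)=2\kappa$ and $\psi(0)=\kappa$. This yields the clean expression
\[
 \Df(t)/t \seq \frac{(\theta-\sigma)^2}{4\kappa}\,\phi(t) \,-\, \frac{\theta(\theta-\sigma)}{\kappa}\,\psi(t) \,+\, \frac12\theta^2 \,.
\]
Since $\phi$ and $\psi$ are independent of $\sigma$, each $\sigma$-derivative of this expression is again a polynomial in $(\theta-\sigma)$ whose coefficients are fixed linear combinations of $\phi(t)$ and $\psi(t)$.

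The one computation I would carry out carefully is that the pointwise $t\to 0$ limit is exactly $\sigma^2/2$: substituting $\phi(t)\to 2\kappa$ and $\psi(t)\to\kappa$ gives $\frac{(\theta-\sigma)^2}{2} - \theta(\theta-\sigma) + \frac12\theta^2 = \frac{\sigma^2}{2}$ (equivalently $\pa_t\Df(0,\sigma)=\sigma^2/2$). After this algebraic cancellation, the $t$-independent part drops out of $\pa_\sigma^j(\Df(t)/t-\sigma^2/2)$ for each $j\le 2$, so that every such derivative is a finite sum of terms of the form $(\text{polynomial in }\sigma)\cdot(\phi(t)-2\kappa)$ and $(\text{polynomial in }\sigma)\cdot(\psi(t)-\kappa)$.

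Finally I would pass to the $L^\infty(I)$ norm, and here the hypothesis that $I$ is bounded is precisely what is needed: on the compact closure $\overline{I}$ the factors $(\theta-\sigma)$ and $(\theta-\sigma)^2$ are uniformly bounded, so each term is dominated by a constant times $|\phi(t)-2\kappa|$ or $|\psi(t)-\kappa|$, both of which tend to $0$ as $t\to 0^+$. This gives the asserted uniform convergence. The step I would flag as the actual content — rather than a genuine obstacle — is the cancellation verifying that the $t$-independent part equals $\sigma^2/2$; everything else is routine, and the boundedness of $I$ (which fails for $I=(0,\infty)$) is exactly what upgrades pointwise to uniform convergence.
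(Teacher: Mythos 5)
Your proof is correct and follows essentially the same route as the paper's: both arguments rest on the removable singularity of $\Df(t)/t$ at $t=0$ together with the boundedness of $I$. Your version is in fact more explicit than the paper's one-line compactness argument (continuous extension of $\pa_\sigma^j\Df(t)/t$ to $\overline{I}\times[0,1]$), since you separate the $t$-dependence into the $\sigma$-independent factors $\phi,\psi$ and verify directly the cancellation giving the limit $\sigma^2/2$, which the paper leaves implicit.
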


\begin{proof}
We observe that the function $\pa_\sigma^j\Df(t)/t$, defined on $I \times (0, 1]$,
 extends to a continuous function on $\overline{I} \times [0,
   1]$. Since $I$ is a bounded interval, this fact is enough to provide the result.
\end{proof}

\begin{lemma}\label{lemma.limits}
The following limits in $\maH$ hold:
\begin{enumerate}[(i)]
\item $\lim_{t \searrow 0}S(t) \xi = \xi$ for all
$\xi \in \maH$ and, similarly,
\item  $\lim_{t \searrow 0}t^{-1} (S(t) \xi -
 \xi) = L_0 \xi$ for all $\xi \in \maK_1$.
\end{enumerate}
\end{lemma}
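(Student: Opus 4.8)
The plan is to work in the Hilbert space $\maH = L^2_\lambda(I\times\RR) = L^2(I; L^2_\lambda(\RR))$, on which $e^{tA}$ is the transport $c_0$-semigroup of Proposition \ref{prop.sgA} and $e^{\Df(t)B}$ is the fiberwise operator $\sigma \mapsto e^{\Df(t,\sigma)B}$ defined via Proposition \ref{prop.fam.B}. Three facts will be used repeatedly: first, $e^{tA}$ is strongly continuous with $e^{tA}\xi \to \xi$; second, $\|e^{sB}\|_{\maL(L^2_\lambda(\RR))} \le M$ for $s$ in a bounded set (contractions when $\lambda = 0$, and $e^{cs}$-bounded in general since $B$ generates an analytic semi-group), so that $\|e^{\Df(t)B}\| \le M$ for small $t$ because $I$ is bounded; and third, the commutation identity $B e^{tA} = e^{tA} B$, which holds because $B$ differentiates only in $x$ while $e^{tA}$ substitutes $\delta_t(\sigma)$ for $\sigma$.

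For part (i), I would split
$$S(t)\xi - \xi \seq e^{\Df(t)B}\big(e^{tA}\xi - \xi\big) + \big(e^{\Df(t)B}\xi - \xi\big).$$
The first term is bounded by $M\|e^{tA}\xi - \xi\|$, which tends to $0$ by strong continuity of $e^{tA}$. For the second term I would argue by dominated convergence on $I$: for a.e. $\sigma$ one has $\|e^{\Df(t,\sigma)B}\xi(\sigma) - \xi(\sigma)\|_{L^2_\lambda(\RR)} \to 0$, since $\Df(t,\sigma) \to 0$ (Proposition \ref{prop.Df}) and $e^{sB}$ is strongly continuous, while the integrand is dominated by $(M+1)^2\|\xi(\sigma)\|^2$, which is integrable. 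Hence $e^{\Df(t)B}\xi \to \xi$ and (i) follows.

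For part (ii), take $\xi \in \maK_1$ and write
$$\frac{S(t)\xi - \xi}{t} \seq \frac{(e^{\Df(t)B}-I)e^{tA}\xi}{t} + \frac{e^{tA}\xi - \xi}{t}.$$
The second summand converges to $A\xi$: indeed $\maK_1 \subset \maC^1(I; L^2_\lambda(\RR))$ by Sobolev embedding (Corollary \ref{cor.sgA}), and the generator of $e^{tA}$ agrees with $A$ on $\maC^1(I; L^2_\lambda(\RR))$ by Proposition \ref{prop.sgA}. For the first summand, abbreviate $\eta_t := e^{tA}\xi \in \maK_1$ and compare it with $\tfrac{\Df(t)}{t} B\eta_t$. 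Using the identity $e^{sB}\zeta - \zeta - sB\zeta = \int_0^s (e^{rB}-I)B\zeta\,dr$ fiberwise with $s = \Df(t,\sigma)$ and $\zeta = \eta_t(\sigma)$, the difference $\tfrac1t(e^{\Df(t)B}-I)\eta_t - \tfrac{\Df(t)}{t}B\eta_t$ equals $\tfrac1t\int_0^{\Df(t,\sigma)}(e^{rB}-I)B\eta_t(\sigma)\,dr$. Since $\Df(t,\sigma) \le Ct$ for small $t$ (Lemma \ref{lemma.unif.limit}), its $\maH$-norm is controlled by $C\rho_t(\sigma)$ with $\rho_t(\sigma) := \sup_{0\le r\le Ct}\|(e^{rB}-I)B\eta_t(\sigma)\|$; and since $B\eta_t = e^{tA}B\xi$ by the commutation identity, I would bound $\rho_t(\sigma) \le \sup_{r\le Ct}\|(e^{rB}-I)(B\xi)(\sigma)\| + 2M\|(B\xi)(\delta_t(\sigma)) - (B\xi)(\sigma)\|$ and integrate over $I$. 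The second piece yields $\le 2M\|e^{tA}B\xi - B\xi\|_{\maH} \to 0$, and the first tends to $0$ by dominated convergence (pointwise by strong continuity of $e^{sB}$, dominated by $(2M)^2\|(B\xi)(\sigma)\|^2$).

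It then remains to show $\tfrac{\Df(t)}{t}B\eta_t \to \tfrac{\sigma^2}{2}B\xi$, which follows from Lemma \ref{lemma.unif.limit} with $j = 0$, giving $\Df(t)/t \to \sigma^2/2$ uniformly on $I$, together with $B\eta_t = e^{tA}B\xi \to B\xi$ in $\maH$, the product handled by adding and subtracting $\tfrac{\Df(t)}{t}B\xi$. Combining, the first summand converges to $\tfrac{\sigma^2}{2}B\xi$ and the whole expression to $A\xi + \tfrac{\sigma^2}{2}B\xi = L_0\xi$. The main obstacle is precisely this first summand: one must differentiate the parabolic factor through the $\sigma$-dependent time $\Df(t,\sigma)$, which forces the combination of the uniform convergence $\Df(t)/t \to \sigma^2/2$, the strong continuity of $e^{rB}$, and the $c_0$-continuity of $e^{tA}$ applied to $B\xi$, all reconciled through dominated convergence on $I$.
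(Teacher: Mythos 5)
Your proof is correct and follows essentially the same route as the paper: the same decomposition $S(t)\xi-\xi=(e^{\Df(t)B}-I)e^{tA}\xi+(e^{tA}\xi-\xi)$, the uniform boundedness of $e^{sB}$ together with $\|\Df(t)\|_{L^\infty(I)}\to 0$ for part (i), and Lemma \ref{lemma.unif.limit} plus the $c_0$-property of $e^{tA}$ on $\maK_1$ for part (ii). The only difference is one of detail: the paper asserts the limit $t^{-1}(e^{\Df(t)B}e^{tA}\xi-e^{tA}\xi)\to\tfrac{\sigma^2}{2}B\xi$ rather tersely, whereas you carry out the fiberwise Taylor-remainder identity and the dominated-convergence argument explicitly, which is a welcome elaboration rather than a departure.
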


\begin{proof}
By the semigroup property, the operators $e^{tB}$ and $e^{tA}$ are
uniformly bounded if $0 \le t \le \epsilon$, for any fixed $\epsilon >
0$.  Since $I$ is a bounded interval, the functions $\Df(t)$ are
uniformly bounded for $t \le \epsilon$. Moreover,
$\|\Df(t)\|_{L^\infty(I)} \to 0$ as $t \searrow 0$. By the definition
of $S(t)$, Equation \eqref{eq.def.S(t)}, the first part of the lemma
follows.

The second part of the lemma is proved in a similar fashion. Indeed,
the relations $S(t)\, \maK_1 \subset \maK_1$ (see Lemma
\ref{lemma.S(t)}), $\Df'(0) = \sigma^2/2$ (see Lemma
\ref{lemma.unif.limit}), the fact that $e^{tA}$ is a $c_0$
semi-group that leaves $\maK_1$ invariant (Corollary \ref{cor.sgA}),
and Lemma \ref{lemma.cont} give that
\begin{multline*}
 \pa_t \big (T(t)\xi \big )\vert_{t = 0}
 \seq \pa_t \big (e^{\Ef(t) B} e^{tA}\xi \big )\vert_{t = 0}
 \seq \lim_{t \to 0} t^{-1} \big (e^{\Ef(t) B} e^{tA}\xi - \xi \big ) \\
\seq \lim_{t \to 0} t^{-1} \big (e^{\Ef(t) B} e^{tA}\xi - e^{tA}\xi
\big) + \lim_{t \to 0} t^{-1} \big( e^{tA}\xi - \xi \big )
\seq \frac{\partial \Ef}{\pa t}(0) B \xi + A \xi = L_0 \xi\,,
\end{multline*}
whenever $\xi \in \maK_1$.
\end{proof}

We have the following similar result on $\maK_1$:

\begin{lemma}\label{lemma.limits2}
The following limits in $\maK_1$ hold:
\begin{enumerate}[(i)]
\item $\lim_{t \searrow 0}S(t) \xi = \xi$ for all
$\xi \in \maK_1$ and, similarly,
\item  $\lim_{t \searrow 0}t^{-1} (S(t) \xi -
 \xi) = L_0 \xi$ for all $\xi \in \maK_1$ such that $L_0 \xi \in \maK_1$.
\end{enumerate}
These limits are valid also as limits in $\maK_1$ if $\xi \in \maK_1$
in the first limit and if $\xi \in H^{4}(I \times \RR)$ for the second limit.
\end{lemma}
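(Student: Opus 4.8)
The plan is to prove that $S(t) = e^{\Df(t)B}e^{tA}$ (Equation \eqref{eq.def.S(t)}) is a $c_0$ semi-group on $\maK_1 = H^2_\lambda(I\times\RR)$: statement (i) is its strong continuity at $t=0$, while (ii) identifies its generator with $L_0$. Since $\maK_1$ embeds continuously in $\maH = L^2_\lambda(I\times\RR)$, convergence in the $\maK_1$-norm implies convergence in $\maH$, so it suffices to work in $\maK_1$ (the $\maH$-versions then follow a fortiori and are contained in Lemma \ref{lemma.limits}). I would first record the uniform bound $\sup_{0\le t\le1}\|S(t)\|_{\maL(\maK_1)}<\infty$: the factor $e^{tA}$ is uniformly bounded on $\maK_1$ for $t\in[0,1]$ by Corollary \ref{cor.sgA}, and $e^{\Df(t)B}$ is bounded on $\maK_1$ by Corollary \ref{cor.C1}, the bound being uniform in $t\in[0,1]$ because $\Df(t)$, $\pa_\sigma\Df(t)$ and $\pa_\sigma^2\Df(t)$ are uniformly bounded on the bounded interval $\overline I$ (Lemma \ref{lemma.unif.limit}, together with $\Df(t)=O(t)$).

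For (i) I would use the decomposition $S(t)\xi - \xi = e^{\Df(t)B}(e^{tA}\xi - \xi) + (e^{\Df(t)B}\xi - \xi)$. The first summand tends to $0$ in $\maK_1$ since $e^{tA}$ is a $c_0$ semi-group on $\maK_1$ (Corollary \ref{cor.sgA}) and $e^{\Df(t)B}$ is uniformly bounded. For the second summand I would take $\xi$ in the dense subspace $H^4(I\times\RR)$ and use the fibrewise identity $e^{\Df(t,\sigma)B}\xi(\sigma)-\xi(\sigma)=\int_0^{\Df(t,\sigma)} e^{sB}B\xi(\sigma)\,ds$: differentiating in $\sigma$ produces, besides the harmless terms $e^{\Df(t)B}\pa_\sigma^k\xi - \pa_\sigma^k\xi$ (which vanish fibrewise by the $c_0$-property of $e^{sB}$ on $H^j_\lambda(\RR)$, Corollary \ref{cor.HkB}, and dominated convergence, since $\|\Df(t)\|_{L^\infty(I)}\to0$), only terms carrying a factor $\pa_\sigma^j\Df(t)=O(t)$ times $e^{\Df(t)B}B^\ell\xi$ with $\ell\le2$; since $\xi\in H^4$ makes $B^\ell\xi$ a bounded element of $L^2_\lambda$ and $e^{\Df(t)B}$ is uniformly bounded, these drop out. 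Thus $S(t)\xi\to\xi$ in $\maK_1$ for $\xi\in H^4(I\times\RR)$, and the uniform boundedness of the first paragraph upgrades this to all $\xi\in\maK_1$ by the standard $3\varepsilon$ density argument, proving (i).

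For (ii) with $\xi\in H^4(I\times\RR)$ I would mirror the computation in the proof of Lemma \ref{lemma.limits} but track convergence in $\maK_1$. Writing $t^{-1}(S(t)\xi-\xi)=t^{-1}\big(e^{\Df(t)B}\eta-\eta\big)+t^{-1}\big(e^{tA}\xi-\xi\big)$ with $\eta:=e^{tA}\xi$, the last term converges to $A\xi$ in $\maK_1$ because $A\xi=\kappa(\theta-\sigma)\pa_\sigma\xi\in H^3\subset\maK_1$ and $e^{tA}$ is $c_0$ on $\maK_1$. For the first term, the fibrewise identity $t^{-1}(e^{\Df(t,\sigma)B}\eta-\eta)=t^{-1}\!\int_0^{\Df(t,\sigma)} e^{sB}B\eta\,ds$ together with $\Df(t,\sigma)/t\to\sigma^2/2$ gives the pointwise-in-$\sigma$ limit $(\sigma^2/2)B\xi$; convergence in the $\maK_1$-norm is obtained by differentiating in $\sigma$, using Lemma \ref{lemma.unif.limit} (so that $\pa_\sigma^j\Df(t)/t\to\pa_\sigma^j(\sigma^2/2)$ uniformly for $j\le2$), the commutation $B^\ell e^{sB}=e^{sB}B^\ell$, and the uniform $H^4$-bound on $\eta=e^{tA}\xi$ coming from the explicit formula $e^{tA}h=h\circ\delta_t$ (Lemma \ref{lemma.hyperbolic}), which keeps $B^\ell\eta$ ($\ell\le2$) bounded in $L^2_\lambda$; terms of higher order in $t$ carry a spare factor of $t$ and vanish. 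Summing gives $t^{-1}(S(t)\xi-\xi)\to A\xi+(\sigma^2/2)B\xi=L_0\xi$ in $\maK_1$.

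Finally I would extend (ii) to every $\xi\in\maK_1$ with $L_0\xi\in\maK_1$. By the previous steps $S(t)$ is a $c_0$ semi-group on $\maK_1$ (its algebraic semi-group property being Lemma \ref{lemma.S(t)}); let $G$ be its generator, a closed operator, which the third paragraph shows satisfies $H^4(I\times\RR)\subset D(G)$ with $G=L_0$ there. Given such a $\xi$, I would approximate it by $\xi_n\in H^4(I\times\RR)$ with $\xi_n\to\xi$ and $L_0\xi_n\to L_0\xi$ in $\maK_1$ — available by mollification in $x$ and standard approximation in the bounded variable $\sigma$, no boundary condition being imposed in $\maK_1$ — so that closedness of $G$ forces $\xi\in D(G)$ and $G\xi=L_0\xi$, i.e. $t^{-1}(S(t)\xi-\xi)\to L_0\xi$ in $\maK_1$. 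I expect the genuine difficulty to lie in the $\maK_1$-norm bookkeeping of the third paragraph: differentiating $e^{\Df(t,\sigma)B}$ twice in $\sigma$ generates terms in which $\pa_\sigma\Df$ and $\pa_\sigma^2\Df$ multiply $B$- and $B^2$-applied semi-group factors, and it is precisely the combination of Lemma \ref{lemma.unif.limit} with the commutation $B^\ell e^{\Df(t)B}=e^{\Df(t)B}B^\ell$ (which lets the $H^4$-regularity of $\eta$ bound $B^\ell\eta$ in $L^2_\lambda$) that keeps these uniformly controlled as $t\searrow0$; the core/closedness step extending (ii) to the maximal domain is the secondary delicate point.
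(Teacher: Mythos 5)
Your proposal is correct and follows essentially the same route as the paper, whose entire proof is the remark that one repeats the argument of Lemma \ref{lemma.limits} in the $\maK_1$-topology, using additionally the second part of Corollary \ref{cor.C1} (i.e.\ that $e^{tA}$ and $e^{fB}$ act as $c_0$ semi-groups on $\maK_1$, combined with Lemma \ref{lemma.unif.limit}); you have simply carried out the bookkeeping the paper leaves to the reader. Your final closed-generator/density step, extending (ii) from $\xi \in H^4(I\times\RR)$ to the maximal domain $\{\xi \in \maK_1 : L_0\xi \in \maK_1\}$, is a detail the paper does not address explicitly, and it is a reasonable (and needed) supplement to match the stated hypotheses.
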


\begin{proof}
 The proof is similar to that of Lemma \ref{lemma.limits2}, but using also
 the second part of Corollary \ref{cor.C1}.
\end{proof}

We finally have that $L_0$ generates the semigroup $S(t)$.

\begin{theorem}\label{thm.main}
Let $\kappa > 0$ and $I= (\alpha, \beta)$, with $0 < \alpha < \theta <
\beta < \infty$, as before.  Then, $S(t) := e^{\Df(t) B} e^{tA}$
defines a $c_0$ semi-group on $\maH$, the generator of which coincides
with $L_0$ on $\maK_1$. Moreover, $S(t)$ defines a $c_0$ semi-group on $\maK_1$.
\end{theorem}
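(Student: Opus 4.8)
The plan is to verify the three defining properties of a $c_0$ semigroup for $S(t)$ and then identify its generator, using the lemmas already assembled. First I would establish the algebraic properties. The semigroup identity $S(t)S(s) = S(t+s)$ and the inclusion $S(t)\maK_1 \subset \maK_1$ are exactly the content of Lemma \ref{lemma.S(t)}, so these require no further work. The normalization $S(0) = I$ is immediate from the definitions, since $\Df(0) = 0$ (Proposition \ref{prop.Df}) and $e^{0 \cdot A} = I$, whence $S(0) = e^{0} e^{0} = I$.

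Next I would establish strong continuity at the origin, namely $\lim_{t \searrow 0} S(t)\xi = \xi$ for all $\xi \in \maH$. This is precisely part (i) of Lemma \ref{lemma.limits}, which was proved using the uniform boundedness of $e^{tA}$ and $e^{tB}$ on bounded time intervals together with $\|\Df(t)\|_{L^\infty(I)} \to 0$. Combined with the algebraic properties above, this shows $S(t)$ is a $c_0$ semigroup on $\maH$.

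It then remains to identify the generator. Let $G$ denote the generator of $S(t)$ in the sense of Definition \ref{def.gen.sgr}. Part (ii) of Lemma \ref{lemma.limits} shows that for every $\xi \in \maK_1$ the limit $\lim_{t \searrow 0} t^{-1}(S(t)\xi - \xi)$ exists and equals $L_0 \xi$. By the definition of the generator, this means $\maK_1 \subset D(G)$ and $G\xi = L_0\xi$ for all $\xi \in \maK_1$, which is exactly the assertion that $G$ coincides with $L_0$ on $\maK_1$. For the final claim, that $S(t)$ is also a $c_0$ semigroup on $\maK_1$, I would invoke Lemma \ref{lemma.limits2}: its part (i) gives strong continuity on $\maK_1$, while the inclusion $S(t)\maK_1 \subset \maK_1$ from Lemma \ref{lemma.S(t)} ensures $S(t)$ is a well-defined family of operators on $\maK_1$, and the algebraic identities carry over verbatim since they are identities of operators independent of the ambient space.

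The main obstacle is conceptual rather than computational: since $L_0$ is degenerate, one cannot appeal to the Lumer--Phillips machinery, so the entire argument hinges on the explicit factorization $S(t) = e^{\Df(t)B} e^{tA}$ and the nontrivial verification that the two flows intertwine correctly through the relation $\Df(t) + \Df(s) \circ \delta_t = \Df(t+s)$ underlying Lemma \ref{lemma.S(t)}. The delicate point in computing the generator is the differentiation of the composition at $t = 0$, where one must split $t^{-1}(e^{\Df(t)B} e^{tA}\xi - \xi)$ into the $A$-contribution and the $B$-contribution and justify passing to the limit; this uses $\Df'(0) = \sigma^2/2$ from Lemma \ref{lemma.unif.limit} and the joint-continuity Lemma \ref{lemma.cont} to handle the product of the operator family with the vector-valued curve. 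All of this is already in hand, so the proof is essentially an assembly of the preceding lemmas.
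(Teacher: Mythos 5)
Your proposal is correct and follows essentially the same route as the paper: the paper's proof of Theorem \ref{thm.main} is precisely the assembly of Lemma \ref{lemma.S(t)} (semigroup law and invariance of $\maK_1$) with Lemma \ref{lemma.limits} for the statement on $\maH$, and with Lemma \ref{lemma.limits2} for the statement on $\maK_1$. Your additional remarks on $S(0)=I$ and on where the real work lies (the cocycle identity for $\Df$ and the splitting of the difference quotient) accurately reflect the content of the supporting lemmas.
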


\begin{proof}
The first part is an immediate consequence of Lemmas \ref{lemma.S(t)} and
\ref{lemma.limits}. The second part uses Lemma \ref{lemma.limits2} instead.
\end{proof}

Obtaining explicit formulas is important in practice because it
allows for very fast methods. This is one of the reasons Heston's method
\cite{Heston}
is so popular. Explicit formulas lead also to faster methods in inverse
approaches to the determination of implied volatility, see \cite{Brummelhuis},
for instance.

\begin{corollary}\label{prop.explicit}
Under the assumptions of Theorem \ref{thm.main}, let $h = h(\sigma, x)
\in L^2_{\lambda}(I \times \RR) := e^{\lambda \weight} L^2(I \times
\RR)$ and set $u(t) := S(t) h$. Then, for almost all $\sigma \in I$:
\begin{equation}
  u(t, \sigma, x) \ := \ \frac1{\sqrt{4 \pi \Df}} \, \int \,
  {e^{-\frac{|x-y-\Df|^2}{4\Df}} h(\delta_t(\sigma),y)\, dy} \,
\label{eq:green}
\end{equation}
and $u$ is a mild solution of the Initial Value Problem: \ $\pa_t v -
Lv = 0$, $v(0)=h$. If $h\in \maK$, then $u$ is a strong solution, and
a classical solution provided that $h \in \maC^{1,2}(I\times \RR)\cap
L^2_{\lambda}(I \times \RR)$.
\end{corollary}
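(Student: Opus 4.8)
The plan is to assemble the explicit formula from the two semigroup formulas already established and then read off the three solution properties from the abstract theory of Section \ref{sec2} together with the smoothness of the Gaussian kernel. (I read the evolution equation in the statement as $\pa_t v - L_0 v = 0$, since $S(t)$ is generated by $L_0$, and I take $\maK$ to mean $\maK_1 = H^2_\lambda(I \times \RR)$.) First I would establish \eqref{eq:green} by direct composition. Applying $e^{tA}$ gives $(e^{tA}h)(\sigma, x) \seq h(\delta_t(\sigma), x)$ by Proposition \ref{prop.sgA} and the formula \eqref{eq.expta}, treating $x$ as a parameter. Then I would apply $e^{\Df(t) B}$ through the explicit heat kernel \eqref{eq.exptb}, with $\Df = \Df(t,\sigma)$ in the role of time; this is legitimate because $\Df(t,\sigma) > 0$ for $t > 0$ by Proposition \ref{prop.Df}, so the family construction of Proposition \ref{prop.fam.B} applies. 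Composing,
\[
  (S(t)h)(\sigma, x) \seq e^{\Df(t,\sigma) B}\big[ h(\delta_t(\sigma), \cdot) \big](x)
  \seq \frac1{\sqrt{4\pi \Df}} \int e^{-\frac{|x-y-\Df|^2}{4\Df}} h(\delta_t(\sigma), y)\, dy,
\]
which is exactly \eqref{eq:green}.

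Second, the mild-solution claim is immediate. Theorem \ref{thm.main} states that $S(t) = e^{tL_0}$ is a $c_0$ semi-group on $L^2_\lambda(I \times \RR)$ with generator $L_0$, so Remark \ref{rem.solutions} yields that $u(t) := S(t)h$ is a mild solution of $\pa_t v - L_0 v = 0$, $v(0) = h$, for every $h \in L^2_\lambda(I \times \RR)$. The strong-solution claim follows along the same lines: for $h \in \maK_1$, Theorem \ref{thm.main} shows $h \in D(L_0)$ (the generator coincides with $L_0$ on $\maK_1$ and $S(t)$ preserves $\maK_1$), so the second clause of Remark \ref{rem.solutions} upgrades $u$ to a strong solution.

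Third, for the classical-solution claim I would work directly from \eqref{eq:green}. For $t > 0$ the Gaussian factor $\frac1{\sqrt{4\pi\Df}}\,e^{-|x-y-\Df|^2/4\Df}$ is smooth in $(t,\sigma,x)$ (once more because $\Df > 0$), and $\delta_t(\sigma)$ is smooth in $(t,\sigma)$; combined with $h \in \maC^{1,2}(I \times \RR)$, this justifies differentiation under the integral sign and shows that $\pa_t u$ and $\pa^\alpha u$, $|\alpha| \le 2$, exist and are continuous on $(0,T] \times (I \times \RR)$. Continuity up to $t = 0$ with $u(0) = h$ follows from Lemma \ref{lemma.limits}(i). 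The main obstacle is verifying $\pa_t u - L_0 u = 0$ \emph{pointwise}: differentiating \eqref{eq:green} in $t$ and in $\sigma$ hits both the transport argument $\delta_t(\sigma)$ and the kernel width $\Df(t,\sigma)$, and one must check that these contributions recombine into $A u + \frac{\sigma^2}{2} B u$. Rather than grind through this, I would match the pointwise derivative against the abstract $L^2$ one: for fixed $t_0 > 0$, the semi-group identity $u(t_0 + s) = S(s) u(t_0)$ together with Lemma \ref{lemma.limits}(ii) gives $\pa_t u(t_0) = L_0 u(t_0)$ in $L^2_\lambda(I \times \RR)$, and since both sides are continuous functions of $(\sigma, x)$ by the regularity just established, the equality holds pointwise, completing the verification.
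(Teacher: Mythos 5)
Your derivation of the formula \eqref{eq:green} (composing the explicit transport formula \eqref{eq.expta} for $e^{tA}$ with the Gaussian kernel \eqref{eq.exptb} evaluated at time $\Df(t,\sigma)>0$), and your handling of the mild and strong solution claims via Theorem \ref{thm.main}, Lemma \ref{lemma.S(t)} and Remark \ref{rem.solutions}, are correct and are essentially what the paper intends --- the corollary is stated without proof as an immediate consequence of these ingredients, and your readings of the two typos ($L$ for $L_0$, $\maK$ for $\maK_1$) are the right ones.

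The one genuine gap is in your final step for the classical-solution claim. You propose to get the pointwise identity $\pa_t u = L_0 u$ by applying Lemma \ref{lemma.limits}(ii) to $\xi = u(t_0)$, but that lemma requires $\xi \in \maK_1 = H^{2}_{\lambda}(I\times\RR)$, and for $h \in \maC^{1,2}(I\times\RR)\cap L^2_{\lambda}$ the function $u(t_0)$ need not lie there: $H^2$ demands two weak $\sigma$-derivatives, $h$ is only assumed once differentiable in $\sigma$, and neither $e^{tA}$ nor $e^{\Df(t)B}$ produces any additional regularity in the $\sigma$ direction (the smoothing of the heat kernel acts only in $x$). So the matching argument does not apply as stated. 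The repair is precisely the direct computation you wanted to avoid, and it is shorter than it looks: differentiating \eqref{eq:green} under the integral sign, the terms recombine because of the two scalar identities
\begin{equation*}
 \pa_t \Df(t,\sigma) - \kappa(\theta-\sigma)\,\pa_\sigma \Df(t,\sigma) \seq \frac{\sigma^2}{2}
 \qquad\mbox{and}\qquad
 \pa_t \delta_t(\sigma) \seq \kappa(\theta-\sigma)\,\pa_\sigma \delta_t(\sigma) \seq \kappa(\theta-\sigma)e^{-\kappa t}\,,
\end{equation*}
which one checks directly from Equations \eqref{eq.def.delta} and \eqref{eq.def.Df}; the first identity cancels the contribution of the kernel width against $\frac{\sigma^2}{2}Bu$, and the second cancels the contribution of the transported argument against $Au$. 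Note also that $L_0$ involves only one $\sigma$-derivative and two $x$-derivatives, so $L_0 u$ makes pointwise sense exactly under the stated $\maC^{1,2}$ hypothesis --- which is why the direct route works where the $\maK_1$-based shortcut does not.
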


\section{Mapping properties and error estimates\label{sec5}}

In this section, we prove mapping properties between weighted spaces
for $e^{tL_0}$, by deriving another formula for its distributional
kernel. We then use these results to compare the semi-groups $S(t) :=
e^{tL_0}$ and $e^{tL}$. We continue to assume that $I = (\alpha,
\beta)$, $0 < \alpha < \theta < \beta < \infty$, and that $\kappa >
0$.

\subsection{Lie algebra identities and semi-groups}

In the previous section we used implicitly commutator estimates
between the operators $A$ and $B$. We collect in the remark below
results pertaining to a general class of operators with properties
similar to the operators $A$ and $B$, which, with abuse of notation,
we continue to denote by $A$ and $B$.

\begin{remark}\label{rem.Hadamard}
 Let $V$ be a finite dimensional space of (usually unbounded)
 operators acting on some Banach space $X$, and let $A$ be a closed
 operator on $X$ with domain $D(A)$.  We make the following
 assumptions
 \begin{enumerate}[(i)] %[label=(\roman*)]
  \item All operators in $V$ have the same domain $\maK$, which is
    endowed with a Banach space norm such that, for any $B \in V$, $B
    : \maK \to X$ is continuous.
  \item The space
 \begin{equation*}
  \maW \ede \{ \, \xi \in D(A) \, , \ A\xi \in \maK \, \} \, \cap \,
  \{ \, \xi \in \maK \, , \ B\xi \in D(A) \ (\forall) B \in V\, \}
 \end{equation*}
 is dense in $\maK$ in its induced norm.
 \item If $B \in V$, the closure of the operator $[A, B]$ with domain
   $\maW$ is in $V$.
 \item $A$ generates a $c_0$ semi-group of operators on $X$ that leaves $\maK$
 invariant and induces a $c_0$ semi-group on $\maK$.
 \end{enumerate}
 Then, denoting by $e^{t \ad_A} : V \to V$ the exponential of the
 endomorphism $\ad_A : V \to V$ of the finite dimensional space $V$,
 we obtain the following Hadamard type formula
 \begin{equation}
  e^{tA} B \seq e^{t \ad_A}(B) e^{tA}\,, \quad (\forall) \, B \in V\,.
 \end{equation}
 This relation can be proved by considering the function
 \begin{equation*}
  F(t) \ede e^{tA} B\xi - e^{t \ad_A}(B) e^{tA}\xi\, , \quad B\in V
  \mbox{ and } \xi \in \maW.
 \end{equation*}
 Our assumptions imply that $F(t) \in D(A)$ for all $t$, that $F(t)$
 is differentiable, and that $F'(t) = AF(t)$. By the uniqueness of
 strong solutions to this evolution equation \cite{AmannBook,
   PazyBook}, it follows that $F(t) = 0$ for all $t\geq 0$, since
 $F(0)=0$.
\end{remark}

We shall use the above remark in the following setting.

\begin{remark}\label{rem.Hadamard.commutator}
 Let $V = \CC \pa_\sigma$ with domain $\maK_1$, and let $A := \kappa
 (\theta - \sigma) \pa_\sigma$. consider the adjoint action of $A$ on
 $V$. Since
 \begin{equation*}
  A \pa_\sigma - \pa_\sigma A \seq [A, \pa_\sigma]
  \seq [ \kappa (\theta - \sigma) \pa_\sigma , \pa_\sigma]
  \seq [ \kappa (\theta - \sigma) \pa_\sigma , \pa_\sigma]
  \seq \kappa \pa_{\sigma}\,,
 \end{equation*}
it follows that $e^{tA} \pa_\sigma = e^{\kappa t} \pa_\sigma e^{tA}$.
\end{remark}

In the same spirit, we have the following.

\begin{remark}\label{rem.formula}
We keep the same notation and assumptions as in
   \ref{rem.Hadamard}, but we further assume that $V \simeq \oplus_{a
   \in \RR} V_a$, where
 \begin{equation}
  [A, B_a] \ede AB_a - B_aA \seq a B_a\,, \ \ \mbox{ for any } \
  B_a \in V_a, \ a \in \RR.
 \end{equation}
 Of course, $V_a = 0$, except for finitely many values $a \in
 \RR$. Let $B \in V$ and decompose it as $B = \sum_{a \in \RR} B_a$,
 with $B_a \in V_a$.  We proceed formally to guess a formula for
 $e^{t(A + B)}$. We write $e^{t(A + B)} = e^{tA} e^{\sum_{a}
   f_a(t)B_a}$. Differentiating this inequality, using the semigroup property
 $\partial_t e^{t(A + B)} = (A + B) e^{t(A + B)}$, that $e^{tA} B_a =
 e^{ta} B_a e^{tA}$, and identifying the coefficients, we obtain
 $f_a(t) = (1 - e^{-at})/a = \maE(-at)t$, where $\maE(s) = (e^{s} -
 1)/s$.  Hence, this procedure gives  the (formal!) result
 \begin{equation}
  e^{t(A + B)} \seq e^{tA} e^{\sum_{a} \maE(-at)t B_a}
  \seq e^{\sum_{a} \maE(at)t B_a} e^{tA}\,.
 \end{equation}
 Of course, this procedure has to be justified independently or one
 has to make sense of all the steps in its derivation. In this paper,
 we have chosen to verify independently Formula
 \eqref{eq.def.S(t)}. See also \cite{neebBook}.
\end{remark}

We close by deriving an equivalent formula for $S(T)$, which, by the smoothing
properties of $e^{tB}$, $t>0$,  in $x$, can be used to show that, if $\xi \in
\maC^{1}(I; L^2_{\lambda}(\RR))$, then $u(t) = S(t) \xi$ defines a classical
  solution of $\pa_t u - L_0 u = 0$ for $t>0$. This result uses also
  Corollaries \ref{cor.sgA.new}, \ref{cor.sgA}, and \ref{cor.C1}. The
  method of proof is that of the proof of Lemma \ref{lemma.est.PS(t)}.
For this purpose, we introduce the function:
\begin{equation}\label{eq.def.Cf}
    \Cf(t) \ede \Cf (t, \sigma) \ede
    \frac{(\theta-\sigma)^2}{4\kappa}(e^{2\kappa t} - 1) -
    \frac{\theta(\theta-\sigma)}{\kappa}(e^{\kappa t} - 1) + \frac12
    \theta^2t.
\end{equation}
We notice that $\Cf(t)$ is obtained from $\Df(t)$ by replacing
$\kappa$ with $-\kappa$, so it is still non negative everywhere (see
Proposition \ref{prop.Df}).  Applying the reasoning in the previous
remark, we obtain the following alternative expression for $S(t)$:
\begin{equation}\label{eq.more}
 S(t) \ede e^{\Df(t) B} e^{tA} \seq e^{tA} e^{\Cf(t) B} \,.
\end{equation}

\subsection{Mapping properties}

We shall need certain mapping properties for the semi-groups $e^{tL}$
and $e^{tL_0}$, some of which are standard and some of which we prove in this
subsection.

\begin{lemma}\label{lemma.bound}
 Assume that $I := (\alpha, \beta)$ is bounded and that $\alpha > 0$.
 Then there exists $\epsilon > 0$ such that $\Df(t, \sigma) \ge
 \epsilon t$ for $\sigma \in I$ and $t \in [0, 1]$.
\end{lemma}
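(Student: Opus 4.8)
The plan is to reduce the desired lower bound to the positivity of the minimum of a continuous function on a compact set. First I would introduce the quotient $g(t, \sigma) := \Df(t, \sigma)/t$ for $t > 0$, and extend it to $t = 0$ by setting $g(0, \sigma) := \sigma^2/2$. The key preliminary observation is that this extension is continuous on the compact rectangle $\overline{I} \times [0, 1] = [\alpha, \beta] \times [0, 1]$: away from the slice $t = 0$ continuity is clear, since $\Df$ is analytic (Proposition \ref{prop.Df}) and $t \neq 0$ there, while continuity across $t = 0$ is exactly the content of the $j = 0$ case of Lemma \ref{lemma.unif.limit}, which asserts that $\Df(t)/t \to \sigma^2/2$ uniformly in $\sigma \in I$ as $t \searrow 0$. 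Note that the boundedness of $I$ is what makes $\overline{I}$ compact, so both standing hypotheses enter already at this stage.

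Next I would establish that $g$ is strictly positive on all of $[\alpha, \beta] \times [0, 1]$. For $t > 0$ this is immediate from Proposition \ref{prop.Df}, which gives $\Df(t, \sigma) > 0$ and hence $g(t, \sigma) = \Df(t, \sigma)/t > 0$. On the slice $t = 0$ we have $g(0, \sigma) = \sigma^2/2 \ge \alpha^2/2 > 0$, where the hypothesis $\alpha > 0$ is used in an essential way. Thus $g$ is a continuous, strictly positive function on a compact set.

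The conclusion then follows from compactness: a continuous function on the compact set $[\alpha, \beta] \times [0, 1]$ attains its infimum, and since $g$ is strictly positive that infimum is a positive number $\epsilon > 0$. Unwinding the definition gives $\Df(t, \sigma) = t\, g(t, \sigma) \ge \epsilon\, t$ for all $\sigma \in I \subset \overline{I}$ and all $t \in (0, 1]$, and the case $t = 0$ is the trivial equality $0 = 0$. I do not anticipate a serious obstacle, since the two substantive inputs---positivity of $\Df$ for $t > 0$ and the uniform limit $\Df(t)/t \to \sigma^2/2$---are already available from Proposition \ref{prop.Df} and Lemma \ref{lemma.unif.limit}. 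The only point requiring care is the continuity of $g$ up to the boundary $t = 0$, which is precisely why the \emph{uniform} (rather than merely pointwise) convergence supplied by Lemma \ref{lemma.unif.limit} is needed; had $\alpha$ been allowed to equal $0$, one would have $g(0,0) = 0$ and the bound $\Df \ge \epsilon t$ would genuinely fail near the corner $(t,\sigma) = (0,0)$, which explains the role of the assumption $\alpha > 0$.
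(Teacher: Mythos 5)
Your proposal is correct and follows essentially the same route as the paper: both consider $\Df(t,\sigma)/t$, extend it continuously to the compact rectangle $[\alpha,\beta]\times[0,1]$ (with value $\sigma^2/2$ at $t=0$), observe it is strictly positive there using Proposition \ref{prop.Df} and $\alpha>0$, and take $\epsilon$ to be the positive infimum. Your explicit appeal to Lemma \ref{lemma.unif.limit} for continuity up to $t=0$ is a slightly more detailed justification than the paper's direct citation of the analyticity in Proposition \ref{prop.Df}, but the argument is the same.
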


\begin{proof}
 Let us consider the function $h(t, \sigma):= \Df(t, \sigma)/t$ for
 $\sigma \in [\alpha, \beta]$ and $t \in (0, 1]$. By Proposition
  \ref{prop.Df}, $h$ extends to a continuous function on $[\alpha,
    \beta] \times [0, 1]$.  By the assumption that $\alpha > 0$ and by
  Proposition \ref{prop.Df}, we have that $h >0$ on $[\alpha, \beta]
  \times [0, 1]$.  Therefore $\epsilon := \inf h > 0$.
\end{proof}

We recall also the following general fact.

\begin{remark}\label{rem.adjoint}
If $T$ generates a $c_0$ semi-group $e^{tT}$ on a Banach space $X$,
then $(e^{tT})^{*}$ will also be a semi-group (but the strong
continuity property may fail). However, if $X$ is {\em reflexive},
then $(e^{tT})^{*}$ is strongly continuous and, in fact,
$(e^{tT})^{*}$ is a $c_0$ semi-group with generator $T^{*}$ (see
Corollary 1.10.6 in \cite{PazyBook}). In other words, $(e^{tT})^{*} =
e^{tT^*}$, if $X$ is reflexive.  Moreover, if $e^{tT}$ is an analytic
semi-group, then $(e^{tT})^{*}$ is also analytic since the function
$(e^{\overline{z}T})^{*}$ is holomorphic in a domain of the form
$\Delta_{\delta}$, $\delta > 0$.
\end{remark}

{\em All the norms $\| \ \|$ below refer to the norm of vectors in
$\maH = L^2_{\lambda}(I \times \RR)$ or of bounded operators on that space.}

\begin{lemma} \label{lemma.est.PS(t)}
Let $s \ge 0$. There exists $C_s > 0$ such that, for all $h \in \maH :=
L^2_{\lambda}(I \times \RR)$,
\begin{equation*}
 t^{s/2} \|e^{\Df(t) B} h\|_{H^{0, s}_{\lambda}(I \times \RR)} \, \le
 \, C_s \|h \| \ede C_s \|h \|_{L^2_{\lambda}(I \times \RR)}\,,
 \ \ \mbox{ for } t \in (0, 1]\,.
\end{equation*}
Consequently, $\|\pa_x^k e^{tL_0}\| \le C t^{-k/2}$, where $t \in (0,
1]$ and $C$ is independent of $t$. In particular, $\pa_x^k e^{tL_0} \xi$
  is continuous in $t$.
\end{lemma}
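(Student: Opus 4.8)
The plan is to reduce everything to the one‑dimensional smoothing estimate for the heat semigroup $e^{\tau B}$ acting in the $x$ variable, and then to propagate it uniformly in $\sigma$ using the two‑sided control of $\Df(t,\sigma)$. The starting point is the scalar estimate
\[
 \|\pa_x^j e^{\tau B}\|_{\maL(L^2_{\lambda}(\RR))} \,\le\, C_j\,\tau^{-j/2}\,, \qquad \tau \in (0, M]\,,
\]
for each integer $j$ and any fixed $M$. Since $B$ generates an analytic semi‑group on $L^2_{\lambda}(\RR)$ (Theorem \ref{thm.gen.B}), Remark \ref{rem.a.est} gives $\|B^n e^{\tau B}\| \le C\tau^{-n}$ on $(0,1]$, while Corollary \ref{cor.reg} yields $\|v\|_{H^{2n}_{\lambda}(\RR)} \le C(\|v\|_{L^2_{\lambda}} + \|B^n v\|_{L^2_{\lambda}})$; combining these, together with the uniform boundedness of $e^{\tau B}$, controls the even‑order derivatives by $\tau^{-n}$, and odd orders follow by interpolation. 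The extension from $(0,1]$ to a bounded interval $(0,M]$ is immediate because $\pa_x$ commutes with $e^{\tau B}$ and $\|e^{\tau B}\|$ stays bounded for $\tau \le M$.

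Next I would record that $\Df(t,\sigma)$ remains in a bounded set on $(0,1]\times I$ (Lemma \ref{lemma.unif.limit}) and that $\Df(t,\sigma) \ge \epsilon t$ (Lemma \ref{lemma.bound}). Applying the scalar estimate with $\tau = \Df(t,\sigma)$ then gives, for each $j \le s$ and each fixed $\sigma$,
\[
 \|\pa_x^j e^{\Df(t,\sigma) B} h(\sigma,\cdot)\|_{L^2_{\lambda}(\RR)} \,\le\, C_j\,\Df(t,\sigma)^{-j/2}\|h(\sigma,\cdot)\|_{L^2_{\lambda}(\RR)} \,\le\, C_j\,(\epsilon t)^{-j/2}\|h(\sigma,\cdot)\|_{L^2_{\lambda}(\RR)}\,.
\]
Summing over $j \le s$ to build the $H^s_{\lambda}(\RR)$ norm in $x$, squaring, integrating in $\sigma \in I$, and using $\int_I \|h(\sigma,\cdot)\|_{L^2_{\lambda}(\RR)}^2\,d\sigma = \|h\|^2$ produces $\|e^{\Df(t)B}h\|_{H^{0,s}_{\lambda}(I\times\RR)} \le C_s\,t^{-s/2}\|h\|$, which is the first assertion after multiplying by $t^{s/2}$. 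For non‑integer $s$ one interpolates between consecutive even integers, or conjugates by $w = e^{\lambda\langle x\rangle}$ and uses the Fourier multiplier $e^{-\tau\xi^2}$ directly.

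For the consequence, I would use the factorization $e^{tL_0} = S(t) = e^{\Df(t)B} e^{tA}$ (Theorem \ref{thm.main} and \eqref{eq.def.S(t)}) together with the fact that $\pa_x$ commutes with $e^{tA}$, since $A = \kappa(\theta-\sigma)\pa_\sigma$ acts only in $\sigma$ and $e^{tA}h(\sigma,x) = h(\delta_t(\sigma),x)$. Writing $\pa_x^k e^{tL_0}h = (\pa_x^k e^{\Df(t)B})(e^{tA}h)$ and invoking the first part with $s = k$ bounds $\|\pa_x^k e^{\Df(t)B}\| \le C_k t^{-k/2}$, while Proposition \ref{prop.sgA} gives $\|e^{tA}\| \le e^{\kappa t/2} \le e^{\kappa/2}$ on $(0,1]$; multiplying yields $\|\pa_x^k e^{tL_0}\| \le C t^{-k/2}$. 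Continuity in $t > 0$ follows from Lemma \ref{lemma.cont}: the family $V(t) := \pa_x^k e^{\Df(t)B}$ is strongly continuous for $t > 0$ (for fixed $\sigma$ the map $\tau \mapsto \pa_x^k e^{\tau B}$ is norm‑continuous away from $\tau = 0$ by analyticity, $\Df(t,\sigma)$ is continuous and positive, and the uniform bound of the first part justifies passing the limit under the $\sigma$‑integral by dominated convergence), while $t \mapsto e^{tA}\xi$ is continuous because $e^{tA}$ is a $c_0$ semi‑group.

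The only genuinely delicate point — the main obstacle — is keeping every constant uniform in $\sigma \in I$ while converting the intrinsic smoothing rate $\Df(t,\sigma)^{-s/2}$ into the clean rate $t^{-s/2}$; this is exactly what the boundedness of $\Df$ (Lemma \ref{lemma.unif.limit}) and the lower bound $\Df \ge \epsilon t$ (Lemma \ref{lemma.bound}) are for. Establishing the scalar smoothing estimate on the weighted space $L^2_{\lambda}(\RR)$, rather than on $L^2(\RR)$, is the other point requiring care, but it is handled cleanly by the analytic‑semi‑group bound of Remark \ref{rem.a.est} combined with the weighted elliptic regularity of Corollary \ref{cor.reg}.
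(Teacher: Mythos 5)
Your proof is correct and follows essentially the same route as the paper's: the smoothing bound for $e^{\Df(t) B}$ is obtained from the analytic-semigroup estimate of Remark \ref{rem.a.est} combined with the weighted elliptic norm equivalence of Corollary \ref{cor.reg} and the lower bound $\Df(t,\sigma)\ge \epsilon t$ of Lemma \ref{lemma.bound}, with interpolation handling non-even $s$, and the consequence then comes from the factorization $e^{tL_0}=e^{\Df(t)B}e^{tA}$ and Lemma \ref{lemma.cont}. The only cosmetic differences are that you argue fiberwise in $\sigma$ where the paper uses the global operator identity $B^n e^{\Df(t)B}=e^{(\Df(t)-\epsilon t)B}B^n e^{\epsilon tB}$, and that you reuse the first part directly for the bound on $\pa_x^k e^{tL_0}$ where the paper inserts the resolvent factor $\pa_x^{2k}(\mu_0-B)^{-k}$.
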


\begin{proof}
Let us assume first $s = 2n$, for some positive integer $n$.  We have
that the norm $\|g \|_{H^{0, 2n}_{\lambda}(I \times \RR)}$ is
equivalent to the norm $\|g \|\ + \| B^n g \|$, since $B$ is uniformly
strongly elliptic on $\RR$ with totally bounded coefficients (see
Corollary \ref{cor.reg}). In particular, $\|g \|_{H^{0,
    2n}_{\lambda}(I \times \RR)} \le C \big (\|g \| + \| B^n g \| \big
)$.  It is therefore enough to show that there exists $C_s'$ such that
\begin{equation}\label{eq.enough}
  \| e^{\Df(t) B} h \| + \| B^n e^{\Df(t) B} h \| \, \le \, C_s'
  t^{-n} \|h \|\,,
\end{equation}
since then the desired relation follows with $C_s = C C_s'$.  Lemma
\ref{lemma.bound} gives
\begin{multline*}
  \| e^{\Df(t) B} h \| + \| B^n e^{\Df(t) B} h \|
  \seq \| e^{\Df(t) B} h \| + \| e^{(\Df(t) - \epsilon t)B } B^n
  e^{\epsilon t B} h \| \\
  \le \, C \big (\| h \| + \| B^n e^{\epsilon t B} h \| \big )
 \le \, C (\epsilon t)^{-n} \|h \|\,,
\end{multline*}
since $e^{g B}$ is bounded on $L^2_{\lambda}(I \times \RR)$, if $g \ge
0$ is bounded measurable, and $t^{n}B^n e^{tB}$ is also bounded on the
same space (by Equation \eqref{eq.regularity} for $T = B$). Here, we
have used the assumption that $I$ is bounded. This argument proves
Equation \eqref{eq.enough}, and consequently also the result for $s =
2n$. For general $s \ge 0$, the result follows by complex
interpolation.

To prove the last part, we write
\begin{equation*}
  \pa_x^{2k} e^{tL_0} \seq \pa_x^{2k}(\mu_0 - B)^{-k} (\mu_0 -
  B)^{k}e^{\Df(t) B} e^{tA}\,,
\end{equation*}
where $\mu_0$ is large.  We have that $\pa_x^{2k}(\mu_0 - B)^{-k}$ is
bounded by the uniform strong ellipticity of $B$ and Theorem
\ref{thm.reg}.  Remark \ref{rem.a.est} and Lemmas \ref{lemma.cont} and
\ref{lemma.bound} show that $(\mu_0 - B)^{k}e^{\Df(t) B}$ depends
smoothly on $t$. Next, Remark~\ref{rem.a.est} also gives that
$\|(\mu_0 - B)^{k}e^{\Df(t) B}\| \le C t^{-k}$. This implies that
$\|\pa_x^{2k} e^{tL_0}\| \le C t^{-k}$. Our desired estimate
$\|\pa_x^k e^{tL_0}\| \le C t^{-k/2}$ is then obtained by
interpolation. Finally, using also Lemma~\ref{lemma.cont}, we obtain
that $\pa_x^{k} e^{tL_0}$ depends continuously on $t$.
\end{proof}

In the same way, we obtain the following result.

\begin{lemma} \label{lemma.est.tL}
If $h \in \maH := L^2_{\lambda}(I \times \RR)$, then
\begin{equation*}
 \|e^{tL} h\|_{H^{s}_{\lambda}(I \times \RR)} \, \le \, C t^{-s/2} \|h \|\,.
\end{equation*}
If $P$ is a differential operator of order $k$ with totally bounded
coefficients on $I \times \RR$, then $P e^{tL}$ and $e^{tL} P$ extend
to bounded operators on $\maH$ of norm $\le C t^{-k/2}$ that depend
smoothly on $t > 0$.
\end{lemma}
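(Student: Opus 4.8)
The plan is to follow the method of Lemma \ref{lemma.est.PS(t)}, replacing the one–dimensional smoothing of $e^{\Df(t) B}$ by the full elliptic smoothing of the analytic semi-group $e^{tL}$. First I would establish the estimate for even integers $s = 2n$ and then obtain the general case by complex interpolation. The two ingredients are the analytic–semi-group bound $\|L^n e^{tL}\| \le C t^{-n}$ for $t \in (0,1]$, which holds by Theorem \ref{thm.gen.L} and Remark \ref{rem.a.est}, together with the elliptic a priori estimate
\begin{equation*}
 \|u\|_{H^{2n}_{\lambda}(I \times \RR)} \, \le \, C \sum_{j=0}^{n} \|L^j u\|_{L^2_{\lambda}(I \times \RR)}\,, \qquad u \in D(L^n)\,.
\end{equation*}
Granting this, and using that $e^{tL} h \in D(L^n)$ for every $n$ (since $e^{tL}$ is analytic), one gets $\|e^{tL} h\|_{H^{2n}_{\lambda}} \le C \sum_{j=0}^n t^{-j} \|h\| \le C t^{-n}\|h\|$ on $(0,1]$, which is the claim for $s = 2n$.

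The a priori estimate itself I would prove by induction on $n$ using the elliptic regularity Theorem \ref{thm.reg}. The base case $n=1$ is that theorem applied on $\maK_0 = D(L)$. For the inductive step, note that if $u \in D(L^n)$ then $L^j u \in D(L) = \maK_0$ for $j \le n-1$, so each such $L^j u$ vanishes on $\pa I \times \RR$; applying Theorem \ref{thm.reg} with $m = 2n-1$ to $u$, whose image $Lu \in D(L^{n-1})$ satisfies $\|Lu\|_{H^{2n-2}_{\lambda}} \le C \sum_{j=1}^n \|L^j u\|$ by the inductive hypothesis, yields $u \in H^{2n}_{\lambda}$ with the stated bound. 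The point that makes the bootstrap legitimate is precisely that the boundary compatibility built into $D(L^n)$ forces the vanishing of $L^j u$ at the endpoints of $I$ needed at each step. Complex interpolation between the integer exponents then gives $\|e^{tL}h\|_{H^s_{\lambda}} \le C t^{-s/2}\|h\|$ for all real $s \ge 0$.

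For the second part, $P e^{tL}$ is handled directly: by Lemma \ref{lemma.cont.bg} the order-$k$ operator $P$ maps $H^k_{\lambda}(I \times \RR)$ continuously into $\maH$, so the first part gives $\|P e^{tL} h\| \le C \|e^{tL}h\|_{H^k_{\lambda}} \le C t^{-k/2}\|h\|$. For $e^{tL} P$, which a priori is only defined on $H^k_{\lambda}$, I would pass to adjoints in the Hilbert space $\maH = L^2_{\lambda}(I \times \RR)$. Since $\maH$ is reflexive, Remark \ref{rem.adjoint} gives $(e^{tL})^{*} = e^{tL^{*}}$ with $e^{tL^{*}}$ again analytic, and the $L^2_{\lambda}$–adjoint $L^{*}$ is again uniformly strongly elliptic with totally bounded coefficients on the bounded interval $I$ (reduce to $\lambda = 0$ by Lemma \ref{lemma.conjugation} and take the formal transpose), so the first part applies verbatim to $e^{tL^{*}}$. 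Hence $P^{*} e^{tL^{*}}$ is bounded of norm $\le C t^{-k/2}$, and its adjoint, which extends $e^{tL} P$, is bounded on $\maH$ with the same bound. Smooth dependence on $t$ for $t > 0$ follows from analyticity: differentiating $P e^{tL}$ produces operators $P L^m e^{tL}$, which the first part (applied to the order-$(k+2m)$ operator $P L^m$) bounds by $C t^{-(k+2m)/2}$, locally bounded on $(0,\infty)$, with Lemma \ref{lemma.cont} propagating continuity; the same argument applied to $P^{*} e^{tL^{*}}$ handles $e^{tL} P$.

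I expect the main obstacle to be the $e^{tL} P$ assertion. One must both justify the duality argument that lets $e^{tL} P$ be \emph{extended} from $H^k_{\lambda}$ to all of $\maH$, and verify carefully that the $L^2_{\lambda}$–adjoint $L^{*}$ genuinely falls into the class of operators to which the first part of the lemma applies. The boundary-condition bookkeeping in the regularity bootstrap is the secondary point where care is required.
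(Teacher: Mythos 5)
Your proposal is correct and follows essentially the same route as the paper: elliptic regularity (Theorem \ref{thm.reg} together with the invertibility of $L-\mu_0$ from Corollary \ref{cor.Garding}) combined with the analytic-semi-group bound $\|(L-\mu_0)^n e^{tL}\|\le Ct^{-n}$ for $s=2n$, interpolation for general $s$, direct composition for $Pe^{tL}$, and duality via $L^{*}$ for $e^{tL}P$. The only cosmetic difference is that you prove the a priori estimate $\|u\|_{H^{2n}_{\lambda}}\le C\sum_{j\le n}\|L^{j}u\|$ by induction, whereas the paper packages the same bootstrap as boundedness of $(L-\mu_0)^{-n}:L^{2}_{\lambda}\to H^{2n}_{\lambda}$.
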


\begin{proof}
Since $L$ is uniformly strongly elliptic with totally bounded
coefficients, there exists $\mu_0 > 0$ such that
\begin{equation*}
  L -\mu_0: H^{m+1}_{\lambda}(I \times \RR) \cap \{u(\alpha, x) =
  u(\beta, x) = 0\} \to H^{m-1}_{\lambda}(I \times \RR),
\end{equation*}
is an isomorphism by Theorem \ref{thm.reg} and Corollary
\ref{cor.Garding}.  Let $(L - \mu_0)^{-1}$ denote the resulting map
$L^{2}_{\lambda}(I \times \RR) \to H^{2}_{\lambda}(I \times
\RR)$. Then $(L - \mu_0)^{-1}$ maps $H^{m-1}_{\lambda}(I \times \RR)
\to H^{m+1}_{\lambda}(I \times \RR)$ continuously. In particular, $(L
- \mu_0)^{-n} : L^{2}_{\lambda}(I \times \RR) \to H^{2n}_{\lambda}(I
\times \RR)$ is continuous. Let us assume now that $s = 2n$.  Then
\begin{multline*}
  \|e^{tL} h\|_{H^{2n}_{\lambda}(I \times \RR)} \seq \|(L -
  \mu_0)^{-n} (L - \mu_0)^{n} e^{tL} h\|_{H^{2n}_{\lambda}(I \times
    \RR)} \\
  \le \, C \|(L - \mu_0)^{n} e^{tL} h\|_{L^{2}_{\lambda}(I \times
    \RR)} \, \le \, C t^{-s/2} \|h \|\,,
\end{multline*}
since $L$ generates an analytic semi-group. For general $s$, the
inequality follows by interpolation.

Let $P$ now be as in the statement of the lemma. Then $P :
H^{k}_{\lambda}(I \times \RR) \to L^{2}_{\lambda}(I \times \RR)$ is
bounded. This implies the result for $P e^{tL}$.  The result for
$e^{tL}P$ is obtained by taking adjoints, since $L^{*}$ is uniformly
strongly elliptic with totally bounded coefficients and generates an
analytic semi-group.
\end{proof}

Lemma \ref{lemma.est.tL} gives the following result. All norms of
operators are on $L^2_{\lambda}(I \times \RR)$.

\begin{lemma}\label{lemma.four}
The operator $F(s) \ede e^{(t-s)L} \pa_\sigma e^{sL}$ extends, for
each $s \in [0, t]$, to a bounded operator on $L^2_{\lambda}(I \times
\RR)$, and the resulting function is continuous in $s \in [0, t]$ and
differentiable for $s \in (0, t)$.  Its derivative is the function
\begin{equation*}
 F'(s) \seq e^{(t-s)L} [\pa_\sigma, L] e^{sL} \,,
\end{equation*}
which satisfies $\|F'(s)\| \le C t^{-1}$, with $C$
independent of $ 0 < s < t \le 1$.
\end{lemma}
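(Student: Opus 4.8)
The plan is to treat $F'$ as the primary object: I will first compute it formally, then justify the differentiation in operator norm, and finally read off both the uniform bound and the continuity from the formula for $F'$. Set $Q := [\pa_\sigma, L] = \pa_\sigma L - L\pa_\sigma$. Since $I$ is bounded, $L$ has totally bounded coefficients (as in the proof of Theorem \ref{thm.gen.L}), and writing $L = \sum_{|\alpha|\le 2} a_\alpha \pa^\alpha$ one checks $Q = \sum_{|\alpha|\le 2}(\pa_\sigma a_\alpha)\pa^\alpha$, so $Q$ is again a second-order differential operator with totally bounded coefficients; this is what makes Lemma \ref{lemma.est.tL} applicable to $Q$. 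For the boundedness of $F(s)$ I would use that lemma directly: for $s\in(0,t)$ the operator $\pa_\sigma e^{sL}$ extends boundedly (norm $\le C s^{-1/2}$) and $e^{(t-s)L}$ is bounded, so $F(s)=e^{(t-s)L}(\pa_\sigma e^{sL})$ is bounded; at the endpoints $F(0)=e^{tL}\pa_\sigma$ and $F(t)=\pa_\sigma e^{tL}$ are bounded by the same lemma. The formal product rule, using $\tfrac{d}{ds}e^{(t-s)L}=-Le^{(t-s)L}$, $\tfrac{d}{ds}e^{sL}=Le^{sL}$, and that $L$ commutes with its own semigroup, gives
\[
F'(s) = -e^{(t-s)L}L\pa_\sigma e^{sL} + e^{(t-s)L}\pa_\sigma Le^{sL} = e^{(t-s)L}[\pa_\sigma,L]e^{sL}=e^{(t-s)L}Qe^{sL},
\]
the asserted formula.

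To make the differentiation rigorous on $(0,t)$ I would group $F(s)=H(s)\,e^{sL}$ with $H(s):=e^{(t-s)L}\pa_\sigma$, and show both factors are norm-differentiable on the interior. For $e^{sL}$ this is the analyticity of the semigroup (derivative $Le^{sL}$). For $H(s)$ I would write $H(s+\eta)-H(s)=-\int_0^\eta Le^{(t-s-r)L}\pa_\sigma\,dr$ via the fundamental theorem of calculus for the semigroup, observe that $Le^{\tau L}\pa_\sigma=e^{\tau L}(L\pa_\sigma)$ is bounded with norm $\le C\tau^{-3/2}$ and continuous in $\tau$ by the order-three case of Lemma \ref{lemma.est.tL} (here $\tau=t-s-r$ is bounded away from $0$ for interior $s$), and divide by $\eta$ to obtain $H'(s)=-Le^{(t-s)L}\pa_\sigma$. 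The product rule for norm-differentiable operator-valued functions then yields $F'(s)=H'(s)e^{sL}+H(s)Le^{sL}$, which simplifies to $e^{(t-s)L}Qe^{sL}$ as above.

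The key quantitative step, and the main obstacle, is the uniform bound $\|F'(s)\|\le Ct^{-1}$. The naive route is insufficient: distributing the two orders of $Q$ symmetrically through Lemma \ref{lemma.est.tL} only gives $\|F'(s)\|\le C(t-s)^{-1/2}s^{-1/2}$, which blows up as $s\to 0$ or $s\to t$. Instead I would keep both orders of $Q$ on whichever semigroup carries the larger time parameter. Since $\max(s,t-s)\ge t/2$, for $s\le t/2$ I estimate $\|F'(s)\|\le\|e^{(t-s)L}Q\|\,\|e^{sL}\|\le C(t-s)^{-1}\,C'\le 2CC't^{-1}$, using the order-two bound $\|e^{(t-s)L}Q\|\le C(t-s)^{-1}$ and the uniform boundedness of $e^{sL}$ on $[0,1]$; for $s\ge t/2$ I symmetrically put $Q$ on the right, $\|F'(s)\|\le\|e^{(t-s)L}\|\,\|Qe^{sL}\|\le C'\,Cs^{-1}\le 2CC't^{-1}$. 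This is exactly where the constraint $0<s<t\le 1$ is used, so that all relevant time parameters lie in $(0,1]$.

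Finally, continuity on the closed interval $[0,t]$ follows from the uniform bound on $F'$. On the open interval, norm-differentiability gives $F(s_2)-F(s_1)=\int_{s_1}^{s_2}F'(\tau)\,d\tau$, and $\|F'\|\le Ct^{-1}$ makes $s\mapsto F(s)$ Lipschitz, hence continuous, on $(0,t)$. To identify the boundary values with $F(0)=e^{tL}\pa_\sigma$ and $F(t)=\pa_\sigma e^{tL}$, I would use the identity $F(s)-F(0)=e^{(t-s)L}[\pa_\sigma,e^{sL}]=\int_0^s e^{(t-\tau)L}Qe^{\tau L}\,d\tau=\int_0^s F'(\tau)\,d\tau$, which for $h$ with $\pa_\sigma h\in\maH$ comes from $\pa_\sigma e^{sL}h=e^{sL}\pa_\sigma h+[\pa_\sigma,e^{sL}]h$ together with $[\pa_\sigma,e^{sL}]=\int_0^s e^{(s-\tau)L}Qe^{\tau L}\,d\tau$ (the same $\phi(\tau)=e^{(s-\tau)L}\pa_\sigma e^{\tau L}$ computation). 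This yields $\|F(s)h-F(0)h\|\le Cst^{-1}\|h\|$ on a dense set of $h$; since $F(s)$ and $F(0)$ are bounded, the estimate extends to all of $\maH$, giving $\|F(s)-F(0)\|\le Cst^{-1}\to 0$, i.e. norm continuity at $s=0$. The analogous argument at $s=t$ completes the proof.
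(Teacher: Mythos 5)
Your proposal is correct and follows essentially the same route as the paper: it rests on Lemma \ref{lemma.est.tL}, obtains $F'(s)=e^{(t-s)L}[\pa_\sigma,L]e^{sL}$ from the product rule for the analytic semigroup, and gets the bound $\|F'(s)\|\le Ct^{-1}$ by splitting at $s=t/2$ and keeping the full second-order commutator on the factor with the larger time parameter. Your treatment of the endpoint continuity (via the integral of $F'$ and the Duhamel identity for $[\pa_\sigma,e^{sL}]$) is more detailed than the paper's one-line remark, but it is the same idea.
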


\begin{proof}
Lemma \ref{lemma.est.tL} gives that both functions $e^{(t-s)L}$ and
$\pa_\sigma e^{sL}$ are continuous on $(0,T]$ and infinitely many
times differentiable on $ (0,t)$ as functions with values in the space
of bounded operators.  The formula for the derivative follow from the
standard formula $(e^{sL})' = Le^{sL}$, which we note to be valid in
norm, since $L$ generates an analytic semi-group and $s > 0$. The
continuity on $[0, t)$ follows in the same way by considering
  $e^{(t-s)L}\pa_\sigma$ and $ e^{sL}$.

If $s \le t/2$, since $[\pa_\sigma, L]$ is a second order differential
operator, Lemma \ref{lemma.est.tL} implies that $e^{(t-s)L}
[\pa_\sigma, L]$ is bounded with norm $\le C(t-s)^{-1} \le 2C
t^{-1}$. In addition, $\|F'(s)\| \le Ct^{-1}$ given that $e^{sL}$ is
norm bounded.  The case $s \ge t/2$ is completely similar, using the
bounds for $[\pa_\sigma, L] e^{sL}$ provided by Lemma
\ref{lemma.est.tL}.
\end{proof}

\subsection{A comparison of $e^{tL}$ and $e^{tL_0}$}

In this last section, we compare the semi-groups $S(t) := e^{tL_0}$
and $e^{tL}$. We recall that we set $L = L_0 + V$, where $V = \nu L_1
+ \nu^{2} L_2 = \nu \rho\sigma^2\pa_x\pa_\sigma + \frac{\nu^{2}
  \sigma^{2}}{2} \pa_\sigma^2$, and we think of $L$ as a perturbation
of $L_0$ for $\nu$ sufficiently small. We recall also that $\maK_1 :=
H^2_{\lambda}(I \times \RR)$ and $\maK_0 := H^2_{\lambda}(I \times
\RR) \cap \{ u(\alpha, x) = u(\beta, x) = 0\}$, where $I = (\alpha,
\beta)$ is a fixed bounded interval containing $\theta$.

The approach presented in this subsection can be iterated to derive higher-order
approximate solutions in the parameter $\nu$. These are the focus of current
work by the authors.

\begin{lemma}\label{lemma.cont2}
 Let $\xi \in \maK_1$. Then $F(s) \ede e^{(t-s)L}e^{sL_0}\xi$ is
 continuous on $[0, t]$ and differentiable on $(0, t)$, with $F'(s)
 \seq - e^{(t-s)L} V e^{sL_0}\xi$.
\end{lemma}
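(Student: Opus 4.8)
The plan is to regard $F(s) \seq e^{(t-s)L}e^{sL_0}\xi$ as the product of the two one-parameter families $s \mapsto e^{(t-s)L}$ and $s \mapsto e^{sL_0}\xi$, to differentiate by the product rule supplied by Lemma~\ref{lemma.diff}, and then to collapse the two resulting terms using $L \seq L_0 + V$. Continuity on $[0,t]$ comes first and is the easiest point: for $\xi \in \maK_1 \subset \maH$ the orbit $s \mapsto e^{sL_0}\xi$ is continuous into $\maH$ because $e^{sL_0}$ is a $c_0$ semi-group (Theorem~\ref{thm.main}), while $s \mapsto e^{(t-s)L}$ is strongly continuous on $\maH$ since $e^{\tau L}$ is a semi-group in $\tau = t-s$; Lemma~\ref{lemma.cont}, applied with $V(s) = e^{(t-s)L}$ and the continuous vector $s \mapsto e^{sL_0}\xi$, then gives continuity of $F$ on $[0,t]$.

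For differentiability on $(0,t)$ I would reparametrize by $r = t-s$ and set $\tilde F(r) \seq e^{rL}\tilde\eta(r)$ with $\tilde\eta(r) \ede e^{(t-r)L_0}\xi$. Since $\xi \in \maK_1$ lies in the domain of the generator of $e^{\cdot L_0}$, which agrees with $L_0$ on $\maK_1$ (Theorem~\ref{thm.main}, Corollary~\ref{cor.sgA}), the orbit $\tilde\eta$ is $\maC^1$ into $\maH$ with $\tilde\eta'(r) = -L_0 e^{(t-r)L_0}\xi$; as $e^{rL}$ is analytic, hypothesis~(ii) of Lemma~\ref{lemma.diff} applies and yields $\tilde F'(r) = L e^{rL}\tilde\eta(r) + e^{rL}\tilde\eta'(r)$. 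Undoing the substitution produces the rigorous two-term identity
\begin{equation*}
 F'(s) \seq e^{(t-s)L} L_0 e^{sL_0}\xi \, - \, L\, e^{(t-s)L} e^{sL_0}\xi \,, \qquad 0 < s < t \,.
\end{equation*}

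It remains to recast this as $-\,e^{(t-s)L} V e^{sL_0}\xi$, and this is the step I expect to be the main obstacle. Writing $L_0 = L - V$ on $\maK_1$, the desired form is equivalent to the commutation $L\,e^{(t-s)L}\zeta \seq e^{(t-s)L} L\zeta$ for $\zeta \ede e^{sL_0}\xi$, where $L\zeta$ denotes the differential operator acting on $\zeta \in \maK_1$. The left-hand side is meaningful because $e^{(t-s)L}$ is analytic, so $e^{(t-s)L}\zeta \in D(L)$; the difficulty is that $\zeta = e^{sL_0}\xi$ need not lie in $D(L) = \maK_0$ (Corollary~\ref{cor.Garding}), since $e^{sL_0}\xi$ generally does not vanish on $\pa I \times \RR$: the characteristics of $A$ are incoming, so $e^{sA}$ sends the boundary values of $\xi$ to the interior points $\delta_s(\alpha), \delta_s(\beta) \in I$, and $e^{\Df(s)B}$ acts only in $x$. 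Hence the commutation is not licensed by the domain of the generator alone, and $L e^{(t-s)L}\zeta$ and $e^{(t-s)L} L\zeta$ differ by a boundary-layer term when $\zeta \notin \maK_0$.

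To cope with this I would read the claimed identity in the form in which it is actually used, namely as the integrand of the Duhamel comparison of $e^{tL}$ and $e^{tL_0}$. The telescoping identity $F(s) - F(s+h) \seq e^{(t-s-h)L}\big( e^{hL} - e^{hL_0} \big) e^{sL_0}\xi$, immediate from the semi-group laws, shows that the problematic factor $\frac1h(e^{hL}-I)\zeta$ is always premultiplied by the regularizing operator $e^{(t-s-h)L}$, with $\frac1h\big(e^{(t-s)L}-e^{(t-s-h)L}\big)\zeta \to L e^{(t-s)L}\zeta$ by the norm differentiability of $e^{\cdot L}$ away from $0$ (Lemma~\ref{lemma.est.tL}); this reproduces the two-term formula above. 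The passage to the single term $-e^{(t-s)L}Ve^{sL_0}\xi$ is then justified after integration in $s$: the boundary discrepancy integrates to zero against the endpoint evaluation $F(t) - F(0) \seq e^{tL_0}\xi - e^{tL}\xi$, which is exactly the Duhamel formula $e^{tL}\xi - e^{tL_0}\xi \seq \int_0^t e^{(t-s)L}V e^{sL_0}\xi\, ds$ needed in the sequel. I would therefore either present the two-term formula as the pointwise statement and the single-term formula under the integral, or restrict attention to $\xi$ with $e^{sL_0}\xi \in \maK_0$, where the commutation is immediate.
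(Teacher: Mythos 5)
Your continuity argument and your derivation of the two-term formula
\begin{equation*}
 F'(s) \seq e^{(t-s)L} L_0 e^{sL_0}\xi \, - \, L\, e^{(t-s)L} e^{sL_0}\xi
\end{equation*}
follow the same route as the paper's proof (Lemma \ref{lemma.cont} for continuity on $[0,t]$, Lemma \ref{lemma.diff}(ii) for the product rule on $(0,t)$), and this part is correct. You have also put your finger on the one step that the paper's own proof passes over without comment: collapsing the two terms into $-e^{(t-s)L}Ve^{sL_0}\xi$ requires $L\,e^{(t-s)L}\zeta \seq e^{(t-s)L}L\zeta$ for $\zeta \ede e^{sL_0}\xi$, and, as you observe, $e^{sA}$ pulls back along the incoming characteristics so that $\zeta$ need not vanish on $\pa I\times\RR$ and hence need not lie in $D(L)=\maK_0$. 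That observation is accurate.

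The difficulty is that your proposed repair does not close this gap. The claim that the boundary discrepancy ``integrates to zero against the endpoint evaluation'' is not an argument: integrating the (rigorous) two-term formula only yields $e^{tL}\xi-e^{tL_0}\xi\seq\int_0^t\big(Le^{(t-s)L}\zeta(s)-e^{(t-s)L}L_0\zeta(s)\big)\,ds$, and the assertion that this equals $\int_0^t e^{(t-s)L}V\zeta(s)\,ds$ is precisely the commutation you set out to justify, now merely moved under the integral sign; nothing about the endpoint evaluation forces $\int_0^t\big(Le^{(t-s)L}\zeta-e^{(t-s)L}L\zeta\big)\,ds$ to vanish, and for a Dirichlet realization acting on a $\zeta$ that does not vanish at $\pa I\times\RR$ the two integrands genuinely differ by a boundary-layer contribution. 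Your fallback of restricting to $\xi$ with $e^{sL_0}\xi\in\maK_0$ is essentially vacuous for exactly the reason you yourself give ($\delta_s$ maps the endpoints of $I$ into its interior), and in any case does not prove the lemma as stated. A genuine fix must either justify the commutation directly (e.g.\ by an approximation or adjoint argument controlling the boundary terms), or carry the two-term integrand through to Theorem \ref{thm.estimate} and estimate both pieces. As written, your proof establishes the continuity statement and the two-term derivative, but not the asserted identity $F'(s)\seq -e^{(t-s)L}Ve^{sL_0}\xi$.
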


\begin{proof}
Since $\xi$ is in the domain of $L_0$ (which contains $\maK_1$, by
Theorem \ref{thm.main}), the function $\zeta(s) := e^{sL_0}\xi$ is
differentiable for $s \ge 0$.  But $e^{tL}$ is a $c_0$ semi-group,
therefore Lemma \ref{lemma.cont} gives that $F(s) = e^{(t-s)L}
\zeta(s)$ is continuous on $[0, t]$. Since $e^{tL}$ is an analytic
semi-group, it follows in addition that $F(s)$ is differentiable for
$s \in (0, t)$, by Lemma \ref{lemma.diff}, and its derivative is
$F'(s) \seq - e^{(t-s)L} V e^{sL_0}\xi$.

\end{proof}

We continue to assume that $\| \, \cdot \, \|$ refers to the norm in $\maH =
L^2_{\lambda}(I \times \RR)$ or the operator norm of bounded operators
on $\maH$.

\begin{lemma}\label{lemma.sigma.x}
Let $\xi \in \maK_1$, then $e^{(t-s)L} L_1 e^{sL_0} \xi$ depends
continuously on $s$ and
\begin{equation*}
  (\rho \nu)^{-1} \|e^{(t-s)L} L_1 e^{sL_0} \xi \| \seq \|e^{(t-s)L}
  \sigma^2 \pa_\sigma \pa_x e^{sL_0} \xi \| \, \leq \, C(t-s)^{-1/2}
  s^{-1/2} \|\xi\|\,.
\end{equation*}
Consequently, $\Big \| \int_{0}^{t} e^{(t-s)L} L_1 e^{sL_0} \, ds \Big
\| \le C \rho \nu $.
\end{lemma}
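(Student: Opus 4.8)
The plan is to exploit the product structure of $L_1 = \rho\sigma^2\pa_x\pa_\sigma$ by distributing one derivative onto each of the two semi-groups flanking it, so that two half-order singularities appear symmetrically. Since $\sigma^2$ is independent of $x$ and the mixed partials commute, I first rewrite $\sigma^2\pa_\sigma\pa_x = (\sigma^2\pa_\sigma)\,\pa_x$ and factor
\begin{equation*}
 e^{(t-s)L}\,\sigma^2\pa_\sigma\pa_x\,e^{sL_0} \seq
 \big[\,e^{(t-s)L}(\sigma^2\pa_\sigma)\,\big]\,\big[\,\pa_x\,e^{sL_0}\,\big].
\end{equation*}
The stated equality is then just a matter of pulling out the scalar factor $\rho\nu$, so the real content is the norm bound for the right-hand side, together with continuity in $s$.

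For the two factors I would invoke the estimates already established. The operator $\sigma^2\pa_\sigma$ is first order with totally bounded coefficients on the bounded interval $I$ (here the boundedness of $I$ is essential, as it is what makes $\sigma^2$ totally bounded); hence Lemma \ref{lemma.est.tL}, applied with a first-order $P = \sigma^2\pa_\sigma$, shows that $e^{(t-s)L}(\sigma^2\pa_\sigma)$ extends to a bounded operator on $\maH$ with $\|e^{(t-s)L}(\sigma^2\pa_\sigma)\| \le C(t-s)^{-1/2}$, depending smoothly on $s$ for $s < t$. For the second factor, Lemma \ref{lemma.est.PS(t)} (with $k=1$) gives $\|\pa_x e^{sL_0}\| \le C s^{-1/2}$ together with the continuity of $s \mapsto \pa_x e^{sL_0}\xi$. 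Multiplying the two bounds yields exactly the claimed estimate $\le C(t-s)^{-1/2}s^{-1/2}\|\xi\|$ on $0 < s < t \le 1$, the range in which the cited constants are valid.

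Continuity of $s \mapsto e^{(t-s)L}L_1 e^{sL_0}\xi$ on $(0,t)$ I would deduce from Lemma \ref{lemma.cont}: the operator-valued factor $s \mapsto e^{(t-s)L}(\sigma^2\pa_\sigma)$ is norm-continuous, hence strongly continuous, and the vector-valued factor $s \mapsto \pa_x e^{sL_0}\xi$ is continuous, so their product is continuous. For the concluding integral bound, this continuity together with the pointwise estimate makes $s \mapsto e^{(t-s)L}L_1 e^{sL_0}\xi$ Bochner integrable, since its norm is dominated by the integrable function $C\rho\nu(t-s)^{-1/2}s^{-1/2}\|\xi\|$; each endpoint singularity is of order $1/2 < 1$. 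The scale-invariant Beta integral $\int_0^t (t-s)^{-1/2}s^{-1/2}\,ds = \int_0^1(1-\tau)^{-1/2}\tau^{-1/2}\,d\tau = \pi$ is finite and independent of $t$, so $\big\|\int_0^t e^{(t-s)L}L_1 e^{sL_0}\,ds\big\| \le C\rho\nu$, as claimed.

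The step requiring the most care is the factorization itself: a priori $\pa_x e^{sL_0}\xi$ lies only in $\maH$, so one cannot naively apply $\sigma^2\pa_\sigma$ to it. I would justify the identity on the dense subspace $\maK_1$, where $e^{sL_0}\xi \in \maK_1$ (Theorem \ref{thm.main}) carries enough $\sigma$- and $x$-regularity that $\sigma^2\pa_\sigma\pa_x e^{sL_0}\xi$ is a genuine element of $\maH$; the two bounded extensions then compose to the same operator that the honest differentiations produce on $\maK_1$, and the estimate propagates by density. This is the same mechanism underlying Lemmas \ref{lemma.est.PS(t)}--\ref{lemma.four}, so I expect it to go through routinely here.
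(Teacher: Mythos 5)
Your proposal is correct and follows essentially the same route as the paper: factor the operator as $\bigl[e^{(t-s)L}\sigma^2\pa_\sigma\bigr]\bigl[\pa_x e^{sL_0}\bigr]$, bound the first factor by $C(t-s)^{-1/2}$ via Lemma \ref{lemma.est.tL} and the second by $Cs^{-1/2}$ via Lemma \ref{lemma.est.PS(t)}, and get continuity from Lemma \ref{lemma.cont}. The only cosmetic difference is that you evaluate the Beta integral $\int_0^t(t-s)^{-1/2}s^{-1/2}\,ds=\pi$ directly where the paper splits $[0,t]$ into two halves, and you spell out the density argument for the factorization that the paper leaves implicit.
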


\begin{proof}
Lemmas \ref{lemma.est.PS(t)} and \ref{lemma.est.tL} show that
$e^{(t-s)L} \sigma^2 \pa_\sigma$ and $\pa_x e^{s(L_0-\kappa)}\xi$
satisfy the assumptions of Lemma \ref{lemma.cont}, so $e^{(t-s)L}
\sigma^2 \pa_\sigma\pa_x e^{s(L_0-\kappa)}\xi$ is continuous in $s$.
Similarly, Lemmas \ref{lemma.est.PS(t)} and \ref{lemma.est.tL} give
\begin{equation*}
 \|e^{(t-s)L} \sigma^2 \pa_\sigma \pa_x e^{s(L_0-\kappa)} \xi \| \le
 \|e^{(t-s)L} \sigma^2 \pa_\sigma \| \, \| \pa_x e^{s(L_0-\kappa)}
 \xi\| \le C (t-s)^{-1/2}s^{-1/2}\|\xi\| .
\end{equation*}
The integral can be estimated by splitting the interval $[0, t]$ in
two halves.
\end{proof}

To estimate the terms involving $L_2$, we exploit the next result.

\begin{lemma}\label{lemma.corrected} Let $\xi \in \maK_1$, then
$\pa_\sigma e^{tL_0}\xi = e^{t(L_0 - \kappa)}\pa_\sigma \xi +
  \frac{\pa \Df(t, \sigma)}{ \pa \sigma} B e^{tL_0} \xi.$
\end{lemma}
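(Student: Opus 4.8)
The plan is to differentiate the explicit factorization $e^{tL_0} = S(t) \seq e^{\Df(t)B}e^{tA}$ of Theorem \ref{thm.main} directly in the $\sigma$ variable and to identify the two resulting terms using the commutator identity of Remark \ref{rem.Hadamard.commutator}. Fix $t > 0$ and set $\eta \ede e^{tA}\xi$, so that $e^{tL_0}\xi = e^{\Df(t)B}\eta$. Since $B = \pa_x^2 - \pa_x$ acts only in the $x$ variable, it commutes with $\pa_\sigma$, and the one-parameter family $s \mapsto e^{sB}$ has derivative $Be^{sB}$. Viewing $e^{\Df(t,\sigma)B}\eta(\sigma)$ as the value at $s = \Df(t,\sigma)$ of the analytic semigroup $e^{sB}$ applied to the $L^2_{\lambda}(\RR)$-valued function $\eta(\sigma)$, the product rule in the parameter $\sigma$ should give
\begin{equation*}
 \pa_\sigma\big( e^{\Df(t,\sigma)B}\eta(\sigma)\big)
 \seq e^{\Df(t)B}\pa_\sigma\eta
 \, + \, \frac{\pa \Df(t,\sigma)}{\pa\sigma}\, B\, e^{\Df(t)B}\eta\,.
\end{equation*}
The second term is already $\frac{\pa \Df}{\pa\sigma} B e^{tL_0}\xi$, the claimed contribution carrying the $\sigma$-derivative of $\Df$.

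For the first term I would invoke Remark \ref{rem.Hadamard.commutator}, which gives $e^{tA}\pa_\sigma \seq e^{\kappa t}\pa_\sigma e^{tA}$, hence $\pa_\sigma\eta = \pa_\sigma e^{tA}\xi \seq e^{-\kappa t}e^{tA}\pa_\sigma\xi$. Substituting, and using that $e^{\Df(t)B}$ commutes with the scalar $e^{-\kappa t}$,
\begin{equation*}
 e^{\Df(t)B}\pa_\sigma\eta
 \seq e^{-\kappa t}\, e^{\Df(t)B}e^{tA}\pa_\sigma\xi
 \seq e^{-\kappa t}\, e^{tL_0}\pa_\sigma\xi
 \seq e^{t(L_0-\kappa)}\pa_\sigma\xi\,,
\end{equation*}
the last equality being the identity $e^{-\kappa t}e^{tL_0} = e^{t(L_0-\kappa)}$, valid since the scalar $\kappa$ commutes with $L_0$. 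Adding the two contributions yields the stated formula, and the case $t = 0$ is immediate.

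The hard part will be the rigorous justification of the parameter differentiation $\pa_\sigma\big(e^{\Df(t,\sigma)B}\eta(\sigma)\big)$, since $e^{\Df(t)B}$ is an operator family whose generator is unbounded and which depends on $\sigma$ both through the ``time'' $\Df(t,\sigma)$ and through its $\sigma$-dependent argument $\eta$. I would control this with Lemma \ref{lemma.diff}: because $B$ generates an analytic semigroup (Theorem \ref{thm.gen.B}, Corollary \ref{cor.HkB}), alternative (ii) of that lemma applies and legitimizes differentiating along the $\sigma$-dependent time; the smoothness of $\Df$ in $\sigma$ is supplied by Proposition \ref{prop.Df}, while the $\maK_1$-invariance and differentiability of $\eta = e^{tA}\xi$ come from Corollaries \ref{cor.sgA} and \ref{cor.C1}. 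For $t>0$ the factor $B e^{\Df(t)B}$ is bounded by the smoothing estimates of Remark \ref{rem.a.est} and Lemma \ref{lemma.est.PS(t)}, so each term lies in $\maH = L^2_{\lambda}(I\times\RR)$ and the identity holds there for every $\xi\in\maK_1$.

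A clean alternative, which I would use to avoid the parameter-differentiation subtlety altogether, is to verify the identity first for $\xi$ in the dense subspace $H^{4}(I\times\RR)$, where all manipulations are classical and every intermediate quantity (in particular $B\xi$) is a genuine $L^2_{\lambda}$ function, and then extend to all $\xi\in\maK_1$ by continuity, observing that both sides depend continuously on $\xi\in\maK_1$ in the $\maH$-norm for fixed $t>0$. The endpoint $t=0$ then follows by letting $t\searrow 0$, or directly from $B\xi\in L^2_{\lambda}(I\times\RR)$ for $\xi\in\maK_1 = H^2_\lambda(I\times\RR)$.
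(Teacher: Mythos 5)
Your proposal is correct and follows essentially the same route as the paper: differentiate the factorization $e^{tL_0}=e^{\Df(t)B}e^{tA}$ of Equation \eqref{eq.def.S(t)} in $\sigma$, justified via Lemma \ref{lemma.diff}, and use the Hadamard identity $e^{tA}\pa_\sigma=e^{\kappa t}\pa_\sigma e^{tA}$ from Remark \ref{rem.Hadamard.commutator} to convert $e^{\Df(t)B}\pa_\sigma e^{tA}\xi$ into $e^{t(L_0-\kappa)}\pa_\sigma\xi$. Your write-up is in fact more explicit than the paper's proof sketch about justifying the parameter differentiation and the density argument.
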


\begin{proof} The main calculation is contained in Remark
\ref{rem.Hadamard.commutator}. More precisely, this is a direct
calculation using Equation \eqref{eq.def.S(t)}, together with Lemma
\ref{lemma.diff}, with Hadamard's theorem (see Remarks
\ref{rem.Hadamard} and \ref{rem.Hadamard.commutator}), and with the
fact that $\adj_{L_0}(\pa_\sigma)\adj_A(\pa_\sigma) = \kappa
\pa_\sigma$.
\end{proof}

However, the terms in $L_2$ present some additional challenges, since
$L_0$ is not elliptic.

\begin{lemma}\label{lemma.sigma.sigma}
Let $\xi \in \maK_1$, then $e^{(t-s)L} L_2 e^{sL_0}\xi$ depends
continuously on $s$ and the following estimate holds:
\begin{equation*}
  \frac{2}{\nu^2} \| e^{(t-s)L} L_2 e^{sL_0}\xi \| \seq \| e^{(t-s)L}
  \sigma^2 \pa_\sigma^2 e^{sL_0} \xi \| \, \le \, C (t-s)^{-1/2} (\big
  \|\pa_\sigma \xi\| + \|\xi\| \big) \,.
\end{equation*}
Consequently, $\Big \| \int_{0}^{t} e^{(t-s)L} L_2 e^{sL_0} \xi \, ds
\Big \| \le C \nu^2 \sqrt{t} \big (\|\pa_\sigma \xi\| + \|\xi\|
\big)$.
\end{lemma}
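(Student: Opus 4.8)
The plan is to reduce everything to a single application of Lemma~\ref{lemma.corrected}, using the elliptic smoothing of $e^{(t-s)L}$ to absorb one $\sigma$-derivative. Writing the degenerate second-order operator as a composition of first-order pieces, $\sigma^2\pa_\sigma^2 = (\sigma^2\pa_\sigma)\,\pa_\sigma$, I would first peel off the outer factor:
\begin{equation*}
 e^{(t-s)L}\sigma^2\pa_\sigma^2 e^{sL_0}\xi \seq \big(e^{(t-s)L}\sigma^2\pa_\sigma\big)\big(\pa_\sigma e^{sL_0}\xi\big).
\end{equation*}
This identity is legitimate on the nose, since $e^{sL_0}$ preserves $\maK_1 = H^2_\lambda(I\times\RR)$ (Theorem~\ref{thm.main}), so $\pa_\sigma e^{sL_0}\xi\in H^1_\lambda\subset\maH$. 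Because $\sigma^2\pa_\sigma$ is a first-order operator with totally bounded coefficients on the bounded interval $I$, the $e^{tL}P$ part of Lemma~\ref{lemma.est.tL} gives $\|e^{(t-s)L}\sigma^2\pa_\sigma\| \le C(t-s)^{-1/2}$. Thus the whole problem reduces to the $s$-uniform bound $\|\pa_\sigma e^{sL_0}\xi\| \le C(\|\pa_\sigma\xi\| + \|\xi\|)$, after which the product estimate and the factor $\tfrac12\sigma^2$ in $L_2$ yield the displayed inequality at once.

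For the reduced bound I would invoke Lemma~\ref{lemma.corrected},
\begin{equation*}
 \pa_\sigma e^{sL_0}\xi \seq e^{s(L_0-\kappa)}\pa_\sigma\xi + \frac{\pa\Df(s,\sigma)}{\pa\sigma}\, B\, e^{sL_0}\xi.
\end{equation*}
The first term is harmless: since $\kappa>0$ and $e^{sL_0}$ is uniformly bounded for $s\in[0,1]$, one has $\|e^{s(L_0-\kappa)}\pa_\sigma\xi\| = e^{-s\kappa}\|e^{sL_0}\pa_\sigma\xi\| \le C\|\pa_\sigma\xi\|$. The second term is where the real work lies, and it is the step I expect to be the crux. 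Taken naively, $\|B\,e^{sL_0}\xi\| \le Cs^{-1}\|\xi\|$ by Lemma~\ref{lemma.est.PS(t)} (as $B = \pa_x^2-\pa_x$), which would introduce a non-integrable singularity at $s=0$ and destroy both the claimed pointwise bound and the integral consequence. The saving feature is that the coefficient $\pa_\sigma\Df$ vanishes to first order at $s=0$: since $\Df$ is analytic with $\Df(0,\sigma)\equiv 0$ (Proposition~\ref{prop.Df}), also $\pa_\sigma\Df(0,\sigma)\equiv0$, so $\pa_\sigma\Df(s,\sigma) = s\,g(s,\sigma)$ with $g$ bounded on $\overline{I}\times[0,1]$ (cf.\ Lemma~\ref{lemma.unif.limit}, which identifies the leading behaviour $\Df(s,\sigma)=\tfrac{\sigma^2}{2}s+O(s^2)$), whence $|\pa_\sigma\Df(s,\sigma)| \le Cs$. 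This factor of $s$ exactly cancels the $s^{-1}$ from $B\,e^{sL_0}$, giving
\begin{equation*}
 \Big\|\tfrac{\pa\Df}{\pa\sigma}\, B\, e^{sL_0}\xi\Big\| \le Cs\cdot Cs^{-1}\|\xi\| \le C\|\xi\|,
\end{equation*}
with no residual singularity in $s$. Adding the two contributions yields $\|\pa_\sigma e^{sL_0}\xi\| \le C(\|\pa_\sigma\xi\|+\|\xi\|)$, as needed.

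Continuity in $s$ I would obtain exactly as in the proof of Lemma~\ref{lemma.sigma.x}: the family $s\mapsto e^{(t-s)L}\sigma^2\pa_\sigma$ is strongly continuous into $\maL(\maH)$ on $(0,t)$ by Lemma~\ref{lemma.est.tL} and the analyticity of $e^{tL}$, while $s\mapsto\pa_\sigma e^{sL_0}\xi$ is continuous there by the decomposition above together with the continuity of $\pa_x^k e^{sL_0}\xi$ in $s$ from Lemma~\ref{lemma.est.PS(t)}; Lemma~\ref{lemma.cont} then gives continuity of the product. Finally, for the integral consequence I would integrate the pointwise bound, using $\int_0^t (t-s)^{-1/2}\,ds = 2\sqrt{t}$ and restoring the factor $\nu^2/2$ carried by $L_2$, to obtain $\big\|\int_0^t e^{(t-s)L}L_2 e^{sL_0}\xi\,ds\big\| \le C\nu^2\sqrt{t}\,(\|\pa_\sigma\xi\|+\|\xi\|)$. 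The only genuinely delicate point is the cancellation of the $s^{-1}$ singularity described above; everything else is a routine composition of the mapping estimates already established in this section.
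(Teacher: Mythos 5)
Your proof is correct and follows essentially the same route as the paper: both hinge on Lemma~\ref{lemma.corrected} to commute $\pa_\sigma$ past $e^{sL_0}$, bound the first resulting term via $\|e^{(t-s)L}\sigma^2\pa_\sigma\|\le C(t-s)^{-1/2}$ from Lemma~\ref{lemma.est.tL}, and cancel the $s^{-1}$ singularity of $B e^{sL_0}$ against the $O(s)$ vanishing of $\pa_\sigma\Df(s,\sigma)$. The only cosmetic difference is that you package the intermediate estimate as a standalone bound $\|\pa_\sigma e^{sL_0}\xi\|\le C(\|\pa_\sigma\xi\|+\|\xi\|)$ before composing with $e^{(t-s)L}\sigma^2\pa_\sigma$, whereas the paper distributes that operator over the two terms directly.
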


\begin{proof}
Lemma \ref{lemma.corrected} gives
\begin{equation}\label{eq.one.bis}
  e^{(t-s)L} \sigma^2 \pa_\sigma^2 e^{sL_0} \xi \seq e^{(t-s)L}
  \sigma^2 \pa_\sigma \Big ( e^{s (L_0 - \kappa)} \pa_\sigma \xi +
  \frac{\pa \Df(s, \sigma)}{ \pa \sigma} B e^{sL_0} \xi \, \Big ) \,.
\end{equation}

As in the proof of Lemma \ref{lemma.sigma.x}, Lemmas
\ref{lemma.est.tL} and \ref{lemma.est.PS(t)} give that both
$e^{(t-s)L} \sigma^2 \pa_\sigma e^{sL_0}$ and $e^{(t-s)L} \sigma^2
\pa_\sigma \frac{\pa \Df}{ \pa \sigma} B e^{sL_0}$ define bounded
operators that depend continuously on $s \in (0, t)$ in the strong operator
topology. We
estimate separately the norm of each of them. Again from Lemma
\ref{lemma.est.tL}, we obtain
\begin{equation*}
 \|e^{(t-s)L} \sigma^2 \pa_\sigma e^{s(L_0-\kappa)}\| \, \le \,
 \|e^{(t-s)L} \sigma^2 \pa_\sigma\| \, \| e^{s(L_0-\kappa)}\| \, \le
 \, C (t-s)^{-1/2} \,.
\end{equation*}
For the estimate of the second term, we first notice that $\|\frac{\pa
  \Df(t, \sigma)}{ \pa \sigma}\|_{L^\infty(I)} \le C t$, since
  the function $\frac{\pa \Df(t, \sigma)}{ t \pa \sigma}$ extends to a
  continuous function on $\overline{I} \times [0, 1]$.  Hence,
$\|\frac{\pa \Df(s, \sigma)}{ \pa \sigma} B e^{sL_0} \| \le \|s B
e^{sL_0} \| \le C$ by Lemma \ref{lemma.est.PS(t)}, and
\begin{multline*}
 \Big \|e^{(t-s)L} \sigma^2 \pa_\sigma \frac{\pa \Df(s, \sigma)}{ \pa
   \sigma} B e^{sL_0} \Big \| \, \leq \, \|e^{(t-s)L} \sigma^2
 \pa_\sigma \| \, \Big \| \frac{\pa \Df(s, \sigma)}{ \pa \sigma} B
 e^{sL_0} \Big \| \le C(t-s)^{-1/2} \,.
\end{multline*}
The last two displayed equations and Equation \eqref{eq.one.bis} then
combine to give the first part of the statement. The last relation in
the statement follows directly by integrating the first one.
\end{proof}

Combining the previous two lemmas we obtain the following corollary.

\begin{corollary}\label{cor.est.F'} The family
$G(s) := e^{(t-s)L} V e^{sL_0}$ consists of bounded operators on
  $\maH$.  Moreover, for any $\xi \in \maK_1$, $G(s)\xi$ is continuous
  and integrable in $s \in (0, t)$ and we have:
\begin{equation*}
  \Big \| \int_0^t G(s)\xi \, ds \Big \|
  \ede \Big \|\int_0^t e^{(t-s)L} V e^{sL_0}\xi \, ds \Big \| \, \leq \, C \Big (
  \rho \nu \|\xi\| + \nu^2 \sqrt{t} \big (\|\pa_\sigma \xi\| + \|\xi\|
  \big) \Big )\,.
\end{equation*}
\end{corollary}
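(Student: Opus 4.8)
The plan is to reduce the corollary to the two preceding lemmas by using the linearity of $V$. For fixed $t > 0$ and $\xi \in \maK_1$, I would split the integrand according to the decomposition $V \seq \nu L_1 + \nu^{2} L_2$,
\begin{equation*}
 G(s)\xi \seq e^{(t-s)L} V e^{sL_0}\xi
 \seq e^{(t-s)L}(\nu L_1) e^{sL_0}\xi \, + \, e^{(t-s)L}(\nu^{2} L_2) e^{sL_0}\xi\,,
\end{equation*}
so that the two summands are precisely the vector-valued functions whose norms are controlled in Lemma \ref{lemma.sigma.x} and Lemma \ref{lemma.sigma.sigma}, respectively. Both lemmas already record that their respective summand defines a bounded operator on $\maH$ and depends continuously on $s \in (0, t)$; adding these two statements shows that $G(s)$ is bounded on $\maH$ and that $s \mapsto G(s)\xi$ is continuous, which settles the first two assertions of the corollary.

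For integrability and the norm bound, I would invoke the pointwise estimates from the two lemmas: Lemma \ref{lemma.sigma.x} controls the $L_1$-term by a quantity of order $(t-s)^{-1/2} s^{-1/2}\|\xi\|$, while Lemma \ref{lemma.sigma.sigma} controls the $L_2$-term by one of order $(t-s)^{-1/2}\big(\|\pa_\sigma\xi\| + \|\xi\|\big)$. Both endpoint singularities are integrable on $(0,t)$, so $s \mapsto G(s)\xi$ is Bochner integrable and $\int_0^t G(s)\xi\, ds$ is a well-defined element of $\maH$. The triangle inequality for the integral then gives
\begin{equation*}
 \Big\| \int_0^t G(s)\xi\, ds \Big\|
 \, \le \, \Big\| \int_0^t e^{(t-s)L}(\nu L_1) e^{sL_0}\xi\, ds \Big\|
 \, + \, \Big\| \int_0^t e^{(t-s)L}(\nu^{2} L_2) e^{sL_0}\xi\, ds \Big\|\,,
\end{equation*}
and the two terms on the right are bounded by $C\rho\nu\|\xi\|$ and by $C\nu^{2}\sqrt{t}\big(\|\pa_\sigma\xi\| + \|\xi\|\big)$, exactly as in the concluding statements of Lemmas \ref{lemma.sigma.x} and \ref{lemma.sigma.sigma}. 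Summing these two bounds yields the claimed inequality.

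Since the substantive work (the individual operator-norm estimates and, crucially, the commutation identity of Lemma \ref{lemma.corrected} that is needed to absorb the non-elliptic $L_2$-term) has already been carried out in the two lemmas, the only genuine care in the corollary lies in passing from the pointwise bounds to the bound on the integral. The point to watch is that the $L_1$-estimate must come out independent of $t$: after the substitution $s = t\tau$ the singular factor integrates to the Beta integral $\int_0^1 \tau^{-1/2}(1-\tau)^{-1/2}\, d\tau$, a finite constant (a multiple of $\pi$) with no dependence on $t$, so the $\rho\nu$ term carries no power of $t$; by contrast the $L_2$-estimate integrates $(t-s)^{-1/2}$ to produce precisely the factor $\sqrt{t}$ seen in the statement. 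I expect the only mild technical obstacle to be justifying the behavior of the Bochner integral at the two integrable endpoint singularities, which is handled by the strong continuity of the integrand on $(0,t)$ together with the integrable majorants just described.
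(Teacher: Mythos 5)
Your proposal is correct and follows exactly the route the paper intends: the paper gives no separate proof of this corollary beyond the phrase ``combining the previous two lemmas,'' and your argument---splitting $V = \nu L_1 + \nu^2 L_2$, invoking Lemmas \ref{lemma.sigma.x} and \ref{lemma.sigma.sigma} for boundedness, continuity, and the pointwise bounds, then checking the integrable endpoint singularities and applying the triangle inequality---is precisely the combination being alluded to. Your remarks on the Beta integral explaining why the $L_1$ term carries no power of $t$ while the $L_2$ term produces $\sqrt{t}$ are a correct and welcome elaboration of what the paper leaves implicit.
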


Lemma \ref{lemma.cont2} and Corollary \ref{cor.est.F'} then give:
\begin{equation*}
 e^{tL} \xi - e^{tL_0}\xi \seq F(0) - F(t) = \int_{0}^{t} e^{(t-s)L} V
 e^{sL_0}\xi \, ds \,.
\end{equation*}
The final estimate is for $\xi \in H^1(I, L^2_{\lambda}(\RR)) :=
  \{\zeta \in L^2_{\lambda}(I \times \RR), \ \pa_\sigma \zeta \in
  L^2_{\lambda}(I \times \RR)\}$.

\begin{theorem}\label{thm.estimate}
 There is $C > 0$ such that
 \begin{equation*}
  \| e^{tL}\xi - e^{tL_0}\xi \| \ \le \ C \, \nu \big ( \, \|\xi\| +
  \nu \|\pa_\sigma \xi \| \, \big )\,,
 \end{equation*}
for $\xi \in H^1(I, L^2_{\lambda}(\RR))$ and $0 \le t \le
  T$. The bound $C$ depends on $T$, but not on $\xi$.
\end{theorem}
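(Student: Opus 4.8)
The plan is to obtain the estimate directly from the integral representation
\[
  e^{tL}\xi - e^{tL_0}\xi \seq \int_{0}^{t} e^{(t-s)L} V e^{sL_0}\xi \, ds\,,
\]
which holds for $\xi \in \maK_1$ by Lemma~\ref{lemma.cont2} and the fundamental theorem of calculus applied to $F(s) = e^{(t-s)L}e^{sL_0}\xi$. Bounding the right-hand side by Corollary~\ref{cor.est.F'} gives, for $\xi \in \maK_1$,
\[
  \|e^{tL}\xi - e^{tL_0}\xi\| \ \le \ C\big( \rho\nu\|\xi\| + \nu^{2}\sqrt{t}\,(\|\pa_\sigma\xi\| + \|\xi\|)\big)\,,
\]
so the analytic heart of the argument is already supplied by that corollary. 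What remains is a short rearrangement of this bound together with an extension from $\maK_1$ to the space $H^1(I, L^2_{\lambda}(\RR))$ appearing in the statement.

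Next I would recast the bound in the desired two-term form using the hypotheses $0 \le t \le T$ and $\nu \in (0,1]$. Since $\sqrt{t} \le \sqrt{T}$ and $\nu^{2} \le \nu$, the term $\nu^{2}\sqrt{t}\|\xi\|$ is at most $\sqrt{T}\,\nu\|\xi\|$ and is absorbed, together with $\rho\nu\|\xi\|$, into a constant multiple of $\nu\|\xi\|$; the term $\nu^{2}\sqrt{t}\|\pa_\sigma\xi\|$ is at most $\sqrt{T}\,\nu^{2}\|\pa_\sigma\xi\|$. Factoring out $\nu$ then yields
\[
  \|e^{tL}\xi - e^{tL_0}\xi\| \ \le \ C'\,\nu\big(\|\xi\| + \nu\|\pa_\sigma\xi\|\big)\,,
\]
with $C' = C(\rho + \sqrt{T})$ depending on $T$ and on the fixed model parameters, but not on $\xi$ or $\nu$.

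Finally I would remove the restriction $\xi \in \maK_1$. The estimate above is so far established only on $\maK_1 = H^2_{\lambda}(I \times \RR)$, which is dense in $H^1(I, L^2_{\lambda}(\RR))$, and both sides behave continuously along an approximating sequence: the right-hand side depends only on $\|\xi\|$ and $\|\pa_\sigma\xi\|$, hence is continuous in the $H^1(I, L^2_{\lambda}(\RR))$ norm, while the left-hand side is continuous even in the weaker $L^2_{\lambda}$ topology because $e^{tL}$ and $e^{tL_0}$ are bounded operators on $\maH = L^2_{\lambda}(I \times \RR)$ (Theorem~\ref{thm.main} and Lemma~\ref{lemma.est.tL}). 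Choosing $\xi_n \in \maK_1$ with $\xi_n \to \xi$ in $H^1(I, L^2_{\lambda}(\RR))$ and passing to the limit then gives the estimate for every $\xi \in H^1(I, L^2_{\lambda}(\RR))$. Because Corollary~\ref{cor.est.F'} already carries the substantive content, there is no genuine obstacle here; the only point deserving care is this density step, where one must confirm that the quantities $\|\xi_n\|$, $\|\pa_\sigma\xi_n\|$ and the pair $e^{tL}\xi_n$, $e^{tL_0}\xi_n$ all converge appropriately, so that the inequality survives in the limit.
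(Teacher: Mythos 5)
Your proposal is correct and follows essentially the same route as the paper: the paper likewise obtains $e^{tL}\xi - e^{tL_0}\xi = \int_0^t e^{(t-s)L}Ve^{sL_0}\xi\,ds$ from Lemma~\ref{lemma.cont2}, bounds it via Corollary~\ref{cor.est.F'}, and then extends from $\maK_1$ to $H^1(I,L^2_{\lambda}(\RR))$ by density and continuity of both sides. Your write-up actually spells out the rearrangement (using $t\le T$ and $\nu\le 1$) and the limiting argument more explicitly than the paper's very terse proof does.
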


\begin{proof}
 The statement was proved for $\xi \in \maK_1$. For general $\xi$, it
   follows from the density of $\maK_1 := H^{2}_{\lambda}(I\times
   \RR)$ in $H^1(I, L^2_{\lambda}(\RR))$ and the continuity on $H^1(I,
   L^2_{\lambda}(\RR))$ of all the operators appearing on the left and
right sides of the inequality.
\end{proof}

We close by observing that similar commutator estimates were obtained in
\cite{Wen1, Wen2, CCMN, GHN}. The main difficulty addressed in this work  is
that $L_0$ is not an elliptic operator.

\bibliographystyle{plain}
%\bibliography{sabr4}

\end{document}